\documentclass[leqno,10pt]{article} 
\usepackage{amsmath, amssymb}
\usepackage{amsthm} 
\usepackage{amssymb}
\usepackage{mathrsfs}
\usepackage{esint}
\usepackage{amssymb, url, color, graphicx, amscd, mathrsfs}
\usepackage[colorlinks=true, bookmarks=true, pdfstartview=FitH, pagebackref=true, linktocpage=true, linkcolor = magenta, citecolor = blue]{hyperref}
\usepackage{graphicx}
\usepackage{times}
\usepackage{mathtools} 

\theoremstyle{plain} 
\newtheorem{theorem}{\indent\sc Theorem}[section]
\newtheorem{lemma}[theorem]{\indent\sc Lemma}
\newtheorem{corollary}[theorem]{\indent\sc Corollary}
\newtheorem{proposition}[theorem]{\indent\sc Proposition}

\theoremstyle{definition} 

\newtheorem{remark}[theorem]{\indent\sc Remark}

\numberwithin{equation}{section}
\DeclareMathOperator{\Ric}{Ric}
\DeclareMathOperator{\tr}{trace}

\DeclareMathOperator{\Ent}{Ent}

\makeatletter
\def\address#1#2{\begingroup
\noindent\parbox[t]{7.8cm}{%
\small{\scshape\ignorespaces#1}\par\vskip1ex
\noindent\small{\itshape E-mail address}%
\/: #2\par\vskip4ex}\hfill%
\endgroup}%
\makeatother
\title{{Spectral $\Gamma_2$ inequalities for the $V$-Laplacian with evolving weighted measures}} 
\author{
\textsc{Ha Tuan Dung, Yi Li} 
}
\date{} 
\begin{document}

\maketitle

\footnote{ 
2020 \textit{Mathematics Subject Classification}.
Primary 58J50; Secondary 53C21, 58J35.}
\footnote{ 
\textit{Key words and phrases}. $V$-Laplacian, Tsallis entropy, Shannon entropy, Sobolev inequalities, Bakry-\'Emery tensors.
}
	\begin{abstract}
In this paper, we establish spectral Bakry-\'{E}mery $\Gamma_2$ criteria and their modified counterparts for the $V$-Laplacian on closed Riemannian manifolds equipped with evolving weighted measures. The estimates yield jointly optimized spectral bounds and sharp endpoint constants. When the measures evolve according to the Fokker-Planck equation, these estimates yield monotonicity formulas for the Fisher information associated with Shannon and Tsallis entropies, together with quantitative propagation of Poincaré, logarithmic Sobolev, and nonlinear interpolation inequalities. In the stationary reversible setting, our results specialize to the Witten Laplacian and recover the unweighted Cai-Wang estimates \cite{Cai}.
	\end{abstract}
\section{Introduction}\label{S1}
\subsection{Background}
\quad 

Logarithmic Sobolev inequalities constitute both fundamental analytical tools and important objects of investigation in geometric analysis, probability theory, and the study of diffusion semigroups \cite{Beck, Bob,  Bren,  Gro1, Gro2, Led, Otto, Roth}.
Logarithmic Sobolev inequalities were introduced by Gross \cite[Section 2]{Gro1}, who established their fundamental connection with hypercontractivity. On a closed, connected Riemannian manifold $(M^n,g)$
of dimension $n\geq 2$, one considers the optimal constant $\alpha_M$ such that 
$$\fint_M |\nabla u|^2\,dv
\geq \frac{\alpha_M}{2}\left[\fint_M u^2\log u^2\,dv-\left(\fint_M u^2\,dv\right)
\log\left(\fint_M u^2\,dv\right)
\right]$$
for every positive $u\in \mathcal{C}^\infty(M)$. 
A classical approach to establishing logarithmic Sobolev inequalities is through the Bakry-\'Emery $\Gamma_2$ calculus \cite{Bak1}. Recall that, for the
Laplace-Beltrami operator $\Delta$, the carr\'e du champ operator $\Gamma$ is
defined by
$$
\Gamma(u, v)=\frac{1}{2}[\Delta(u v)-u \Delta v-v \Delta u]=\langle\nabla u, \nabla v\rangle.
$$
while its iterated form $\Gamma_2$ is given by 
$$
\Gamma_2(u, v)=\frac{1}{2}\left[\Delta \Gamma(u, v)-\Gamma(u, \Delta v)-\Gamma(v, \Delta u)\right] .
$$
In particular, by the Bochner-Weitzenb\"ock formula,
$$
\Gamma_2(u):=\Gamma_2(u, u)=\frac{1}{2} \Delta|\nabla u|^2-\langle\nabla u, \nabla \Delta u\rangle=\left|\nabla^2 u\right|^2+\Ric\left(\nabla u, \nabla u\right),
$$
where $\Ric$ is the Ricci tensor of the manifold $(M^n, g)$.
Under the condition
$\Ric\geq (n-1)g,$ the Bakry-\'Emery criterion yields 
$\alpha_M\geq n.$
This estimate is sharp on the standard sphere. The Ricci curvature lower bound, however, does not capture finer spectral information about the manifold. If $\lambda_1 (\Delta)$ denotes the first positive eigenvalue of
$-\Delta$, Rothaus \cite[Theorem 1]{Roth1} obtained the improved lower bound
$\alpha_M\geq\frac{4n}{(n+1)^2}\lambda_1 (\Delta)+\frac{(n-1)^2}{(n+1)^2}n$
under the same Ricci curvature assumption. 
A related problem is to improve the constant directly at the level of
the Bakry-\'Emery $\Gamma_2$ criterion. Let $\Lambda_M$ denote the
largest constant $C$ such that
$$
\int_M h\left[\left|\nabla^2 \log h\right|^2+\Ric(\nabla \log h, \nabla \log h)\right] dv \geq C \int_M h|\nabla \log h|^2 dv
$$
for every positive smooth function  $h$. The Bakry-\'Emery semigroup
argument implies 
$\alpha_M\geq \Lambda_M.$
Thus, a lower bound for $\Lambda_M$ provides a direct route to a
logarithmic Sobolev inequality through the heat flow.

Motivated in part by the role of the $\Gamma_2$ criterion in the study of
the Landau equation by Guillen and Silvestre \cite[Section 9]{Gui},
Ji \cite[Theorem 2.3]{Ji} obtained a quantitative spectral improvement of the
optimal $\Gamma_2$ constant. In particular, under 
$\Ric\geq (n-1)g,$
Ji proved
$\Lambda_M \geq\frac{4n-1}{n(n+2)}\lambda_1 (\Delta)
+\frac{(n-1)^2}{n(n+2)}\,n.$
This result improves the classical Bakry-\'Emery lower bound at the level of the $\Gamma_2$ criterion itself.  More recently, Cai and Wang \cite[Theorem 2]{Cai}
established a one-parameter family of weighted
Bakry-\'Emery $\Gamma_2$ inequalities which recovers
Ji's estimate after a suitable rescaling of the test
functions and passage to the limit $s\longrightarrow-\infty$. More precisely,
under the assumption 
$\Ric\geq (n-1)g,$
they obtained, for an appropriate range of the parameter $s$, lower bounds
for integrals of the form
$\int_M u^s \Gamma_2 (u)\,dv.$
Cai and Wang also established a modified weighted $\Gamma_2$ inequality
involving 
$\Gamma_2 (u)-\frac{|\nabla u|^2}{u}\Delta u.$
The corresponding modified weighted $\Gamma_2$ integral arises in the
second-derivative formula for the Tsallis entropy \cite{Tsal, Tsal1} along the heat flow.
This leads to a differential inequality relating the first and second
derivatives of the Tsallis entropy and, after integration along the heat
flow, to a family of Sobolev inequalities. In this family, the case $q=1$
corresponds to the Poincar\'e inequality, while the limit $q\longrightarrow 2$ yields
a logarithmic Sobolev inequality. Moreover, the resulting constants are sharp on
the model sphere. 
For broader context, Tsallis entropy functionals are also connected
with certain nonlinear Fokker-Planck and porous medium equations \cite{Kan, Plas}. In
particular power-law models, such equations may admit $q$-Gaussian
self similar or stationary solutions and entropy or free energy structures
related to Tsallis entropy \cite{Schaw, SRT}. Related generalized entropy structures also appear from the gradient flow viewpoint in optimal transport. In particular, Kell \cite[Theorem 4.7]{Kell} identified the $q$-heat flow with the gradient flow of a Renyi entropy functional in the $p$-Wasserstein space.

Motivated by the preceding developments, the main objective
of this paper is to establish spectral Bakry-\'{E}mery
$\Gamma_2$ criteria for the $V$-Laplacian in the setting of evolving weighted measures. Let $(M^n,g)$ be a closed, connected Riemannian manifold of dimension $n\geq 2$.  Given an arbitrary smooth vector field $V\in \mathcal{C}^\infty(TM)$, the associated $V$-Laplacian is defined by 
$$\Delta_V:=\Delta+\langle V,\nabla\cdot\rangle,$$
where $\Delta$ denotes the
Laplace-Beltrami operator. The $V$-Laplacian arises naturally at the intersection of Riemannian geometry, diffusion theory, and the analysis of second-order elliptic operators. Its probabilistic background is closely related to Kolmogorov's theory of Markov diffusions and the associated forward and backward equations \cite{Kol}. From an analytical perspective, the $V$-heat equation
$\frac{\partial}{\partial t} u=\Delta_Vu$
provides a setting in which the interaction between curvature, drift, and the behavior of positive solutions can be investigated through gradient estimates and Harnack inequalities. In the absence of drift, the classical estimates of Cheng and Yau \cite{ChengYau} show how lower Ricci curvature bounds control the logarithmic gradient of positive harmonic functions. For the heat equation, Li and Yau \cite{LiYau} obtained differential estimates coupling the spatial gradient of $\log u$ with its time derivative, leading to parabolic Harnack inequalities. Hamilton \cite{Ham1} established complementary gradient estimates for bounded positive heat solutions, relating $|\nabla\log u|^2$ to $\log \frac{A}{u}$, where $A$ is an upper bound for $u$. Together, these results illustrate how curvature controls the spatial variation and temporal behavior of positive solutions. 
For the $V$-Laplacian, the second author \cite{Li1} developed corresponding Cheng-Yau, Li-Yau, and Hamilton-type estimates under suitable Bakry-\'{E}mery Ricci curvature lower bounds. The same work also established local and global upper bounds for the Hessian of bounded positive solutions of the $V$-heat equation on closed manifolds. These developments are closely related to the broader theory of Markov diffusion semigroups, curvature-dimension conditions, and functional inequalities developed by Bakry, Gentil, and Ledoux \cite{Bak2}. 

The Bakry-\'{E}mery $\Gamma_2$ calculus relates
curvature conditions to functional inequalities
for Markov diffusion semigroups \cite{Bak2}.
For the $V$-Laplacian, the relevant curvature tensors
are defined by
$$
\Ric_V:=\Ric-\frac{1}{2} \mathcal{L}_V g, \quad \Ric_V^N:=\Ric_V-\frac{V^{\flat} \otimes V^{\flat}}{N-n},
$$
for any number $N>n$, where $\mathcal L_Vg$ is the Lie derivative of $g$
along $V$, and $V^\flat$ is the metric dual of $V$.
For $u\in\mathcal C^\infty(M)$, the
Bochner-Weitzenb\"ock formula
\cite[Lemma 2.1]{Li1} gives
\begin{align}
	\Gamma_{2,V}(u):=\frac12\Delta_V|\nabla u|^2
	-\left\langle\nabla u,\nabla\Delta_Vu\right\rangle=|\nabla^2u|^2+\Ric_V(\nabla u,\nabla u).
\end{align}
Together with the Hessian trace estimate and the
Cauchy-Schwarz inequality, this yields the
curvature-dimension inequality
\begin{equation}\label{p1.2}
	\Gamma_{2,V}(u)
	\geq
	\frac1N(\Delta_Vu)^2
	+\Ric_V^N(\nabla u,\nabla u).
\end{equation}
If $V$ is Killing, then $\mathcal L_Vg=0$, so
$\Ric_V=\Ric$ and $\Gamma_{2,V}=\Gamma_2$.
Meanwhile, $\Ric_V^N$ still
depends on $V$ through the quadratic term
$-\frac{V^b \otimes V^b}{N-n}.$ 
A nonzero Killing drift also shows that invariance
of the volume measure does not imply self-adjointness
of $\Delta_V$ in $L^2(M,dv)$.
Taking the trace of $\mathcal L_Vg=0$ gives
$\operatorname{div}V=0$.
Since $M$ is closed, the integration by parts yields
$$
\int_M \Delta_V \varphi d v=-\int_M \varphi \operatorname{div} V d v=0
$$
for all $\varphi\in \mathcal{C}^\infty(M).$
Thus, $dv$ is invariant under the diffusion generated
by $\Delta_V$.
The formal adjoint with respect to $dv$, however, is 
$\Delta_V^*=\Delta-\langle V,\nabla\cdot\rangle.$
For $V\not\equiv0$, this differs from $\Delta_V$,
which therefore is not self-adjoint in $L^2(M,dv)$. 
The deformation tensor $\frac12\mathcal L_Vg$ also underlies the geometric heat flow introduced by the second author and Liu \cite{Li2}.
Their flow is the negative $L^2$-gradient flow of
the deformation energy
$X\mapsto\frac14\int_M|\mathcal L_Xg|^2\,dv$,
whose critical points are precisely the Killing
vector fields.

Extending the spectral $\Gamma_2$ estimates of Ji \cite{Ji} and Cai-Wang \cite{Cai} to the setting of a general drift and a prescribed weighted measure involves two difficulties.
The first is that $\Delta_V$ need not be symmetric
with respect to this measure. 
For a smooth positive density $\rho_t$ and
$d\mu_t=\rho_t\,dv$, symmetry in $L^2(M,d\mu_t)$
at a fixed time $t$ is equivalent to
$V=\nabla\log\rho_t$.
In particular, when $V$ is not a gradient vector field,
no smooth positive reference measure makes $\Delta_V$
symmetric.
Consequently, the classical self-adjoint Rayleigh-Ritz
principle cannot be applied directly to $\Delta_V$
in the general setting considered here.
The issue addressed here is to retain its quantitative spectral
refinement for a general drift and a prescribed evolving measure. 
A second arises from the interaction
between the drift and the curvature structure.
Although the Bakry-\'{E}mery Bochner identity
remains valid for arbitrary smooth vector fields, the curvature tensor  $\Ric_V$ contains the deformation tensor
$\frac12\mathcal L_Vg$, whose sign is not
controlled by a lower bound on the ordinary
Ricci curvature alone. Furthermore, the
standard Hessian trace inequality yields
$$
\left|\nabla^2 u\right|^2 \geq \frac{1}{n}\left(\Delta_V u-\langle V, \nabla u\rangle\right)^2.
$$ Expressing the Hessian trace in terms of $\Delta_Vu$ introduces additional drift-dependent terms. The  inequality \eqref{p1.2} encodes these terms through $\Ric_V^N$. For the nonlinear weighted integral estimates considered here, the reference measure also enters through the integration by parts formula and contributes additional terms involving its logarithmic density.

Our approach uses the symmetric diffusion operator associated with each reference measure, which lets us apply the variational characterization of its spectral gap. This gap, together with the logarithmic gradient of the reference density, provides the analytic input for our estimates. The curvature contribution combines the weighted Ricci tensor of this diffusion with the deformation tensor of the relative drift, namely the difference between $V$ and the drift of the symmetric diffusion. These quantities determine the explicit constants in the nonlinear weighted $\Gamma_2$ inequalities and their modified counterparts. For the evolution results, we consider smooth positive probability densities $\rho_t$ and the associated measures $d \mu_t=\rho_t d v$. Their evolution is governed by the forward Fokker-Planck equation associated with $\Delta_V$ 
\begin{align}\label{q1.2}
\partial_t \rho_t=\Delta_V^* \rho_t=\Delta \rho_t-\operatorname{div}\left(\rho_t V\right),
\end{align}
where $\Delta_V^*$ denotes the formal adjoint of $\Delta_V$
with respect to the Riemannian volume measure $dv$.
On the closed manifold $M$, this equation preserves the normalization $\int_M\rho_t\,dv=1$ whenever it holds initially. Coupling this evolution with positive solutions of the backward $V$-heat equation, we obtain differential inequalities
for Shannon and Tsallis entropies. These inequalities, in turn,
yield quantitative propagation of Poincar\'e, logarithmic
Sobolev, and Sobolev interpolation inequalities along the
measure flow.

\subsection{Main results}
\quad We now state the main results of the paper. Throughout this section,
let $(M^n,g)$ be a closed connected Riemannian manifold of dimension
$n\geq2$, let $V\in \mathcal{C}^\infty(TM)$ be a smooth vector field, and consider the $V$-Laplacian 
$\Delta_V:=\Delta+\langle V,\nabla\cdot\rangle.$
Let $\rho_t>0$ be a smooth family of probability densities on
$M\times[0,T]$, and put
\begin{align}\label{k1.3}
d \mu_t:=\rho_t d v, \quad \int_M d \mu_t=1.
\end{align}
The spectral estimates below are statements at a fixed time and do
not require an evolution equation for $\rho_t$. For the entropy and
interpolation results, we shall assume that $\rho_t$ solves the
Fokker-Planck equation \eqref{q1.2}.

For each $t\in [0, T]$, set
$$W_t:=V-\nabla \log \rho_t, \quad A_t:=\Delta+\left\langle \nabla \log \rho_t, \nabla \cdot\right\rangle, \quad S_t:=\frac{1}{2} \mathcal{L}_{W_t} g .
$$
Then, $\Delta_V=A_t+W_t\cdot\nabla$, and $A_t$ is symmetric with
respect to $d\mu_t,$ that is
$$
\int_M \varphi A_t \psi d \mu_t=-\int_M\langle\nabla \varphi, \nabla \psi\rangle d \mu_t, 
$$
for all $\varphi,\psi\in \mathcal{C}^\infty(M)$.
Moreover,  $-A_t$ is a nonnegative self-adjoint operator
on $L^2(M,d\mu_t)$ and 
$$
\Ric_{A_t}:=\Ric-\nabla^2 \log \rho_t, \quad \Ric_V=\Ric_{A_t}-S_t .
$$
Denote by $\lambda_1(t)$ the first positive eigenvalue of  the operator $-A_t$,
and write $H_t:=\|\nabla\log\rho_t\|_{L^\infty(M)}$. 
We next introduce a function which will appear repeatedly in the
spectral constants below. For
$
0<a<\alpha<\frac{n}{n-1}, \lambda>0,$ and $L \geq 0,
$
set
$
d_\alpha:=n-(n-1) \alpha, \chi_{a, \alpha}:=\frac{1}{n(n-1)\left(a^{-1}-\alpha^{-1}\right)},
$
and define
\begin{equation}\label{eq:1.3}
\widehat{\Psi}_{\alpha, a}(\lambda, L):=\min _{\substack{x \geq \sqrt{\lambda} \\ 0 \leq y \leq L}}\left\{\frac{d_\alpha}{n} x^2-\frac{2 \alpha}{n} x y+\left(\frac{\alpha}{n}-\chi_{a, \alpha}\right) y^2\right\} .
\end{equation}
We also introduce
$$
\mathcal{R}_{\alpha, t}:=\alpha \Ric_{A_t}-S_t=\alpha \Ric_V+(\alpha-1) S_t, \quad \mathfrak{r}_\alpha(t):=\inf _{\substack{x \in M, |\xi|_g=1}}\left(\mathcal{R}_{\alpha, t}\right)_x(\xi, \xi),
$$
and put
$
\widehat{\mathcal{C}}_t(a, \alpha):=\widehat{\Psi}_{\alpha, a}\left(\lambda_1(t), H_t\right)+\mathfrak{r}_\alpha(t) .
$

Our first result gives a spectral lower bound for the $V$-Bakry-\'Emery $\Gamma_2$ integral in terms of the spectral gap of $A_t$, the density gradient, and the deformation tensor of $W_t$.

\begin{theorem}\label{main1}
	Let $(M^n,g)$ be a closed, connected Riemannian manifold
	of dimension $n\geq 2$, equipped with a smooth vector field
	$V\in\mathcal{C}^\infty(TM)$ and the family of probability
	measures $(d\mu_t)_{t\in[0,T]}$ defined by
	\eqref{k1.3}.
	For $s \in(-\infty, 0) \cup\left(\frac{4(n+2)}{4 n-1}, \infty\right),$
	set 
	$\alpha_s:=\frac{n(n-1) s}{(n+2)(n s-4)}.$ 
	Then, for every
$\alpha \in\left(\alpha_s, \frac{n}{n-1}\right)$, every $t\in[0,T]$,
	and every positive function $w\in \mathcal{C}^\infty(M)$, we have
	\begin{align}\label{e1.4}
	\int_M w^s\left[\left|\nabla^2 w\right|^2+\Ric_V(\nabla w, \nabla w)\right] d \mu_t \geq \widehat{\mathcal C}_t(\alpha_s,\alpha) \int_M w^s|\nabla w|^2 d \mu_t.
	\end{align}
\end{theorem}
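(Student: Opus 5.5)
The plan is to follow the Cai--Wang scheme, with the self-adjoint operator $A_t$ replacing $\Delta$ and the drift discrepancy $W_t$ kept separate. Fix $t$ and drop it from the notation: $A=A_t$, $\rho=\rho_t$, $d\mu=d\mu_t$, $S=S_t$, $H=H_t$. Since $\Ric_V=\Ric_{A}-S$ and $\mathcal R_\alpha=\alpha\Ric_A-S$, the integrand on the left of \eqref{e1.4} splits as
$$
|\nabla^2 w|^2+\Ric_V(\nabla w,\nabla w)=\alpha\bigl[|\nabla^2 w|^2+\Ric_A(\nabla w,\nabla w)\bigr]+(1-\alpha)|\nabla^2w|^2+\mathcal R_\alpha(\nabla w,\nabla w)-\alpha\Ric_A(\nabla w,\nabla w)+\dots,
$$
where the bookkeeping is arranged so that the only curvature term left uncontrolled is $\mathcal R_\alpha(\nabla w,\nabla w)\ge \mathfrak r_\alpha|\nabla w|^2$. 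The $S$-contribution is absorbed entirely into $\mathcal R_\alpha$, so the remaining work involves only the symmetric operator $A$ and the Hessian. This produces the additive $\mathfrak r_\alpha(t)$ in $\widehat{\mathcal C}_t$.

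For the $A$-part I will use the weighted integrated Bochner formula for $A$ with respect to $d\mu$. Integrating $w^s\bigl(\tfrac12 A|\nabla w|^2-\langle\nabla w,\nabla Aw\rangle\bigr)$ by parts (using the symmetry of $A$) gives
$$
\int_M w^s\bigl[|\nabla^2w|^2+\Ric_A(\nabla w,\nabla w)\bigr]d\mu=\int_M w^s(Aw)^2d\mu+s\int_M w^{s-1}|\nabla w|^2Aw\,d\mu-\frac{s}{2}\int_M w^{s-1}\langle\nabla w,\nabla|\nabla w|^2\rangle d\mu .
$$
The last term equals $-s\int w^{s-1}\nabla^2w(\nabla w,\nabla w)\,d\mu$. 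A second integration by parts of $\int w^{s-2}|\nabla w|^4$-type quantities relates the three mixed terms to one another.

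Next comes the pointwise refined Hessian estimate. With $\nu=\nabla w/|\nabla w|$,
$$
|\nabla^2w|^2\ge \frac{(\Delta w)^2}{n}+\frac{n}{n-1}\Bigl(\nabla^2w(\nu,\nu)-\frac{\Delta w}{n}\Bigr)^2,
$$
where I substitute $\Delta w=Aw-\langle\nabla\log\rho,\nabla w\rangle$. This is where the density gradient enters: the term $\langle\nabla\log\rho,\nabla w\rangle$ is bounded in size by $H|\nabla w|$. I then eliminate $\nabla^2 w(\nu,\nu)$ and the $w^{s-2}|\nabla w|^4$ terms by completing the square, with a free parameter tuned to $a=\alpha_s$. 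The exact value $\alpha_s=\frac{n(n-1)s}{(n+2)(ns-4)}$ is the one for which the leftover quartic coefficient has the right sign. The constraint $\alpha>\alpha_s$ (equivalently $a^{-1}-\alpha^{-1}>0$) makes $\chi_{a,\alpha}$ positive and finite, and $\alpha<n/(n-1)$ makes $d_\alpha>0$. The admissible range of $s$ is exactly the range where $\alpha_s<n/(n-1)$.

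After this step the lower bound has the form
$$
\int w^s\bigl[\tfrac{d_\alpha}{n}(Aw)^2-\tfrac{2\alpha}{n}(Aw)\langle\nabla\log\rho,\nabla w\rangle+(\tfrac{\alpha}{n}-\chi)\langle\nabla\log\rho,\nabla w\rangle^2\bigr]d\mu+\mathfrak r_\alpha\int w^s|\nabla w|^2d\mu .
$$
To reduce to two scalars I normalize $D=\int w^s|\nabla w|^2d\mu$ and set
$$
x=\Bigl(\int w^s(Aw)^2d\mu\big/D\Bigr)^{1/2},\qquad y\,x=\Bigl|\int w^s (Aw)\langle\nabla\log\rho,\nabla w\rangle d\mu\Bigr|\big/D .
$$
By Cauchy--Schwarz, $0\le y\le H$, and the last quadratic term is handled in the same way. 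The bracket is then at least the quadratic form in \eqref{eq:1.3} times $D$. The main obstacle is showing $x\ge\sqrt{\lambda_1}$. Because of the weight $w^s$, this is not a direct consequence of the variational characterization. I would first try rewriting everything in terms of $\varphi=w^{(s+2)/2}$ (or with the weight absorbed via a further integration by parts), and then apply $\int(A\varphi)^2d\mu\ge\lambda_1\int|\nabla\varphi|^2d\mu$, the spectral theorem for the self-adjoint $-A$ on mean-zero functions. This choice must be compatible with the square-completion above, and ensuring that compatibility is where I expect the real difficulty. Once $x\ge\sqrt{\lambda_1}$ and $0\le y\le H$ are in hand, minimizing over this region gives $\widehat\Psi_{\alpha,\alpha_s}(\lambda_1,H)$, and adding $\mathfrak r_\alpha$ yields \eqref{e1.4}.
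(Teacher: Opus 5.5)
There is a genuine gap, and it is the step you flag yourself. With your normalization $x^2=\int_M w^s(A_tw)^2\,d\mu_t\big/\int_M w^s|\nabla w|^2\,d\mu_t$, nothing gives $x\ge\sqrt{\lambda_1(t)}$. The spectral gap, in the form \eqref{e2.6}, compares $\int_M(A_t\varphi)^2\,d\mu_t$ with $\int_M|\nabla\varphi|^2\,d\mu_t$ with no weight, and the weight $w^s$ destroys this. Passing to $u=w^{(s+2)/2}$ after your square-completion does not repair it. With $r=-\frac{s}{s+2}$ one has $w^s(A_tw)^2=(r+1)^2\bigl(A_tu+r\frac{|\nabla u|^2}{u}\bigr)^2$, so your $x$ still mixes $A_tu$ with $|\nabla u|^2/u$ and is not controlled by $\lambda_1(t)$. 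Hence the reduction to $\widehat\Psi_{\alpha,\alpha_s}(\lambda_1(t),H_t)$ is not established.

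Your opening split is also mis-weighted. As written, it leaves an uncontrolled $(1-\alpha)\Ric_{A_t}(\nabla w,\nabla w)$; this is the term hidden in your ``$\dots$''. For $\alpha>1$ it would also put a negative coefficient on the Hessian you want to bound below. The correct split is
$|\nabla^2u|^2+\Ric_V(\nabla u,\nabla u)=(1-\alpha)\bigl[|\nabla^2u|^2+\Ric_{A_t}(\nabla u,\nabla u)\bigr]+\alpha|\nabla^2u|^2+\mathcal R_{\alpha,t}(\nabla u,\nabla u)$.
This is what makes the coefficient of $(A_tu)^2$ equal to $(1-\alpha)+\frac{\alpha}{n}=\frac{d_\alpha}{n}$.

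The substitution you suggest is the right one, but it must be made at the very start, and the square-completion must be rebuilt in the $u$-variables. After substituting, $w^s|\nabla w|^2=(r+1)^2|\nabla u|^2$ and $w^s\Gamma_{2,V}(w)=(r+1)^2\bigl[|\nabla^2u+r\frac{du\otimes du}{u}|^2+\Ric_V(\nabla u,\nabla u)\bigr]$, so the weight disappears. The unweighted identity of Lemma \ref{l2.1} turns the $(1-\alpha)$-part into $(1-\alpha)\int_M X^2\,d\mu_t$ with $X=A_tu$. The bound $\int_M X^2\,d\mu_t\ge\lambda_1(t)\int_M|\nabla u|^2\,d\mu_t$ is then exactly \eqref{e2.6}.

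The remaining $r$-terms are handled by the Cai--Wang-type identity \eqref{l2.15}. It expresses $\int_M\langle\nabla^2u,\frac{du\otimes du}{u}\rangle\,d\mu_t$ through $\int_M|\nabla u|^4/u^2$, $\int_M\langle\widetilde H_tu,Q(u)\rangle$, and $\int_M|\nabla u|^2F/u$, where $F=\langle\nabla\log\rho_t,\nabla u\rangle$. One then completes the square with $Z=\nabla^2u-\frac Xng+\frac{nr}{(n+2)\alpha}Q(u)$ and uses $\operatorname{tr}_gZ=-F$ to get $|Z|^2\ge F^2/n$. Finally, the $F$-term is absorbed into the coefficient $K_{s,\alpha}=\frac{n(n-1)r^2}{(n+2)^2}(\alpha_s^{-1}-\alpha^{-1})$ of $\int_M|\nabla u|^4/u^2$. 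This yields $-\chi_{\alpha_s,\alpha}y^2$ with $y=\|F\|_{L^2(d\mu_t)}/\|\nabla u\|_{L^2(d\mu_t)}\le H_t$.

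Define $y$ this way rather than through the cross term; otherwise the possibly negative coefficient $\frac{\alpha}{n}-\chi_{\alpha_s,\alpha}$ of the $F^2$ term is not bounded in terms of $y$. Note also that positivity of $K_{s,\alpha}$ requires $\alpha_s>0$ as well as $\alpha>\alpha_s$. The admissible $s$ are therefore those with $0<\alpha_s<\frac{n}{n-1}$, not merely $\alpha_s<\frac{n}{n-1}$.
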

\begin{remark}\label{r1.2}
	In the stationary reversible case, we can write
$\rho_t \equiv \frac{e^{-f}}{\int_M e^{-f} d v}, V=-\nabla f.$
	Then, we see that  $d \mu_t \equiv d \nu_f:=\frac{e^{-f} dv}{\int_M e^{-f} d v},$ and 
	$A_t=\Delta_f, S_t=0, \Ric_V=\Ric_f,$
	where
	$\Delta_f:=\Delta-\langle\nabla f,\nabla\cdot\rangle, \Ric_f:=\Ric+\nabla^2f.$ 	Suppose that $\Ric_f\geq\kappa g$ and
	$|\nabla f|\leq L$.
	Since $H_t\leq L$ and
	$\mathfrak r_\alpha(t)\geq\alpha\kappa$,
	Theorem \ref{main1} gives
	\begin{align}\label{e1.5}
	\int_M w^s\left[\left|\nabla^2 w\right|^2+\Ric_f(\nabla w, \nabla w)\right] d \nu_f \geq \widehat{C}_{s, f}(\alpha) \int_M w^s|\nabla w|^2 d \nu_f,
	\end{align}
	where
$\widehat{C}_{s, f}(\alpha):=\widehat{\Psi}_{\alpha, \alpha_s}\left(\lambda_{1, f}, L\right)+\alpha \kappa.$
	Here, $\lambda_{1,f}$ is the first positive eigenvalue of
	$-\Delta_f$. 
	If $f$ is constant, we may take $L=0$, and
$
\widehat{\Psi}_{\alpha, \alpha_s}\left(\lambda_1, 0\right)=\left(1-\frac{n-1}{n} \alpha\right) \lambda_1 .
$
For every $s$ in the interior of the admissible Cai-Wang range , letting $\alpha \longrightarrow \alpha_s$ gives
$$
\int_M w^s\left[\left|\nabla^2 w\right|^2+\Ric(\nabla w, \nabla w)\right] d v \geq\left[\left(1-\frac{n-1}{n} \alpha_s\right) \lambda_1+\alpha_s \kappa\right] \int_M w^s|\nabla w|^2 d v .
$$
Thus, when $\kappa=n-1$, the Cai-Wang estimate \cite[Theorem 2]{Cai} is recovered away from the positive endpoint. The case $s=0$ is obtained separately (see Lemma \ref{l2.1}), while the endpoint $s=\frac{4(n+2)}{4 n-1}$ is recovered directly from the unweighted argument, since all drift terms vanish when $f$ is constant.
\end{remark}
\begin{remark}
The joint minimization in \eqref{eq:1.3} yields
\begin{equation}\label{l1.6}
	\widehat{\Psi}_{\alpha,a}(\lambda,L)
	\geq
	\min_{\substack{x\geq\sqrt{\lambda}\\0\leq y\leq L}}
	\left\{
	\frac{d_\alpha}{n}x^2
	-\frac{2\alpha}{n}xy
	+\frac{\alpha}{n}y^2
	\right\}
	-\chi_{a,\alpha}L^2.
\end{equation}
The right-hand side is obtained by bounding the correction
term $-\chi_{a,\alpha}y^2$ below by $-\chi_{a,\alpha}L^2$.
Proposition  \ref{p1.3} gives an explicit formula for the
minimum and characterizes strict inequality in
\eqref{l1.6}.
Optimizing over $\alpha$ gives the best bound within
this family, without establishing sharpness for the
underlying nonlinear inequality.
\end{remark}

Theorem  \ref{main1} also yields a Shannon-type estimate
through the logarithmic rescaling $w_s=h^{\frac{1}{s}}$
as $s\longrightarrow-\infty$. Specifically, for each $t\in[0,T]$
and every positive function $h\in\mathcal C^\infty(M)$,
\begin{equation}\label{p1.9}
	\int_M h\,\Gamma_{2,V}(\log h)\,d\mu_t
	\geq
	\widehat C_{\mathrm{Sh},V}^*(t)
	\int_M h|\nabla\log h|^2\,d\mu_t,
\end{equation} 
where $\widehat C_{\mathrm{Sh},V}^*(t)
:=
\sup_{\alpha\in\left(\frac{n-1}{n+2}, \frac{n}{n-1}\right)}
\widehat{\mathcal C}_t \left(\frac{n-1}{n+2},\alpha\right).$
To interpret this estimate in terms of entropy,
assume additionally that the densities $\rho_t$
satisfy the equation \eqref{q1.2},
and consider the backward Kolmogorov problem
\begin{equation}\label{eq:1.9}
	\begin{cases}
		\partial_tu+\Delta_Vu=0,
		&\text{on }M\times(0,T),\\
		\quad\quad u(\cdot,T)=u_T,
		&\text{on }M,
	\end{cases}
\end{equation}
where $u_T\in\mathcal C^\infty(M)$ is strictly positive
and $\int_Mu_T\,d\mu_T=1$.
Well-posedness, positivity and mass conservation for the
forward equation are established in the Appendix.

Duality preserves $\int_Mu(\cdot,t)\,d\mu_t=1$.
With all derivatives taken in the forward
Fokker-Planck time $t$, the Shannon entropy relative
to the instantaneous reference measure, 
$$\mathcal S_V(u,t):=-\int_Mu\log u\,d\mu_t,$$
satisfies
$$
\frac{d}{d t} \mathcal{S}_V(u, t)=-\int_M u|\nabla \log u|^2 d \mu_t, \quad \frac{d^2}{d t^2} \mathcal{S}_V(u, t)=-2 \int_M u \Gamma_{2, V}(\log u) d \mu_t.
$$
Consequently, \eqref{p1.9} gives
\begin{align}\label{e1.11}
\frac{d^2}{dt^2}\mathcal S_V(u,t)
-2\widehat C_{\mathrm{Sh},V}^*(t)
\frac{d}{dt}\mathcal S_V(u,t)
\leq0.
\end{align}
We next extend this entropy inequality to the
one-parameter family of Tsallis entropies. For $p\in(0,1)\cup(1,\infty)$, define 
$$T_{p,V}(u,t)
:=\frac{\displaystyle\int_Mu^p\,d\mu_t-1}{1-p},$$
where $\int_Mu\,d\mu_t=1$. Since
$\lim_{p\longrightarrow1}T_{p,V}(u,t)=\mathcal S_V(u,t)$, we set
$T_{1,V}:=\mathcal S_V$.
The second variation of $T_{p,V}$, however, is no longer governed
by the usual $\Gamma_2$ expression.  Setting 
$s:=\frac2{p-1}, U:=u^{\frac{p-1}{2}}=u^{\frac{1}{s}},$
a direct computation gives
$$
\frac{d}{d t} T_{p, V}(u, t)=-p s^2 \int_M U^s|\nabla U|^2 d \mu_t,
$$
and
\begin{align}\label{o1.9}
	\frac{d^2}{dt^2}T_{p,V}(u,t)
	=-2ps^2\int_MU^s\bigl[
	&|\nabla^2U|^2+\Ric_V(\nabla U,\nabla U) -U^{-1}|\nabla U|^2A_tU\bigr]d\mu_t.
\end{align}
Thus, the Tsallis deformation introduces the additional term
$-U^{-1}|\nabla U|^2A_tU$. The operator in this term is the
symmetric operator $A_t$, because the integration by parts is
performed with respect to the measure $d\mu_t$.
This calculation motivates the following modified spectral $\Gamma_2$ estimate.
We now return to arbitrary smooth positive probability densities $\rho_t$. Here, the estimate holds at each fixed time without an evolution assumption.

\begin{theorem}\label{main2}
	Let $(M^n,g)$ be a closed, connected Riemannian manifold
of dimension $n\geq 2$, equipped with a smooth vector field
$V\in\mathcal{C}^\infty(TM)$ and the family of probability
measures $(d\mu_t)_{t\in[0,T]}$ defined by
\eqref{k1.3}. For $s \in\left(-\infty,-\frac{2\left(2 n^2+1\right)}{4 n-1}\right) \cup(2, \infty),$ set $\beta_s:=\frac{(n-1)(s-2)}{(n+2)(s+2)}.$ 
	Then, for every
	$\beta\in \left(\beta_s, \frac{n}{n-1}\right)$, every $t\in[0,T]$, and every
	positive function $w\in \mathcal{C}^\infty(M)$, we have
	\begin{align}\label{e1.10}
		&\int_Mw^s\left[
		|\nabla^2w|^2+\Ric_V(\nabla w,\nabla w)
		-w^{-1}|\nabla w|^2A_tw
		\right]d\mu_t \geq
	\widehat{\mathcal C}_t(\beta_s,\beta)
		\int_Mw^s|\nabla w|^2\,d\mu_t.
	\end{align}
\end{theorem}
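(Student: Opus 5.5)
We follow the Cai--Wang scheme, adapted to the weighted measure $d\mu_t$ and to the splitting $\Delta_V=A_t+W_t\cdot\nabla$. All computations are at fixed $t$. We write $A=A_t$, $\mu=\mu_t$, $\rho=\rho_t$, $W=W_t$, $S=S_t$.

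\textbf{Step 1 (curvature).} By $\Ric_V=\Ric_{A}-S$, the left side of \eqref{e1.10} equals
$$
\int_M w^s\Bigl[|\nabla^2w|^2+\Ric_{A}(\nabla w,\nabla w)-S(\nabla w,\nabla w)-w^{-1}|\nabla w|^2Aw\Bigr]d\mu .
$$

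\textbf{Step 2 (weighted Bochner identity).} Integrate the $A$-Bochner formula
$$
\tfrac12 A|\nabla w|^2=|\nabla^2 w|^2+\langle\nabla w,\nabla Aw\rangle+\Ric_{A}(\nabla w,\nabla w)
$$
against $w^s\,d\mu$, using the symmetry of $A$. This expresses $\int w^s(Aw)^2\,d\mu$ in terms of $\int w^s|\nabla^2w|^2\,d\mu$, $\int w^s\Ric_{A}(\nabla w,\nabla w)\,d\mu$, and the cross terms
$$
\int w^{s-1}|\nabla w|^2Aw\,d\mu,\qquad \int w^{s-1}\nabla^2w(\nabla w,\nabla w)\,d\mu,\qquad \int w^{s-2}|\nabla w|^4\,d\mu .
$$
The full Hessian is decomposed as $|\nabla^2 w|^2\ge |\mathring{\nabla}^2 w|^2+\frac1n(\Delta w)^2$. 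We also use the refined Kato-type inequality along the direction $\nabla w/|\nabla w|$: with $E=\nabla^2w(\nabla w,\nabla w)/|\nabla w|^2$,
$$
|\nabla^2w|^2\ge \frac{(\Delta w)^2}{n}+\frac{n}{n-1}\Bigl(E-\frac{\Delta w}{n}\Bigr)^2 .
$$
Then substitute $\Delta w=Aw-\langle\nabla\log\rho,\nabla w\rangle$. Taking a convex combination, with weight $\beta$ on the full $\Gamma_2$-type term and $1-\beta$ on the remainder, produces the term $\beta\Ric_{A}-S=\mathcal R_{\beta,t}$, bounded below by $\mathfrak r_\beta(t)$. Here $\beta<\frac{n}{n-1}$ is needed so that $d_\beta>0$.

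\textbf{Step 3 (quadratic form and spectral gap).} After this reduction, the remaining integrand is a quadratic form in
$$
X=w^{s/2}Aw,\qquad Y=w^{s/2}|\nabla\log\rho|\,|\nabla w|,\qquad Z=w^{s/2-1}|\nabla w|^2,\qquad w^{s/2}E .
$$
The extra Tsallis term $-w^{-1}|\nabla w|^2Aw$ shifts the coefficient of the $XZ$ cross term, which is why $\alpha_s$ is replaced by $\beta_s$. Complete squares in $E$ and $Z$. Nonnegativity of the resulting $Z$-coefficient requires $\beta>\beta_s$, equivalently $s$ in the stated range. The residual loss is $\chi_{\beta_s,\beta}Y^2$, from a Young inequality whose parameter is fixed by $a=\beta_s$.

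The $X$-terms are then handled by the spectral gap. Apply the Rayleigh quotient of $-A$ on $L^2(\mu)$ to a suitable power of $w$: $\int (Aw)^2w^s\,d\mu$ is compared with $\int w^s|\nabla w|^2\,d\mu$ up to terms already absorbed. This gives normalized quantities $x^2\ge\lambda_1(t)$ for the ratio $\int w^s(Aw)^2/\int w^s|\nabla w|^2$, and $y\le H_t$ for the density term. The lower bound is therefore
$$
\Bigl[\frac{d_\beta}{n}x^2-\frac{2\beta}{n}xy+\Bigl(\frac{\beta}{n}-\chi\Bigr)y^2\Bigr]\int_M w^s|\nabla w|^2\,d\mu ,
$$
where the cross term $-\frac{2\beta}{n}xy$ comes from Cauchy--Schwarz applied to $\int w^s\,Aw\,\langle\nabla\log\rho,\nabla w\rangle\,d\mu$. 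Minimizing over the admissible $(x,y)$ gives $\widehat\Psi_{\beta,\beta_s}(\lambda_1(t),H_t)$, and adding $\mathfrak r_\beta(t)$ yields \eqref{e1.10}.

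\textbf{Main obstacle.} The hard step is the exact algebra in Step 3: showing that the coefficient constraint after completing squares is precisely $\beta>\beta_s$ with $\beta_s=\frac{(n-1)(s-2)}{(n+2)(s+2)}$. This is parallel to $\alpha_s$ in Theorem \ref{main1}, with $s$ shifted by the Tsallis term. A second difficulty is justifying the spectral-gap step when $w^s$ weights the integrals, since the Rayleigh principle applies to mean-zero test functions. We would first try to reduce to Theorem \ref{main1}'s computation: treat the extra term via the integration by parts
$$
\int w^{s-1}|\nabla w|^2Aw\,d\mu=-\int\bigl\langle\nabla(w^{s-1}|\nabla w|^2),\nabla w\bigr\rangle\,d\mu ,
$$
which only modifies the coefficients of $Z^2$ and $Z\cdot w^{s/2}E$. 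The proof of Theorem \ref{main1} then goes through verbatim with $\alpha_s$ replaced by $\beta_s$.
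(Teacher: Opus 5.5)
\textbf{The gap is in Step 3,} where you claim that the ratio $x^2=\int_M w^s(A_tw)^2\,d\mu_t\big/\int_M w^s|\nabla w|^2\,d\mu_t$ is at least $\lambda_1(t)$. The Rayleigh principle only controls unweighted quantities, namely $\int_M(A_th)^2\,d\mu_t\ge\lambda_1(t)\int_M|\nabla h|^2\,d\mu_t$. The operator $A_t$ is not symmetric for $w^s\,d\mu_t$, and the weighted inequality is false in general.

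To see this, take $w=1+\epsilon\phi$ with $-A_t\phi=\lambda_1\phi$. Using $\int_M\phi|\nabla\phi|^2\,d\mu_t=\frac{\lambda_1}{2}\int_M\phi^3\,d\mu_t$, one finds
\[
\int_M w^s(A_tw)^2\,d\mu_t-\lambda_1\int_M w^s|\nabla w|^2\,d\mu_t=\frac{s\lambda_1^2}{2}\,\epsilon^3\int_M\phi^3\,d\mu_t+O(\epsilon^4).
\]
This is negative for a suitable sign of $\epsilon$ whenever $\int_M\phi^3\,d\mu_t\neq0$.

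Your phrase ``apply the Rayleigh quotient to a suitable power of $w$ \dots{} up to terms already absorbed'' does not repair this. The power that works is $u=w^{(s+2)/2}$. With $q=\frac{2}{s+2}$ one has $A_tu=\frac1q w^{s/2}\bigl(A_tw+\frac s2\,w^{-1}|\nabla w|^2\bigr)$. So the quantity the spectral gap controls is $w^{s/2}\bigl(A_tw+\frac s2 w^{-1}|\nabla w|^2\bigr)$, not your $X=w^{s/2}A_tw$. The difference $\frac s2 w^{s/2-1}|\nabla w|^2$ changes (in your notation) the $X$--$Z$, $Z^2$ and $X$--$Y$ coefficients, and hence the constants. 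It is not already absorbed. Likewise, the $xy$ cross term must come from $\int_M A_tu\,\langle\nabla\log\rho_t,\nabla u\rangle\,d\mu_t$, not from $\int_M w^sA_tw\,\langle\nabla\log\rho_t,\nabla w\rangle\,d\mu_t$.

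\textbf{The missing idea is to make this substitution at the very start, as the paper does.} With $w=u^q$ and $r=q-1$, one has $w^s=u^{-2r}$ and $\int_M w^s|\nabla w|^2\,d\mu_t=q^2\int_M|\nabla u|^2\,d\mu_t$. Using $\int_M|\nabla u|^2A_tu/u\,d\mu_t=\int_M|\nabla u|^4/u^2\,d\mu_t-2\int_M\nabla^2u(\nabla u,\nabla u)/u\,d\mu_t$, one gets
\[
\int_M w^s\bigl[\Gamma_{2,V}(w)-w^{-1}|\nabla w|^2A_tw\bigr]\,d\mu_t=q^2\Bigl[\int_M\Gamma_{2,V}(u)\,d\mu_t+\tau\int_M\frac{|\nabla u|^2A_tu}{u}\,d\mu_t\Bigr],\quad \tau=\frac{s-2}{s+2}.
\]
All integrals are now unweighted, and the argument closes as follows.
\begin{itemize}
\item The Bochner identity \eqref{h2.1} and the spectral bound \eqref{e2.6} apply to $u$ directly. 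This gives $x=\|A_tu\|_{L^2(\mu_t)}/\|\nabla u\|_{L^2(\mu_t)}\ge\sqrt{\lambda_1(t)}$.
\item The identity \eqref{e2.14} rewrites the $\tau$-term through $\mathcal A$, $\langle\widetilde H_tu,Q(u)\rangle$ and $D$.
\item Completing the square in $\nabla^2u-\frac{A_tu}{n}g-\frac{n\tau}{(n+2)\beta}Q(u)$ is equivalent to your Kato step. It leaves $\mathcal A$ with coefficient $\frac{n\tau}{n+2}\bigl(1-\beta_s/\beta\bigr)$.
\item That coefficient is positive because $\tau>0$ on the stated $s$-range and $\beta>\beta_s$. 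The Young inequality then yields exactly $\chi_{\beta_s,\beta}$.
\end{itemize}
Your curvature splitting, your Kato inequality and your final minimization over $(x,y)$ are all fine once they are carried out in the variable $u$.
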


\begin{remark}\label{rem:joint-modified-static-main}
	In the stationary reversible case of Remark \ref{r1.2}, Theorem \ref{main2} yields the weighted modified $\Gamma_2$ inequality with $\widehat{C}_{s, f}^{\text{mod}}(\beta):=\widehat{\Psi}_{\beta, \beta_s}\left(\lambda_{1, f}, L\right)+\beta \kappa.$
	If $f$ is constant, then
$
\widehat{\Psi}_{\beta, \beta_s}\left(\lambda_1, 0\right)=\left(1-\frac{n-1}{n} \beta\right) \lambda_1.
$
By passing to the limit $\beta \rightarrow \beta_s$, our estimate reduces to the Cai-Wang bound \cite[Theorem 3]{Cai} when $\rho=n-1$, throughout the interior of the admissible range. The case $s=2$ is treated separately (see Lemma \ref{l2.1}), while the endpoint $s=-\frac{2\left(2 n^2+1\right)}{4 n-1}$ is recovered from the unweighted argument, since all drift terms vanish.
\end{remark}

The two endpoint inequalities have the same sharp constant.
For each $t\in[0,T]$, define
\begin{equation}\label{eq:1.11}
	\ell_t:=
	\inf_{\substack{h\in\mathcal C^\infty(M)\\
			\nabla h\not\equiv0}}\frac{\displaystyle\int_M(A_th)^2\,d\mu_t-\int_M S_t(\nabla h,\nabla h)\,d\mu_t}{\displaystyle\int_M|\nabla h|^2\,d\mu_t}.
\end{equation}
The linear endpoint estimates corresponding to
$s=0$ in Theorem \ref{main1} and $s=2$ in Theorem \ref{main2} share the sharp constant $\ell_t$
defined in \eqref{eq:1.11}.
When $S_t\equiv0$, this constant equals $\lambda_1(t)$.
The endpoint inequalities and their sharpness are
established in Corollary \ref{c2.2}. 

We now turn to the evolution results and assume that
$\rho_t$ satisfies the Fokker-Planck equation
\eqref{q1.2}.
The following theorem gives a differential inequality
for the Tsallis entropy along the coupled forward-backward
evolution.
For $p\neq1,2$, the proof uses Theorem \ref{main2}.
At $p=1$, the result recovers the Shannon inequality
\eqref{e1.11}.
The quadratic endpoint $p=2$ follows from the sharp
estimate \eqref{p1.13}.

\begin{theorem}\label{main3}
Let $(M^n,g)$ be a closed, connected Riemannian manifold
of dimension $n\geq2$, equipped with a smooth vector field
$V\in\mathcal C^\infty(TM)$ and the family of probability
measures $(d\mu_t)_{t\in[0,T]}$ defined by \eqref{k1.3}.
Assume that the densities $\rho_t$ satisfy the
Fokker-Planck equation \eqref{q1.2} and that $u>0$
solves \eqref{eq:1.9} with  
$$\int_M u(\cdot,T)\,d\mu_T=1.$$
For 
$p\in\left(\frac{2(n-1)^2}{2n^2+1},\,2\right],$
define
$$\widehat\Lambda_{p,V}^*(t):=
\begin{cases}
	\ell_t,
	& p=2,\\[5pt]
		\displaystyle
	\sup_{\beta\in\left(\beta_p,\frac{n}{n-1}\right)}
	\widehat{\mathcal C}_t(\beta_p,\beta),
	& p<2,\\[5pt]
\end{cases}
\quad
\beta_p:=\frac{n-1}{n+2}
\left(\frac{2}{p}-1\right)
\quad (p<2).$$
At $p=1$, this coefficient reduces to
$\widehat\Lambda_{1,V}^*(t)
=\widehat C_{\mathrm{Sh},V}^*(t)$, where $\widehat C_{\mathrm{Sh},V}^*(t)
=
\sup_{\alpha\in\left(\frac{n-1}{n+2}, \frac{n}{n-1}\right)}
\widehat{\mathcal C}_t \left(\frac{n-1}{n+2},\alpha\right).$
With the convention $T_{1,V}=\mathcal S_V$,
the Tsallis  entropy $T_{p,V}$ satisfies
\begin{equation}\label{e1.7}
	\frac{d^2}{dt^2}T_{p,V}(u,t)
	-2\widehat\Lambda_{p,V}^*(t)
	\frac{d}{dt}T_{p,V}(u,t)
	\leq0, 	\quad t\in(0,T).
\end{equation}
\end{theorem}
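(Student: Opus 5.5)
The plan is to split into the three regimes $p\in(\frac{2(n-1)^2}{2n^2+1},1)\cup(1,2)$, $p=1$ and $p=2$. In each regime the first-derivative formula for $T_{p,V}$ is combined with the second-variation formula \eqref{o1.9}, or its Shannon analogue; both are taken as given from the preceding computation. Throughout I use $s=\frac{2}{p-1}$ and $U=u^{1/s}$. Since $u$ solves \eqref{eq:1.9} and $\rho_t$ solves \eqref{q1.2}, the function $U(\cdot,t)$ is smooth and strictly positive for each $t\in(0,T)$; positivity comes from the maximum principle for the backward equation, or equivalently from the Appendix by duality. Hence each fixed-time inequality may be applied to $w=U(\cdot,t)$.

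\textbf{Case $p\neq1,2$.} I first check that the parameter ranges match. The map $p\mapsto s=\frac{2}{p-1}$ sends $(1,2)$ onto $(2,\infty)$ and $(\frac{2(n-1)^2}{2n^2+1},1)$ onto $(-\infty,s_*)$. Here $s_*=\frac{2}{p_*-1}$ with $p_*-1=\frac{2(n-1)^2-2n^2-1}{2n^2+1}=-\frac{4n-1}{2n^2+1}$, so $s_*=-\frac{2(2n^2+1)}{4n-1}$. These are exactly the admissible $s$ of Theorem \ref{main2}. Next, $\frac{s-2}{s+2}=\frac{2/(p-1)-2}{2/(p-1)+2}=\frac{2-p}{p}=\frac2p-1$, so $\beta_s=\beta_p$.

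Applying \eqref{e1.10} to $w=U$ gives, for every $\beta\in(\beta_p,\frac n{n-1})$,
\[
\frac{d^2}{dt^2}T_{p,V}\le -2ps^2\,\widehat{\mathcal C}_t(\beta_p,\beta)\int_M U^s|\nabla U|^2d\mu_t=2\widehat{\mathcal C}_t(\beta_p,\beta)\frac{d}{dt}T_{p,V}.
\]
Here I use $p>0$, so that $-2ps^2<0$ preserves the direction of the inequality. Now $\frac{d}{dt}T_{p,V}\le0$, so I may take the supremum over $\beta$ on the right-hand side. If $\widehat\Lambda^*_{p,V}(t)=+\infty$ this is harmless: then $\frac{d}{dt}T_{p,V}=0$ forces $\nabla U\equiv0$, and both sides vanish. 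This yields \eqref{e1.7}.

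\textbf{Case $p=1$.} This is exactly \eqref{e1.11}, which follows from \eqref{p1.9} together with the Shannon derivative formulas. Note that $\beta_1=\frac{n-1}{n+2}$, so $\widehat\Lambda^*_{1,V}=\widehat C^*_{\mathrm{Sh},V}$ by definition.

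\textbf{Case $p=2$.} Here $s=2$ and $U=u^{1/2}$. The modified integrand at $s=2$ must be rewritten in terms of $u=U^2$. I expect $\int_M U^2[\Gamma_{2,V}(U)-U^{-1}|\nabla U|^2A_tU]\,d\mu_t$ to equal $\frac14\int_M[(A_tu)^2-S_t(\nabla u,\nabla u)]\,d\mu_t$. One can see this directly, since $T_{2,V}=\int u^2d\mu_t-1$. Using the forward/backward duality, one gets $\frac{d}{dt}\int u^2d\mu_t=-2\int|\nabla u|^2d\mu_t$.

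Differentiating again uses $\partial_t u=-\Delta_V u=-A_tu-W_t\cdot\nabla u$ and $\partial_t\rho_t=\Delta_V^*\rho_t$. The $A_t$ part gives $4\int(A_tu)^2d\mu_t$. The $W_t$ part produces $-4\int S_t(\nabla u,\nabla u)\,d\mu_t$, after integrating by parts and using $\operatorname{div}_{\mu_t}W_t$, which is what $\partial_t\rho_t$ supplies. This is the Lemma \ref{l2.1}/Corollary \ref{c2.2} endpoint computation, i.e.\ the sharp estimate \eqref{p1.13}, which I take as available. The definition \eqref{eq:1.11} of $\ell_t$ then gives $\frac{d^2}{dt^2}T_{2,V}\le-4\ell_t\int|\nabla u|^2d\mu_t=2\ell_t\frac{d}{dt}T_{2,V}$.

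The main obstacle is this $p=2$ bookkeeping. The time derivative of the measure must exactly cancel the asymmetric drift contributions, leaving only the deformation tensor $S_t$. I would verify it by writing $\int_M \varphi\,W_t\cdot\nabla\psi\,d\mu_t$ via the divergence identity $\operatorname{div}(\rho_tW_t)=\operatorname{div}(\rho_tV)-\Delta\rho_t=-\partial_t\rho_t$. This identity is precisely what makes the drift terms from $\partial_tu$ and from $\partial_t\rho_t$ combine into $-2\int S_t(\nabla u,\nabla u)$ per derivative.
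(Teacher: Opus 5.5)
Your case split matches the paper's proof, and so does the closing step in each case:
\begin{itemize}
\item for $p\neq1,2$: the range check $s=\frac{2}{p-1}$ with $\beta_s=\beta_p$, Theorem~\ref{main2} applied to $U$, and a supremum over $\beta$ using $\frac{d}{dt}T_{p,V}\le0$;
\item Corollary~\ref{c4.1} at $p=1$;
\item $\ell_t$ at $p=2$.
\end{itemize}

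\textbf{The gap.} You take as given the first-derivative formula, the second-variation formula \eqref{o1.9} and the Shannon derivative formulas. In the paper these are only announced in the introduction. They are actually established inside this proof, and they are its real content. For $p\neq1,2$, the Fokker--Planck hypothesis \eqref{q1.2} enters your argument only through them. They are also what produces the symmetric operator $A_t$, rather than $\Delta_V$, in the correction term. As written, you have only shown that \eqref{o1.9} together with Theorem~\ref{main2} implies \eqref{e1.7}.

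What you need to supply is the following.
\begin{itemize}
\item The duality identity
\[
\frac{d}{dt}\int_M\psi\,d\mu_t=\int_M(\partial_t\psi+\Delta_V\psi)\,d\mu_t ,
\]
which follows from $\partial_t\rho_t=-\operatorname{div}(\rho_tW_t)$. Apply it first to $\psi=u^p$, and then to $\psi=uG$ with $G=|\nabla U|^2$.
\item The equation $\partial_tU=-\Delta_VU-(s-1)U^{-1}G$, combined with the Bochner formula.
\item The integration by parts
\[
\int_MU^{s-1}G\,A_tU\,d\mu_t=-\int_MU^{s-1}\langle\nabla G,\nabla U\rangle\,d\mu_t-(s-1)\int_MU^{s-2}G^2\,d\mu_t ,
\]
which converts the leftover terms into $-U^{-1}|\nabla U|^2A_tU$.
\item The analogous Shannon computation, $(\partial_t+\Delta_V)\bigl(u|\nabla\log u|^2\bigr)=2u\,\Gamma_{2,V}(\log u)$.
\end{itemize}

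\textbf{The case $p=2$.} You make two sign slips that cancel each other. In fact $T_{2,V}(u,t)=1-\int_Mu^2\,d\mu_t$, and along the coupled evolution $\frac{d}{dt}\int_Mu^2\,d\mu_t=+2\int_M|\nabla u|^2\,d\mu_t$. Once these are fixed, your direct computation
\[
\frac{d}{dt}\int_M|\nabla u|^2\,d\mu_t=2\int_M(A_tu)^2\,d\mu_t-2\int_MS_t(\nabla u,\nabla u)\,d\mu_t
\]
is correct. It is a mild improvement on the paper's route: it gives the $p=2$ inequality straight from the definition \eqref{eq:1.11} of $\ell_t$, with no Bochner formula. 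The paper instead uses $(\partial_t+\Delta_V)|\nabla u|^2=2\Gamma_{2,V}(u)$ and then \eqref{p1.13}, which passes back through Lemma~\ref{l2.1}.

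Note also that the $p=2$ step you flagged as the main obstacle is the easiest instance of the mechanism. The same duality identity is the template for the general-$p$ second variation, which is the computation you cited rather than carried out.
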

\begin{remark}\label{rem:joint-time-orientation-main}
The sign in \eqref{e1.7} reflects the backward-time convention.	If $\tau=T-t$ and
	$$\widehat T_{p,V}(\tau):=T_{p,V}(u,T-\tau),$$ then
	\eqref{e1.7} becomes
	$$\frac{d^2}{d\tau^2}\widehat T_{p,V}(\tau)
	+2\widehat\Lambda_{p,V}^*(T-\tau)
	\frac{d}{d\tau}\widehat T_{p,V}(\tau)\leq0.$$
	In the stationary reversible case, this is the corresponding
	inequality along the forward Witten heat flow
	$\partial_\tau u=\Delta_fu.$
	The coefficient $p=2$ is $\lambda_{1,f}$ in that case, whereas
	for a general moving measure its exact value is $\ell_t$.
\end{remark}

The preceding entropy estimate gives a quantitative
propagation law for interpolation inequalities along
the Fokker-Planck evolution.
For $q\geq1$, a probability measure $\mu$, and a positive
smooth function $w$, define
\begin{equation}
\mathcal B_{q,\mu}(w):=
\begin{cases}
	\displaystyle
	\frac{\left(	\displaystyle\int_Mw^q\,d\mu\right)^{\frac{2}{q}}
		-	\displaystyle\int_Mw^2\,d\mu}{q-2},
	&q\neq2,\\
	\displaystyle\frac12\Ent_\mu(w^2),
	&q=2,
\end{cases}
\end{equation}
where 
$\Ent_\mu(w^2):=
\int_M w^2
\log\left(
\frac{w^2}{\int_M w^2\,d\mu}
\right)\,d\mu.$
\begin{corollary}\label{coro1.8}
	Let $(M^n,g)$ be a closed, connected Riemannian manifold
	of dimension $n\geq2$, equipped with a smooth vector field
	$V\in\mathcal C^\infty(TM)$ and the family of probability
	measures $(d\mu_t)_{t\in[0,T]}$ defined by \eqref{k1.3}.
	Assume that the densities $\rho_t$ satisfy the
	Fokker-Planck equation \eqref{q1.2}. 
	For 
	$q\in \left[1,\frac{2n^2+1}{(n-1)^2}\right),$
	set
	$$b_q:=\frac{(n-1)(q-1)}{n+2},
	\quad
	\widehat{\mathcal D}_{q,V}^*(t):=
	\begin{cases}
		\ell_t,
		& q=1,\\[3pt]
		\displaystyle
		\sup_{\beta\in\left(b_q,\frac{n}{n-1}\right)}
		\widehat{\mathcal C}_t(b_q,\beta),
		& 1<q<\frac{2n^2+1}{(n-1)^2}.
	\end{cases}$$
	Suppose that, for some $c_{q,0}>0$,
	\begin{equation}\label{k1.16}
		\int_M|\nabla w|^2\,d\mu_0
		\geq c_{q,0}\mathcal B_{q,\mu_0}(w)
	\end{equation}
	holds for all positive functions $w\in\mathcal C^\infty(M)$.
	Define $c_{q,T}$ by
	\begin{equation}\label{q1.19}
		\begin{aligned}
			c_{q,T}^{-1}
			={}&
			2\int_0^T
			\exp\left(
			-2\int_t^T\widehat{\mathcal D}_{q,V}^*(r)\,dr
			\right)\,dt+
			c_{q,0}^{-1}
			\exp\left(
			-2\int_0^T\widehat{\mathcal D}_{q,V}^*(r)\,dr
			\right).
		\end{aligned}
	\end{equation}
	Then,
	\begin{equation}\label{e1.8}
		\int_M|\nabla w|^2\,d\mu_T
		\geq c_{q,T}\mathcal B_{q,\mu_T}(w)
	\end{equation}
	for every positive function $w\in\mathcal C^\infty(M)$.
	In particular, $c_{q,T}>0$ for every finite $T$.
\end{corollary}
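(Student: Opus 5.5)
Given a positive $w$, I would run the coupled forward-backward evolution from $\mu_T$ back to $\mu_0$ and apply Theorem \ref{main3} with $p=2/q$. The choice $p=2/q$ gives $\beta_p=\frac{n-1}{n+2}(q-1)=b_q$ and $\widehat\Lambda^*_{p,V}=\widehat{\mathcal D}^*_{q,V}$. The range $q\in[1,\frac{2n^2+1}{(n-1)^2})$ corresponds to $p\in(\frac{2(n-1)^2}{2n^2+1},2]$, with $q=1\leftrightarrow p=2$ and $q=2\leftrightarrow p=1$.

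By homogeneity of both sides of \eqref{e1.8} under $w\mapsto cw$ (both are $2$-homogeneous), I normalize $\int_M w^q\,d\mu_T=1$. I set $u_T:=w^q$ and let $u$ solve \eqref{eq:1.9}. Duality gives $\int_M u\,d\mu_t=1$, and the Appendix gives positivity. Writing $U=u^{1/s}$ with $s=\frac{2}{p-1}$, I would compute $T_{p,V}$ and its derivative in terms of $W_t:=u^{p/2}=u^{1/q}$:
\[
T_{p,V}(u,t)=\frac{\int_M W_t^2\,d\mu_t-1}{1-p},\qquad
\frac{d}{dt}T_{p,V}=-ps^2\int_M U^s|\nabla U|^2\,d\mu_t=-\frac{4}{p}\int_M|\nabla W_t|^2\,d\mu_t .
\]
Here $U^s|\nabla U|^2=s^{-2}u^{p-2}|\nabla u|^2$, and $|\nabla u^{p/2}|^2=\frac{p^2}{4}u^{p-2}|\nabla u|^2$. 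With $\frac{1}{1-p}=\frac{q}{q-2}$, this gives
\[
\mathcal B_{q,\mu_t}(W_t)=\frac{1-\int_M W_t^2\,d\mu_t}{q-2}=\frac{p}{2}\,T_{p,V}(u,t)\cdot\frac{1}{q}\cdot q .
\]
I would check these constants and the sign conventions carefully. After this check, the relation should take the form $\mathcal B_{q,\mu_t}(W_t)=\frac{p}{2}\cdot$(Tsallis deficit), up to an affine normalization. The cases $q=2$ (Shannon/entropy) and $q=1$ ($p=2$, variance) are handled by the same formulas in the limit or directly.

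Next I set $F(t):=-\frac{d}{dt}T_{p,V}\geq0$, which is a multiple of the Dirichlet energy, and $G(t)$ for the corresponding multiple of $\mathcal B_{q,\mu_t}(W_t)$, so that $G'=-F$ up to that constant. Inequality \eqref{e1.7} reads $F'\geq 2\widehat{\mathcal D}^*_{q,V}F$. Integrating this gives
\[
F(t)\leq F(T)\exp\Bigl(-2\int_t^T\widehat{\mathcal D}^*(r)\,dr\Bigr).
\]
Then I write $G(T)=G(0)+\int_0^T F\,dt$, with the constants matched so that the factor $2$ in \eqref{q1.19} appears. I bound $G(0)\leq c_{q,0}^{-1}F(0)$ by hypothesis \eqref{k1.16}, applied to the positive smooth function $W_0$, and then bound $F(0)\leq F(T)e^{-2\int_0^T\widehat{\mathcal D}^*}$. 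Summing gives $G(T)\leq c_{q,T}^{-1}F(T)$. At $t=T$ we have $W_T=w$, so this is exactly \eqref{e1.8}. Positivity of $c_{q,T}$ is clear because the integrand in \eqref{q1.19} is finite: $\widehat{\mathcal D}^*$ is bounded below on $[0,T]$ by continuity and compactness, since $\lambda_1(t)$, $H_t$ and $\mathfrak r_\alpha(t)$ depend continuously on $t$.

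\textbf{Main obstacle.} I expect the main difficulty to be the exact constant bookkeeping. This means matching $\frac{d}{dt}T_{p,V}$ and $T_{p,V}$ to $\int|\nabla W|^2$ and $\mathcal B_q$ with the precise factor that produces the $2$ in \eqref{q1.19}, and verifying the time orientation so that the exponential weights $e^{-2\int_t^T}$ come out in the stated direction. A secondary issue is justifying the endpoint cases $q=1,2$, where I would use the Shannon and $\ell_t$ versions of \eqref{e1.7} directly rather than limits. The regularity needed to differentiate under the integral and to integrate $\widehat{\mathcal D}^*$ (measurability of $t\mapsto\widehat{\mathcal D}^*(t)$, as a supremum of continuous functions and hence lower semicontinuous) should be routine.
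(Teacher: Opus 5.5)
Your proposal is correct and is essentially the paper's proof: you normalize by $2$-homogeneity (the paper divides by $Z_T=\int_M w^q\,d\mu_T$ instead), solve \eqref{eq:1.9} from $u_T=w^q$, integrate $I_p'\geq 2\widehat{\mathcal D}^*_{q,V}I_p$ from Theorem \ref{main3} with $p=2/q$ to get $I_p(t)\leq e^{-2\int_t^T\widehat{\mathcal D}^*_{q,V}}I_p(T)$, apply \eqref{k1.16} to $u(\cdot,0)^{1/q}$, and integrate $\mathcal E_p'=I_p$ with $\mathcal E_p=-T_{p,V}$. On the bookkeeping you flagged: your displayed identity has a sign error, since in fact $\mathcal B_{q,\mu_t}(W_t)=-T_{p,V}(u,t)/q$; with $G:=-T_{p,V}$ and $F:=2q\int_M|\nabla W_t|^2\,d\mu_t$ you get $G'=+F$ and $G(0)\leq F(0)/(2c_{q,0})$, hence $G(T)\leq F(T)/(2c_{q,T})$, which is \eqref{e1.8}; at $q=1$ the coefficient $\ell_t$ is an infimum, hence upper semicontinuous, and is bounded below by $\lambda_1(t)-\sup_M|S_t|$, while the paper also bounds $\widehat{\mathcal D}^*_{q,V}$ from above by testing a fixed nonconstant function, which is what guarantees $c_{q,T}<\infty$.
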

\begin{remark}
Set $p=\frac{2}{q}$ and solve \eqref{eq:1.9}
with terminal datum 
$u_T=\frac{w^q}{\int_M w^q\,d\mu_T}.$
With this normalization, the entropy bound
$I_p(T)\geq2c[-T_{p,V}(u,T)]$ becomes
$$\int_M|\nabla w|^2\,d\mu_T
\geq c\,\mathcal B_{q,\mu_T}(w),$$
where $T_{1,V}=\mathcal S_V$.
The initial entropy bound follows from
\eqref{k1.16} applied to
$u(\cdot,0)^{\frac{1}{q}}$. 
At $q=1$, choosing $c_{1,0}=\lambda_1(0)$ yields
the Poincar\'e bound $\lambda_1(T)\geq c_{1,T}$.
At $q=2$, we have
$\widehat{\mathcal D}_{2,V}^*
=\widehat C_{\mathrm{Sh},V}^*$,
and the Shannon estimate gives
\begin{equation}\label{eq:one-time-LSI-main}
	\Ent_{\mu_T}(w^2)
	\leq\frac{2}{c_{2,T}}
	\int_M|\nabla w|^2\,d\mu_T.
\end{equation}
\end{remark}

For the Witten Laplacian with its invariant probability
measure, the preceding entropy estimates yield the
following weighted interpolation inequalities.
\begin{corollary}\label{c1.10}
Let $(M^n,g)$ be a closed, connected Riemannian manifold
of dimension $n\geq2$, equipped with the probability measure 
$d\nu_f:=\frac{e^{-f}\,dv}{\int_Me^{-f}\,dv}, f\in\mathcal C^\infty(M).$
Assume that 
$$\Ric_f:=\Ric+\nabla^2f\geq\kappa g,
\quad
|\nabla f|\leq L$$
on $M$, for $\kappa\in\mathbb R$ and $L\geq0$.
	For $q\in \left(1,\frac{2n^2+1}{(n-1)^2}\right)$, set 
	$\widehat{\mathcal D}_{q,f}^*:=\sup_{\beta\in \left(b_q,\, \frac{n}{n-1}\right)}
	\left\{
	\widehat\Psi_{\beta,b_q}(\lambda_{1,f},L)
	+\beta\kappa
	\right\}.$
	Then, for $q\neq2$ and every positive function $w\in \mathcal{C}^\infty(M)$,
	\begin{align}\label{p1.22}
		\int_M|\nabla w|^2\,d\nu_f
		\geq\frac{\widehat{\mathcal D}_{q,f}^*}{q-2}
		\left[
		\left(\int_Mw^q\,d\nu_f\right)^{\frac{2}{q}}
		-\int_Mw^2\,d\nu_f
		\right].
	\end{align}
	At $q=1$, the corresponding inequality is the sharp weighted
	Poincar\'e inequality
	\begin{equation}\label{e1.6}
		\int_M|\nabla w|^2\,d\nu_f
		\geq\lambda_{1,f}\left[
		\int_Mw^2\,d\nu_f-
		\left(\int_Mw\,d\nu_f\right)^2
		\right].
	\end{equation}
	If $\widehat{\mathcal D}_{2,f}^*>0$, then
	\begin{equation}\label{q1.23}
		\Ent_{\nu_f}(w^2)
		\leq\frac{2}{\widehat{\mathcal D}_{2,f}^*}
		\int_M|\nabla w|^2\,d\nu_f.
	\end{equation}
\end{corollary}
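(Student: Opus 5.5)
The plan is to specialize Corollary \ref{coro1.8} to the stationary reversible case, where the coefficients do not depend on time, and then let $T\to\infty$.

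\textbf{Reduction to the stationary case.} Take $V=-\nabla f$ and $\rho_t\equiv e^{-f}/\int_M e^{-f}\,dv$. Then $\Delta_V^*\rho=\Delta\rho+\operatorname{div}(\rho\nabla f)=0$, so $\rho_t$ solves \eqref{q1.2} and $d\mu_t\equiv d\nu_f$. As in Remark \ref{r1.2}, we have $W_t=0$, $S_t=0$, $A_t=\Delta_f$ and $\lambda_1(t)=\lambda_{1,f}$. Since $H_t\le L$ and $\mathfrak r_\beta(t)\ge\beta\kappa$, and since $\widehat\Psi_{\beta,a}(\lambda,\cdot)$ is nonincreasing in $L$ (it is a minimum over the larger set $\{0\le y\le L\}$), we get $\widehat{\mathcal C}_t(b_q,\beta)\ge \widehat\Psi_{\beta,b_q}(\lambda_{1,f},L)+\beta\kappa$. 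Taking the supremum over $\beta$ gives $\widehat{\mathcal D}^*_{q,V}(t)\ge \widehat{\mathcal D}^*_{q,f}=:D$ for $1<q$. At $q=1$, $S_t=0$ gives $\ell_t=\lambda_{1,f}$.

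\textbf{Letting $T\to\infty$.} Suppose first $D>0$. The Poincar\'e inequality with constant $\lambda_{1,f}>0$ holds for $d\nu_f$, and it implies some interpolation inequality \eqref{k1.16} at time $0$ with a positive constant $c_{q,0}$. For $q<2$ this follows from the Beckner/Latała–Oleszkiewicz type bound $\mathcal B_q\le C\,\mathrm{Var}$; for $q\ge2$ one can use a logarithmic Sobolev inequality, which exists on a closed manifold with smooth positive density. Only positivity of $c_{q,0}$ matters. Apply Corollary \ref{coro1.8} on $[0,T]$, with the coefficient in \eqref{q1.19} replaced by the constant lower bound $D$. This replacement is legitimate: the entropy inequality \eqref{e1.7} still holds when $\widehat\Lambda^*$ is replaced by a smaller function, because $\frac{d}{dt}T_{p,V}\le0$. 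We then obtain
$$c_{q,T}^{-1}=\frac{1-e^{-2DT}}{D}+c_{q,0}^{-1}e^{-2DT}\longrightarrow D^{-1}.$$
Since $\mu_T=\nu_f$ for every $T$, we get $\int_M|\nabla w|^2\,d\nu_f\ge c_{q,T}\mathcal B_{q,\nu_f}(w)$ for all $T$, and letting $T\to\infty$ yields \eqref{p1.22} with constant $D$. The $q=2$ case is identical and gives \eqref{q1.23}. At $q=1$, \eqref{e1.6} is exactly the variational characterization of $\lambda_{1,f}$ for the self-adjoint operator $-\Delta_f$ on $L^2(d\nu_f)$, because $\mathcal B_{1,\nu_f}(w)=\int w^2-(\int w)^2$.

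\textbf{Nonpositive $D$.} If $D\le0$ and $q\ne2$, the right-hand side of \eqref{p1.22} is nonpositive. Indeed, $\mathcal B_q\ge0$ for all $q$: for $q<2$ Jensen gives $\|w\|_q\le\|w\|_2$, so numerator and denominator are both $\le 0$, and for $q>2$ both are $\ge0$. Hence \eqref{p1.22} is trivial in that case.

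\textbf{Main obstacle.} The delicate point is justifying the $T\to\infty$ limit. This requires a positive $c_{q,0}$ for each $q$ in the range, together with the monotonicity needed to replace $\widehat{\mathcal D}^*$ by its stationary lower bound. If the external interpolation inequality is unavailable, I would instead argue directly via the entropy ODE: integrate \eqref{e1.7} forward in $\tau$ along the Witten heat flow, using $u\to1$ and $\frac{d}{d\tau}\widehat T\to0$ as $\tau\to\infty$ (by the spectral gap). This gives $-\frac{d}{d\tau}\widehat T\ge 2D(\widehat T(\infty)-\widehat T)$ at $\tau=0$, which is exactly the desired inequality with no initial-time constant.
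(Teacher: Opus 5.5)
\textbf{Gap in the primary route.} The step that supplies $c_{q,0}>0$ in \eqref{k1.16} for $2<q<\frac{2n^2+1}{(n-1)^2}$ fails as you justify it.

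\begin{itemize}
\item A logarithmic Sobolev inequality cannot give this. It controls only $\mathcal B_{2,\nu_f}(w)=\frac12\Ent_{\nu_f}(w^2)$. In general it implies no bound $\|w\|_q^2-\|w\|_2^2\le C\int_M|\nabla w|^2$ with $q>2$. For example, the standard Gaussian measure on $\mathbb R$ satisfies an LSI, yet $e^{ax^2/2}$ with $\frac1q\le a<\frac12$ lies in its $H^1$ but not in its $L^q$. The implication actually runs the other way: a tight $q>2$ inequality implies an LSI.
\item On a closed manifold the inequality you need is true, but only through the Sobolev embedding $H^1(M)\hookrightarrow L^q(M)$. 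This embedding is available because $\frac{2n^2+1}{(n-1)^2}<\frac{2n}{n-2}$ for $n\ge3$.
\item You would then still need a tightening argument, using the Poincar\'e inequality, to make the coefficient of $\int_M w^2\,d\nu_f$ equal to $1$. That is substantial external input which you neither state nor prove.
\end{itemize}

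For $1<q\le2$ your route does work. For $q<2$ one even gets $c_{q,0}=(2-q)\lambda_{1,f}$ from $\|w\|_{L^q}\ge\|w\|_{L^1}$ and the Poincar\'e inequality. Your replacement of $\widehat{\mathcal D}^*_{q,V}$ by the constant $D$ in \eqref{q1.19} is also legitimate. The paper mentions this $T\to\infty$ limit only in a remark after its proof.

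\textbf{The paper's proof is your fallback.} It needs no initial constant. Assuming $D>0$, it proceeds as follows.
\begin{itemize}
\item Solve $\partial_\tau h=\Delta_fh$ with $h(\cdot,0)=w^q/\int_Mw^q\,d\nu_f$.
\item Apply Theorem \ref{main3} to $u_R(x,t):=h(x,R-t)$ for arbitrary $R>0$. This gives $\mathcal E_p'=-I_p$ and $I_p(\tau)\le e^{-2D\tau}I_p(0)$. Here $\mathcal E_p(\tau)=-\widehat T_{p,V}(\tau)$ is minus the Tsallis entropy of $h(\cdot,\tau)$, and $I_p=p\int_Mh^{p-2}|\nabla h|^2\,d\nu_f$.
\item Since $\mathcal E_p(\tau)\to0$, integrating over $[0,\infty)$ gives $\mathcal E_p(0)\le I_p(0)/(2D)$.
\item The normalization identities from the proof of Corollary \ref{coro1.8} turn this into \eqref{p1.22}, respectively \eqref{q1.23}.
\end{itemize}

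You should promote this to your main argument, but fix the sign. In forward $\tau$ one has $\frac{d}{d\tau}\widehat T_{p,V}=I_p\ge0$. Integrating $\widehat T_{p,V}''+2D\,\widehat T_{p,V}'\le0$ over $[0,\infty)$ therefore yields $\frac{d}{d\tau}\widehat T_{p,V}(0)\ge2D\bigl(\widehat T_{p,V}(\infty)-\widehat T_{p,V}(0)\bigr)$, with no minus sign on the left. As you wrote it, the left side is nonpositive and the right side nonnegative, so the inequality would force both to vanish.

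Finally, to see that \eqref{e1.6} is sharp among \emph{positive} functions, test it with $K+\psi$. Here $\psi$ is a first eigenfunction of $-\Delta_f$ and $K>\|\psi\|_{L^\infty(M)}$.
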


The paper is organized as follows.
Section  \ref{s2} establishes the integral identities
and preliminary estimates used in the $\Gamma_{2,V}$
analysis. In Section  \ref{s3}, we prove the spectral
and modified spectral $\Gamma_2$ inequalities for the
$V$-Laplacian stated in Theorems  \ref{main1} and  \ref{main2}.
Section \ref{s4} develops the Shannon entropy estimates
and proves the Tsallis entropy inequality in
Theorem \ref{main3}. It then derives the interpolation
inequalities of Corollary \ref{coro1.8} and their
logarithmic Sobolev and stationary Witten consequences.
For completeness, the Appendix shows that, for every
smooth positive initial probability density, the
Fokker-Planck equation \eqref{q1.2} admits a unique
smooth solution that remains positive and preserves
total mass on every finite time interval.

\section{Integral identities and basis estimates}\label{s2}
\quad 
In this section, we establish the integral identities and estimates needed for the main results. We begin with an integrated Bochner-Weitzenb\"ock identity with respect to $d\mu_t$ at each fixed time. 
Together with the variational characterization of $\ell_t$ in
\eqref{eq:1.11}, this identity yields the sharp linear endpoint
estimates in Theorem \ref{main1} ($s=0$) and
Theorem \ref{main2} ($s=2$).

Let $(M^n,g)$ be a closed, connected Riemannian manifold
of dimension $n\geq2$, equipped with a smooth vector field
$V\in\mathcal C^\infty(TM)$ and the family of probability
measures $(d\mu_t)_{t\in[0,T]}$ defined by \eqref{k1.3}.
For each $t\in[0,T]$, write $d\mu_t=\rho_t\,dv$,
where $\rho_t$ is smooth and strictly positive, and define
\begin{equation}\label{o2.1}
	A_th
	:=
	\frac{1}{\rho_t}\operatorname{div}(\rho_t\nabla h)
	=
	\Delta h+\langle\nabla\log\rho_t,\nabla h\rangle,
	\quad h\in\mathcal C^\infty(M).
\end{equation}
Set 
$$W_t:=V-\nabla\log\rho_t,
\quad
S_t:=\frac12\mathcal L_{W_t}g,
\quad
\Ric_V:=\Ric-\frac12\mathcal L_Vg.$$ 
Fix $t\in[0,T]$. For $k=1,2$, set
$$H^k(M):=W^{k,2}(M)
=
\left\{
u\in L^2(M,dv)
\;\middle|\;
\int_M|\nabla^j u|^2\,dv<\infty,\,\,
 j=1,\ldots,k
\right\},$$
where the covariant derivatives are understood in the weak sense. 
Since $\rho_t$ is smooth and strictly positive on
the compact manifold $M$, replacing $dv$ by $d\mu_t$
gives the same spaces with equivalent norms. 
For real-valued functions $h,v\in\mathcal C^\infty(M)$,
the integration by parts shows that
\begin{align*}
	\langle -A_th,v\rangle_{L^2(M, d\mu_t)}
	&=-\int_Mv\,\operatorname{div}(\rho_t\nabla h)\,dv=\int_M\langle\nabla h,\nabla v\rangle\,d\mu_t
	=\langle h,-A_tv\rangle_{L^2(M, d\mu_t)}.
\end{align*}
In particular, taking $v=h$, we obtain 
$\langle -A_th,h\rangle_{L^2(M, d\mu_t)}
\geq0.$
Thus, $-A_t$ is symmetric and nonnegative on
$\mathcal C^\infty(M)$. By unitary equivalence and \cite[Theorem 14.7]{Zwo},
$-A_t$ is essentially self-adjoint in $L^2(M,d\mu_t)$.
Its nonnegative self-adjoint closure, still denoted by
$-A_t$, has domain $H^2(M)$ and compact resolvent.
From the energy identity and connectedness of $M$, we see that
$\ker(-A_t)=\operatorname{span}\{1\}$.
Therefore, we enumerate the eigenvalues as 
$0=\lambda_0(t)<\lambda_1(t)\leq\lambda_2(t)\leq\cdots.$
The Rayleigh-Ritz principle then yields
\begin{equation}\label{w2.6}
	\lambda_1(t)
	=
	\inf_{\substack{
			h\in H^1(M;\mathbb R)\setminus\{0\}\\
			\int_Mh\,d\mu_t=0}}
	\frac{\displaystyle\int_M|\nabla h|^2\,d\mu_t}
	{\displaystyle\int_Mh^2\,d\mu_t}.
\end{equation}
 Throughout this paper, we denote by $\Gamma_{2,V}$
the $\Gamma_2$ operator associated with the
$V$-Laplacian 
$$\Gamma_{2,V}(u)
:=
\frac{1}{2}\Delta_V|\nabla u|^2
-\left\langle\nabla u,\nabla\Delta_Vu\right\rangle
=
|\nabla^2u|^2+\Ric_V(\nabla u,\nabla u).$$
\begin{lemma}\label{l2.1}
	For each $t\in[0,T]$ and any
	$u\in\mathcal C^\infty(M)$, one has
	\begin{align}\label{h2.1}
		\int_M \Gamma_{2,V}(u)\,d\mu_t
		&=
		\int_M(A_tu)^2\,d\mu_t
		-\int_M S_t(\nabla u,\nabla u)\,d\mu_t.
	\end{align}
	Consequently,
	\begin{align}\label{e2.2}
		\int_M \Gamma_{2,V}(u)\,d\mu_t
		&\geq
		\lambda_1(t)\int_M|\nabla u|^2\,d\mu_t
		-\int_M S_t(\nabla u,\nabla u)\,d\mu_t.
	\end{align}
	Moreover, every positive $u\in\mathcal C^\infty(M)$
	satisfies
	\begin{align}\label{e2.3}
		&\int_M u^2\left[\Gamma_{2,V}(u)-\frac{|\nabla u|^2}{u} A_t u\right] d \mu_t
		\geq
		\lambda_1(t)\int_Mu^2|\nabla u|^2\,d\mu_t
		-\int_Mu^2S_t(\nabla u,\nabla u)\,d\mu_t.
	\end{align}
\end{lemma}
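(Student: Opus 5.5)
The plan is to reduce everything to the symmetric operator $A_t$ and the integrated Bochner formula for it. Set $\Gamma_{2,A_t}(u):=\frac12A_t|\nabla u|^2-\langle\nabla u,\nabla A_tu\rangle$. The Bochner--Weitzenb\"ock formula of \cite[Lemma 2.1]{Li1}, applied with the gradient drift $\nabla\log\rho_t$, gives $\Gamma_{2,A_t}(u)=|\nabla^2u|^2+\Ric_{A_t}(\nabla u,\nabla u)$ with $\Ric_{A_t}=\Ric-\nabla^2\log\rho_t$.

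Since $V=\nabla\log\rho_t+W_t$, we have $\frac12\mathcal L_Vg=\nabla^2\log\rho_t+S_t$, so $\Ric_V=\Ric_{A_t}-S_t$. Hence pointwise
$$\Gamma_{2,V}(u)=\Gamma_{2,A_t}(u)-S_t(\nabla u,\nabla u).$$
Integrating against $d\mu_t$ and using the symmetry of $A_t$ handles the two terms of $\Gamma_{2,A_t}$: the term $\int_M A_t|\nabla u|^2\,d\mu_t$ vanishes (pair with the constant $1$), and $-\int_M\langle\nabla u,\nabla A_tu\rangle\,d\mu_t=\int_M(A_tu)^2\,d\mu_t$. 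This proves \eqref{h2.1}.

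For \eqref{e2.2} it suffices to show $\int_M(A_tu)^2\,d\mu_t\geq\lambda_1(t)\int_M|\nabla u|^2\,d\mu_t$. Expand $u-\int_M u\,d\mu_t=\sum_{k\geq1}c_k\phi_k$ in an $L^2(M,d\mu_t)$-orthonormal eigenbasis of $-A_t$, which exists because $-A_t$ is self-adjoint with compact resolvent. Then
$$\int_M(A_tu)^2\,d\mu_t=\sum_k\lambda_k^2c_k^2\geq\lambda_1\sum_k\lambda_kc_k^2=\lambda_1\int_M|\nabla u|^2\,d\mu_t.$$

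For \eqref{e2.3}, apply the previous step to $v:=\frac12u^2$. Here $\nabla v=u\nabla u$, so $\int_M|\nabla v|^2\,d\mu_t=\int_M u^2|\nabla u|^2\,d\mu_t$ and $S_t(\nabla v,\nabla v)=u^2S_t(\nabla u,\nabla u)$. By \eqref{e2.2} for $v$, it then suffices to check the identity
$$\int_M\Gamma_{2,V}(v)\,d\mu_t=\int_M u^2\Big[\Gamma_{2,V}(u)-\frac{|\nabla u|^2}{u}A_tu\Big]d\mu_t.$$
Since the $S_t$ terms agree, this reduces to the same identity for $\Gamma_{2,A_t}$. The key step is this computation; I expect it to be the only real work, and it goes as follows.

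Using $\nabla^2v=u\nabla^2u+\nabla u\otimes\nabla u$, we get pointwise
$$\Gamma_{2,A_t}(v)-u^2\Gamma_{2,A_t}(u)=2u\nabla^2u(\nabla u,\nabla u)+|\nabla u|^4.$$
Next, write $2u\nabla^2u(\nabla u,\nabla u)=\frac12\langle\nabla u^2,\nabla|\nabla u|^2\rangle$ and integrate by parts with respect to $d\mu_t$, using $A_t(u^2)=2uA_tu+2|\nabla u|^2$:
$$\int_M 2u\nabla^2u(\nabla u,\nabla u)\,d\mu_t=-\int_M u|\nabla u|^2A_tu\,d\mu_t-\int_M|\nabla u|^4\,d\mu_t.$$
The quartic terms cancel, leaving exactly $-\int_M u|\nabla u|^2A_tu\,d\mu_t$, which is the correction term in \eqref{e2.3}. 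Combining this with \eqref{e2.2} for $v$ completes the proof.
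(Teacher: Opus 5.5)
Your proposal is correct and follows essentially the same route as the paper. Both arguments use the pointwise splitting $\Gamma_{2,V}(u)=\Gamma_{2,A_t}(u)-S_t(\nabla u,\nabla u)$ integrated via the symmetry of $A_t$, then the bound $\int_M(A_tu)^2\,d\mu_t\geq\lambda_1(t)\int_M|\nabla u|^2\,d\mu_t$, and finally the substitution by $u^2$ (the paper uses $\sigma=u^2$ with a factor $\tfrac14$ instead of $v=\tfrac12u^2$) together with the same integration by parts to produce the correction term. The only real difference is that you obtain the spectral bound from an eigenfunction expansion, whereas the paper derives it from the Poincar\'e inequality and Cauchy--Schwarz applied to $\int_M|\nabla u|^2\,d\mu_t=-\int_M(u-\overline u_t)A_tu\,d\mu_t$.
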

\begin{proof}[Proof of Lemma \ref{l2.1}]
	Fix $t\in[0,T]$ and set 
	$b_t:=\nabla\log\rho_t.$
	Since $\rho_t$ is smooth and strictly positive, $b_t$ is a
	smooth vector field on $M$. Moreover, 
	$A_t=\Delta+\langle b_t,\nabla\cdot\rangle := \Delta_{b_t}.$
	Applying the Bochner-Weitzenb\"ock formula
	\cite[Lemma 2.1]{Li1} to $A_t$ and using the fact that $S_t=\Ric_{b_t}-\Ric_V$, we get
\begin{align}
	\frac{1}{2} A_t|\nabla u|^2=\frac{1}{2} \Delta_{b_t}|\nabla u|^2 & =\left|\nabla^2 u\right|^2+\Ric_{b_t}(\nabla u, \nabla u)+\left\langle\nabla u, \nabla\left(\Delta_{b_t} u\right)\right\rangle \nonumber\\
	&=\left|\nabla^2 u\right|^2+\Ric_V(\nabla u, \nabla u)+\left\langle\nabla u, \nabla\left(A_t u\right)\right\rangle+S_t(\nabla u, \nabla u) .
\end{align}
	Using the above identity and
integrating with respect to $d\mu_t$, we find that 
\begin{align}\label{e2.5}
	\int_M \Gamma_{2,V}(u) d \mu_t
	& =\frac{1}{2} \int_M A_t|\nabla u|^2 d \mu_t-\int_M\left\langle\nabla u, \nabla\left(A_t u\right)\right\rangle d \mu_t-\int_M S_t(\nabla u, \nabla u) d \mu_t.
\end{align}
Since $M$ is a closed Riemannian manifold, the divergence theorem gives
$$
\int_M A_t|\nabla u|^2 d \mu_t=\int_M \operatorname{div}\left(\rho_t \nabla|\nabla u|^2\right) d v=0.
$$
Moreover,
$$
-\int_M\left\langle\nabla u, \nabla A_t u\right\rangle d \mu_t=-\int_M\left\langle\rho_t \nabla u, \nabla A_t u\right\rangle d v=\int_M\left(A_t u\right) \operatorname{div}\left(\rho_t \nabla u\right) d v=\int_M\left(A_t u\right)^2 d \mu_t.
$$
Substituting the two identities above into \eqref{e2.5}, we obtain \eqref{h2.1}.
	
	Since $\mu_t$ is a probability measure, the
	$\mu_t$-average of $u$ is 
	$\overline u_t:=\int_Mu\,d\mu_t.$
	Set $w:=u-\overline u_t$. Then, we have
$
\int_M w d \mu_t=0, \nabla w=\nabla u,$ and $A_t w=A_t u.
$
Thus, $w$ has zero mean with respect to the weighted measure
$d\mu_t$ and is therefore admissible in the variational
characterization of $\lambda_1(t)$. 
If $w\not\equiv0$, then \eqref{w2.6} deduces that 
$$\lambda_1(t)\int_Mw^2\,d\mu_t
\leq
\int_M|\nabla w|^2\,d\mu_t.$$
If $w\equiv0$, the same inequality holds trivially. Hence, we get 
	\begin{equation}\label{w2.5}
		\left(\int_Mw^2\,d\mu_t\right)^{\frac12}
		\leq
		\frac{1}{\sqrt{\lambda_1(t)}}
		\left(\int_M|\nabla u|^2\,d\mu_t\right)^{\frac12}.
	\end{equation}
This and 
	the Cauchy-Schwarz inequality entail that
	\begin{align*}
		\int_M|\nabla u|^2\,d\mu_t
		=
		-\int_MwA_tu\,d\mu_t
		&\leq
		\frac{1}{\sqrt{\lambda_1(t)}}
		\left(\int_M|\nabla u|^2\,d\mu_t\right)^{\frac12}
		\left(\int_M(A_tu)^2\,d\mu_t\right)^{\frac12}.
	\end{align*}
From this, we conclude that
	\begin{equation}\label{e2.6}
		\int_M(A_tu)^2\,d\mu_t
		\geq
		\lambda_1(t)\int_M|\nabla u|^2\,d\mu_t.
	\end{equation}
	This, combined with \eqref{h2.1} implies \eqref{e2.2}.
	
	We next turn to the proof of \eqref{e2.3}. To this end, we employ the change of variables $\sigma=u^2$ to rewrite the right-hand side of \eqref{e2.3} in the form of the right-hand side of \eqref{e2.2}. 
	Let $\left\{e_i\right\}_{i=1}^n$ be a local orthonormal frame. We denote the components of $\nabla u$ with respect to $\left\{e_i\right\}_{i=1}^n$ by $u_i:=d u\left(e_i\right)$. Then, we have $(d u \otimes d u)\left(e_i, e_j\right)=u_i u_j.$ Hence,
	\begin{equation}\label{e2.9}
	\left\langle\nabla^2 u, d u \otimes d u\right\rangle=(\nabla^2u)(\nabla u,\nabla u),\quad 	|d u \otimes d u|^2=|\nabla u|^4.
	\end{equation}
From \eqref{e2.9}, we obtain
$$
\frac{1}{4}\left|\nabla^2 \sigma\right|^2= u^2\left|\nabla^2 u\right|^2+2 u\left\langle\nabla^2 u, d u \otimes d u\right\rangle+|d u \otimes d u|^2=u^2\left|\nabla^2 u\right|^2+2 u\left(\nabla^2 u\right)(\nabla u, \nabla u)+|\nabla u|^4 .
$$
This and the identity 	$\Ric_V(\nabla\sigma,\nabla\sigma)
=
4u^2\Ric_V(\nabla u,\nabla u),$ give that
\begin{align}\label{e2.12}
	 \frac{1}{4} 	\int_M \Gamma_{2,V}(\sigma)\,d\mu_t
	=\int_M\left[u^2\left|\nabla^2 u\right|^2+u^2 \Ric_V(\nabla u, \nabla u)+2 u\left(\nabla^2 u\right)(\nabla u, \nabla u)+|\nabla u|^4\right] d \mu_t.
\end{align}
Using the integration by parts formula, we see that 
	\begin{equation}\label{e2.11} 
	-\int_M u|\nabla u|^2 A_t u d \mu_t=-\int_M u|\nabla u|^2 \operatorname{div}\left(\rho_t \nabla u\right) d v=\int_M\left\langle\nabla\left(u|\nabla u|^2\right), \nabla u\right\rangle d \mu_t.
	\end{equation}
Note that 
\begin{align}\label{q2.16}
\left\langle \nabla |\nabla u|^2,\nabla u\right\rangle=(\nabla u)\bigl(|\nabla u|^2\bigr)=2\left\langle \nabla_{\nabla u}\nabla u,\nabla u\right\rangle=2(\nabla^2u)(\nabla u,\nabla u).
\end{align}
This, \eqref{e2.12} and \eqref{e2.11} lead to
\begin{align}\label{w2.14}
\int_M u^2\left[\Gamma_{2,V}(u)-\frac{|\nabla u|^2}{u} A_t u\right] d \mu_t=\frac{1}{4} 	\int_M \Gamma_{2,V}(\sigma)\,d\mu_t.
\end{align}
	Together with \eqref{e2.6}, this yields \eqref{e2.3}. \end{proof}

Lemma \ref{l2.1} and the variational definition of
$\ell_t$ in \eqref{eq:1.11} give the following sharp estimates at
the linear endpoints $s=0$ and $s=2$. Indeed, \eqref{e2.2} and a test with an eigenfunction
of $-A_t$ corresponding to $\lambda_1(t)$ give
$\ell_t\in\mathbb R$, with $\ell_t=\lambda_1(t)$ if $S_t=0$.
Any test function can be made positive by adding a constant
without changing the quotient in \eqref{eq:1.11}, so
\eqref{p1.13} is sharp for positive functions. Finally, applying \eqref{p1.13} to $h=w^2$ and
using the identity \eqref{w2.14}, we obtain \eqref{p1.14}
with the same sharp constant $\ell_t$.
\begin{corollary}\label{c2.2}
	Let $(M^n,g)$ be a closed, connected Riemannian manifold
	of dimension $n\geq2$, equipped with a smooth vector
	field $V\in\mathcal C^\infty(TM)$ and the family of
	probability measures $(d\mu_t)_{t\in[0,T]}$ defined
	by \eqref{k1.3}.
	For each $t\in[0,T]$, let $\ell_t$ be defined
	by \eqref{eq:1.11}. Then, $\ell_t\in\mathbb R$, and
	every positive $w\in\mathcal C^\infty(M)$ satisfies
	\begin{align}
		\int_M\Gamma_{2,V}(w)\,d\mu_t
		&\geq
		\ell_t\int_M|\nabla w|^2\,d\mu_t,
		\label{p1.13}\\
		\int_M w^2\left[
		\Gamma_{2,V}(w)
		-w^{-1}|\nabla w|^2A_tw
		\right]\,d\mu_t
		&\geq
		\ell_t\int_Mw^2|\nabla w|^2\,d\mu_t.
		\label{p1.14}
	\end{align}
	At each fixed time $t$, $\ell_t$ is the largest
	constant in each inequality valid for all positive
	smooth functions.
	Moreover, if $S_t\equiv0$, then
	$\ell_t=\lambda_1(t)$.
\end{corollary}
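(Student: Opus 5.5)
The plan is to build everything on the integrated identity \eqref{h2.1} of Lemma \ref{l2.1}. For any $h\in\mathcal C^\infty(M)$ it reads
$$
\int_M\Gamma_{2,V}(h)\,d\mu_t=\int_M(A_th)^2\,d\mu_t-\int_M S_t(\nabla h,\nabla h)\,d\mu_t .
$$

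\emph{Step 1: $\ell_t$ is finite.} From \eqref{e2.6}, the numerator of the quotient in \eqref{eq:1.11} is at least
$$
\lambda_1(t)\int_M|\nabla h|^2\,d\mu_t-\|S_t\|_{L^\infty}\int_M|\nabla h|^2\,d\mu_t .
$$
Hence $\ell_t\geq\lambda_1(t)-\|S_t\|_{L^\infty}>-\infty$. Testing with a first eigenfunction of $-A_t$, for which $\nabla h\not\equiv0$, gives $\ell_t<\infty$. So $\ell_t\in\mathbb R$.

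\emph{Step 2: \eqref{p1.13}.} Let $w>0$. If $\nabla w\equiv0$, both sides vanish. Otherwise $w$ is admissible in \eqref{eq:1.11}, so the right-hand side of \eqref{h2.1} applied to $w$ is at least $\ell_t\int_M|\nabla w|^2\,d\mu_t$. This is \eqref{p1.13}.

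\emph{Step 3: sharpness of \eqref{p1.13}.} Suppose $C$ works for all positive smooth functions. Take any $h$ with $\nabla h\not\equiv0$ and set $w=h+c$ with $c>\max|h|$. Then $w>0$, and $\nabla w=\nabla h$, $A_tw=A_th$. By \eqref{h2.1}, the inequality for $w$ is exactly the statement that the quotient of $h$ is at least $C$. Taking the infimum over $h$ gives $C\leq\ell_t$.

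\emph{Step 4: \eqref{p1.14}.} Put $\sigma=w^2$. Identity \eqref{w2.14} says the left side of \eqref{p1.14} equals $\frac14\int_M\Gamma_{2,V}(\sigma)\,d\mu_t$. Since $\nabla\sigma=2w\nabla w$, we have $\frac14|\nabla\sigma|^2=w^2|\nabla w|^2$. Applying \eqref{p1.13} to the positive function $\sigma$ then gives \eqref{p1.14}.

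\emph{Step 5: sharpness of \eqref{p1.14}.} Suppose $C$ works for all positive $w$. Every positive smooth $\sigma$ equals $w^2$ with $w=\sqrt\sigma$ positive and smooth. Through \eqref{w2.14}, $C$ therefore also works in \eqref{p1.13} for all positive $\sigma$, so $C\leq\ell_t$ by Step 3.

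\emph{Step 6: the case $S_t\equiv0$.} Now the quotient in \eqref{eq:1.11} is $\int_M(A_th)^2\,d\mu_t\big/\int_M|\nabla h|^2\,d\mu_t$. This is at least $\lambda_1(t)$ by \eqref{e2.6}, with equality for a first eigenfunction. Hence $\ell_t=\lambda_1(t)$.

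The only point needing care is Step 3: the test functions must be positive, and they must satisfy $\nabla h\not\equiv0$ exactly when the quotient is defined. Adding a constant settles this, because $\nabla$ and $A_t$ both annihilate constants.
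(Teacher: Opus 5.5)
Your proposal is correct and follows essentially the same route as the paper. Identity \eqref{h2.1} combined with \eqref{e2.2}/\eqref{e2.6} and a first-eigenfunction test gives finiteness of $\ell_t$ and the case $S_t\equiv 0$; shifting by constants gives sharpness; and the substitution $\sigma=w^2$ via \eqref{w2.14} gives \eqref{p1.14}. Your Step 5 simply makes explicit the bijection $w\mapsto w^2$ on positive smooth functions, which the paper leaves implicit when it asserts that \eqref{p1.14} has the same sharp constant.
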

We next establish two integral identities of Cai-Wang type \cite[Lemma 2.2]{Cai}, which will be used in the proofs of Theorems \ref{main1} and \ref{main2}.
\begin{lemma}\label{l2.2}
	For each $t\in[0,T]$ and a positive function
	$u\in\mathcal C^\infty(M)$, set
$$
\widetilde{H}_t u:=\nabla^2 u+d\left(\log \rho_t\right) \odot d u-\frac{A_t u}{n} g,
$$
	where 
	$$\eta\odot\zeta
	:=
	\frac{1}{2}
	\left(\eta\otimes\zeta+\zeta\otimes\eta\right)$$
	for one-forms $\eta$ and $\zeta$, and put
	\begin{equation}\label{w3.1}
		Q(u):=
		\frac{du\otimes du}{u}
		-\frac{|\nabla u|^2}{nu}g.
	\end{equation}
	Then, we have
	\begin{align}\label{e2.14}
		\int_M\frac{|\nabla u|^2A_tu}{u}\,d\mu_t
		&=
		\frac{n}{n+2}
		\int_M\frac{|\nabla u|^4}{u^2}\,d\mu_t
		-\frac{2n}{n+2}
		\int_M\langle\widetilde H_tu,Q(u)\rangle\,d\mu_t
		\nonumber\\
		&\quad
		+\frac{2n}{n+2}
		\int_M
		\frac{|\nabla u|^2
			\langle\nabla\log\rho_t,\nabla u\rangle}{u}
		\,d\mu_t.
	\end{align}
	Moreover,
	\begin{align}\label{l2.15}
		\int_M
		\left\langle\nabla^2u,\frac{du\otimes du}{u}\right\rangle
		\,d\mu_t
		&=
		\frac{1}{n+2}
		\int_M\frac{|\nabla u|^4}{u^2}\,d\mu_t
		+\frac{n}{n+2}
		\int_M\langle\widetilde H_tu,Q(u)\rangle\,d\mu_t
		\nonumber\\
		&\quad
		-\frac{n}{n+2}
		\int_M
		\frac{|\nabla u|^2
			\langle\nabla\log\rho_t,\nabla u\rangle}{u}
		\,d\mu_t.
	\end{align}
\end{lemma}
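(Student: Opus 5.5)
The plan is to reduce both identities to a $2\times2$ linear system in two unknown integrals. One relation comes from an integration by parts with respect to $d\mu_t$. The other comes from expanding the pairing $\langle\widetilde H_tu,Q(u)\rangle$ pointwise. Throughout, write $b:=\nabla\log\rho_t$ and introduce
$$
I_1:=\int_M\frac{|\nabla u|^2A_tu}{u}\,d\mu_t,\quad
I_2:=\int_M\frac{(\nabla^2u)(\nabla u,\nabla u)}{u}\,d\mu_t,\quad
I_3:=\int_M\frac{|\nabla u|^4}{u^2}\,d\mu_t,
$$
$$
I_4:=\int_M\frac{|\nabla u|^2\langle b,\nabla u\rangle}{u}\,d\mu_t,\quad
J:=\int_M\langle\widetilde H_tu,Q(u)\rangle\,d\mu_t .
$$
By \eqref{e2.9}, the left-hand side of \eqref{l2.15} is exactly $I_2$.

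\textbf{Step 1 (pointwise expansion of $J$).} The tensor $Q(u)$ is trace-free, so $\langle g,Q(u)\rangle=0$ and the term $-\frac{A_tu}{n}g$ in $\widetilde H_tu$ does not contribute. Using \eqref{e2.9} again gives
$$
\langle\nabla^2u,Q(u)\rangle=\frac{(\nabla^2u)(\nabla u,\nabla u)}{u}-\frac{|\nabla u|^2\Delta u}{nu}.
$$
For the symmetric product, $\langle db\odot du,du\otimes du\rangle=\langle b,\nabla u\rangle|\nabla u|^2$ and $\langle db\odot du,g\rangle=\langle b,\nabla u\rangle$. Hence
$$
\langle d\log\rho_t\odot du,Q(u)\rangle=\frac{n-1}{n}\,\frac{\langle b,\nabla u\rangle|\nabla u|^2}{u}.
$$
Next I would substitute $\Delta u=A_tu-\langle b,\nabla u\rangle$ and integrate. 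This should give
$$
J=I_2-\tfrac1n I_1+\tfrac1n I_4+\tfrac{n-1}{n}I_4=I_2-\tfrac1nI_1+I_4 .
$$

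\textbf{Step 2 (integration by parts).} I would argue as in \eqref{e2.11}, using the symmetry of $A_t$ in $L^2(M,d\mu_t)$ with test function $|\nabla u|^2/u$. Combined with \eqref{q2.16}, this yields
$$
I_1=-\int_M\Big\langle\nabla\frac{|\nabla u|^2}{u},\nabla u\Big\rangle d\mu_t=-2I_2+I_3 .
$$
Here positivity of $u$ guarantees smoothness of the test function.

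\textbf{Step 3 (solve the system).} From Step 1, $I_2=J+\frac1nI_1-I_4$. Inserting this into Step 2 gives $\bigl(1+\frac2n\bigr)I_1=I_3-2J+2I_4$, that is,
$$
I_1=\frac{n}{n+2}\bigl(I_3-2J+2I_4\bigr),
$$
which is \eqref{e2.14}. Then $I_2=\frac12(I_3-I_1)$, which simplifies to $\frac1{n+2}I_3+\frac{n}{n+2}J-\frac{n}{n+2}I_4$, and this is \eqref{l2.15}.

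The only delicate point is the bookkeeping in Step 1. One must get the factor $\frac{n-1}{n}$ from the symmetric product right. One must also track the drift term from rewriting $\Delta u$ as $A_tu$: the two $I_4$ contributions must combine to a coefficient of exactly $1$ for the stated constants to appear. Everything else is linear algebra.
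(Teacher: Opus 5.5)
Your proof is correct and follows essentially the same route as the paper: you use the integration by parts $I_1=I_3-2I_2$ together with a pointwise tensor identity relating $I_2$ to $\int_M\langle\widetilde H_tu,Q(u)\rangle\,d\mu_t$, $I_1$ and $I_4$, and then solve the resulting linear system. The only difference is cosmetic. The paper decomposes $\nabla^2u$ and uses that $\widetilde H_tu$ is trace-free to replace $\frac{du\otimes du}{u}$ by $Q(u)$, whereas you expand $\widetilde H_tu$ and use that $Q(u)$ is trace-free; both lead to the same relation $\int_M\langle\widetilde H_tu,Q(u)\rangle\,d\mu_t=I_2-\frac1nI_1+I_4$.
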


\begin{proof}[Proof of Lemma \ref{l2.2}]
We now introduce the quantities
$$
I:=\int_M \frac{|\nabla u|^2 A_t u}{u} d \mu_t, \quad J:=\int_M\left\langle\nabla^2 u, \frac{d u \otimes d u}{u}\right\rangle d \mu_t .
$$	Using the integration by parts formula, we obtain
	\begin{align}\label{e2.15}
	I=-\int_M\left\langle\nabla\left(\frac{|\nabla u|^2}{u}\right), \nabla u\right\rangle d \mu_t=\int_M \frac{|\nabla u|^4}{u^2}d \mu_t -\int_M\frac{1}{u}\left\langle\nabla |\nabla u|^2, \nabla u\right\rangle d \mu_t.
	\end{align}
Plugging \eqref{e2.9} and \eqref{q2.16} in the above quantity,  we deduce that
\begin{align}\label{e2.17}
	I=\int_M \frac{|\nabla u|^4}{u^2} d\mu_t-2 \int_M\left\langle\nabla^2 u, \frac{d u \otimes d u}{u}\right\rangle d\mu_t=\int_M \frac{|\nabla u|^4}{u^2} d\mu_t-2J.
\end{align}
	We next express $J$ in terms of $\widetilde H_tu$.
	By definition of $\widetilde H_tu$, one finds  that
	\begin{equation}\label{e2.22}
	J=\int_M J_1 d \mu_t-\int_M J_2 d \mu_t+\int_M J_3 d \mu_t,
	\end{equation}
	where
	\vspace{-0.1cm}
$$
J_1:=\left\langle\widetilde{H}_t u, \frac{d u \otimes d u}{u}\right\rangle, \quad J_2:=\left\langle d\left(\log \rho_t\right) \odot d u, \frac{d u \otimes d u}{u}\right\rangle,\quad J_3:=
\left\langle
\frac{A_tu}{n}g,\frac{du\otimes du}{u}
\right\rangle.
$$
We next make use of the identity $\langle A, g\rangle=\tr_g A$, which holds for every $(0,2)$-tensor $A,$ in order to establish \eqref{l2.15}. Note that
	\begin{align*}
		\langle\widetilde H_tu,g\rangle
		&=
		\operatorname{tr}_g
		\left(
		\nabla^2u+d(\log\rho_t)\odot du
		-\frac{A_tu}{n}g
		\right)=
		\Delta u+\langle\nabla\log\rho_t,\nabla u\rangle-A_tu
		=0.
	\end{align*}
	Thus,  $\widetilde H_tu$ is trace-free and
	\begin{equation}\label{e2.23}
	J_1=\left\langle\widetilde{H}_t u, \frac{d u \otimes d u}{u}-\frac{|\nabla u|^2}{n u} g\right\rangle=\left\langle\widetilde{H}_t u, Q(u)\right\rangle.
	\end{equation}
	To compute $J_2$, we use the identity 
	$\langle\eta\otimes\zeta,\theta\otimes\omega\rangle
	=
	\langle\eta,\theta\rangle
	\langle\zeta,\omega\rangle$
	for one-forms $\eta,\zeta,\theta,\omega$.
	It follows that
	\begin{align}\label{e2.24}
		J_2
		&=
		\frac{1}{2u}
		\left\langle
		d(\log\rho_t)\otimes du,du\otimes du
		\right\rangle
		+
		\frac{1}{2u}
		\left\langle
		du\otimes d(\log\rho_t),du\otimes du
		\right\rangle=
		\frac{|\nabla u|^2
			\langle\nabla\log\rho_t,\nabla u\rangle}{u}.
	\end{align}
	Similarly,
\begin{align}
J_3=\frac{A_t u}{n u}\langle g, d u \otimes d u\rangle=\frac{A_t u}{n u} \operatorname{tr}_g(d u \otimes d u)=\frac{|\nabla u|^2 A_t u}{n u}.\nonumber
\end{align}
From this,  \eqref{e2.22}, \eqref{e2.23} and \eqref{e2.24}, we deduce that
\begin{align}\label{k2.24}
J=\int_M\left\langle\widetilde{H}_t u, Q(u)\right\rangle d \mu_t-\int_M \frac{|\nabla u|^2\left\langle\nabla \log \rho_t, \nabla u\right\rangle}{u} d \mu_t+\frac{I}{n}
\end{align}
	Substituting this expression into \eqref{e2.17} yields
$$
\frac{n+2}{n} I=\int_M \frac{|\nabla u|^4}{u^2} d \mu_t-2 \int_M\left\langle\widetilde{H}_t u, Q(u)\right\rangle d \mu_t+2 \int_M \frac{|\nabla u|^2\left\langle\nabla \log \rho_t, \nabla u\right\rangle}{u} d \mu_t.
$$
Thus, \eqref{e2.14} is established. Applying \eqref{e2.14} to the term $\frac{1}{n}I$ in \eqref{k2.24} immediately yields \eqref{l2.15}.
\end{proof}
 \section{Spectral $\Gamma_2$ estimates for the $V$-Laplacian}
\label{s3}

\quad In this section, we prove Theorems  \ref{main1}
and  \ref{main2} by combining the weighted integral
identities and estimates established in Section  \ref{s2}
with the spectral gap $\lambda_1(t)$ of $-A_t$.
Throughout the proofs, $t\in[0,T]$ is fixed. 
Following the strategy of Cai-Wang \cite{Cai},
we use a power substitution and complete squares
involving the weighted trace-free Hessian
$\widetilde H_tu$ and the tensor $Q(u)$ defined
in \eqref{w3.1}.
The contribution of the general drift is incorporated
through $\mathcal R_{\alpha,t}$.
The terms containing $\nabla\log\rho_t$ are estimated
together with the spectral terms, leading to the
coefficient $\widehat{\mathcal C}_t$.

\subsection{Proof of Theorem  \ref{main1}}

\begin{proof}[Proof of Theorem  \ref{main1}]
	Fix $t\in[0,T]$ and
	$\alpha\in\left(\alpha_s,\frac{n}{n-1}\right)$.
	We first consider $s\neq-2$ and introduce
$u:=w^{\frac{s+2}{2}}, r:=-\frac{s}{s+2}.$
	Then, we get $w=u^{r+1},  w^s=u^{-2 r},$ and
$$
\nabla w=(r+1) u^r \nabla u, \quad \nabla^2 w=(r+1) u^r\left(\nabla^2 u+r \frac{d u \otimes d u}{u}\right).
$$
	Consequently,
$$
\int_M w^s|\nabla w|^2 d \mu_t=(r+1)^2 \int_M|\nabla u|^2 d \mu_t,
$$
	and
$$
\int_M w^s \Gamma_{2, V}(w) d \mu_t=(r+1)^2 \int_M\left[\left|\nabla^2 u+r \frac{d u \otimes d u}{u}\right|^2+\Ric_V(\nabla u, \nabla u)\right] d \mu_t.
$$
Thus, to prove Theorem \ref{main1}, it is enough to estimate the integral
$$
\mathcal{I}:=\int_M\left[\left|\nabla^2 u+r \frac{d u \otimes d u}{u}\right|^2+\Ric_V(\nabla u, \nabla u)\right] d \mu_t.
$$
For the sake of convenience in the subsequent proof, we now set
$$
\mathcal{A}:=\int_M \frac{|\nabla u|^4}{u^2} d\mu_t, \quad E:=\int_M|\nabla u|^2 d\mu_t, \quad 	F:=\langle\nabla\log\rho_t,\nabla u\rangle, \quad 	X:=A_tu.
$$
	We also write
$$
J:=\int_M\left\langle\nabla^2 u, \frac{d u \otimes d u}{u}\right\rangle d \mu_t, \quad D:=\int_M \frac{|\nabla u|^2 F}{u} d \mu_t .
$$
If $E=0$, then $u$, and hence, $w$ is constant, and the
inequality \eqref{e1.4} is immediate. We therefore assume $E>0$. Note that
\begin{align}\label{q3.4}
	\left|\nabla^2 u+r \frac{d u \otimes d u}{u}\right|^2 =\left|\nabla^2 u\right|^2+2 r\left\langle\nabla^2 u, \frac{d u \otimes d u}{u}\right\rangle+r^2 \frac{|\nabla u|^4}{u^2} .
\end{align}
Consequently,
\begin{align}
\mathcal{I}=\int_M\left[\left|\nabla^2 u\right|^2+\Ric_V(\nabla u, \nabla u)\right] d \mu_t+2 r J+r^2 \mathcal{A}.
\end{align}
	By \eqref{h2.1}, we see that 
$$
\int_M\left[\left|\nabla^2 u\right|^2+\Ric_V(\nabla u, \nabla u)\right] d \mu_t=\int_M X^2 d \mu_t-\int_M S_t(\nabla u, \nabla u) d \mu_t.
$$
	Splitting this identity with the parameter $\alpha$,
	we obtain
$$
\begin{aligned}
	\mathcal{I}= & (1-\alpha) \int_M X^2 d \mu_t+\alpha \int_M\left|\nabla^2 u\right|^2 d \mu_t +\int_M\left[\alpha \Ric_V+(\alpha-1) S_t\right](\nabla u, \nabla u) d \mu_t+2 r J+r^2 \mathcal{A}.
\end{aligned}
$$
	Since 
	$\alpha\Ric_V+(\alpha-1)S_t
	=\mathcal R_{\alpha,t}
	\geq\mathfrak r_\alpha(t)g,$
	it follows that
	\begin{align}\label{e3.5}
		\mathcal I
		&\geq
		\mathfrak r_\alpha(t)E
		+
		\int_M
		\left[
		(1-\alpha)X^2+\alpha|\nabla^2u|^2
		\right]d\mu_t
		+2rJ+r^2\mathcal A,
	\end{align}
where $\mathfrak{r}_\alpha(t):=\inf _{\substack{x \in M \\|\xi|_g=1}}\left(\mathcal{R}_{\alpha, t}\right)_x(\xi, \xi)$.
	We next separate the weighted trace of the Hessian.
A straightforward computation yields
\begin{align}\label{r3.7}
	(1-\alpha) X^2+\alpha\left|\nabla^2 u\right|^2 =\frac{d_\alpha}{n} X^2-\frac{2 \alpha}{n} X F+\alpha\left|\nabla^2 u-\frac{X}{n} g\right|^2,
\end{align}
where 
$d_\alpha:=n-(n-1)\alpha.$
Substituting this into \eqref{e3.5} gives 
	\begin{align}\label{e3.7}
	\mathcal{I} \geq \mathfrak{r}_\alpha(t) E+\int_M\left(\frac{d_\alpha}{n} X^2-\frac{2 \alpha}{n} X F
\right) d \mu_t+\alpha \int_M\left|\nabla^2 u-\frac{X}{n} g\right|^2 d \mu_t+2 r J+r^2 \mathcal{A}.
	\end{align}
	The identity \eqref{l2.15} yields
$$
J=\frac{1}{n+2} \mathcal{A}+\frac{n}{n+2} \int_M\left\langle\widetilde{H}_t u, Q(u)\right\rangle d \mu_t-\frac{n}{n+2} D,
$$
	where
$\widetilde{H}_t u=\nabla^2 u+d\left(\log \rho_t\right) \odot d u-\frac{X}{n} g$.
From this and \eqref{e3.7}, we have
	\begin{align}\label{e3.6}
		\mathcal{I} &\geq \mathfrak{r}_\alpha(t) E+\int_M\left(\frac{d_\alpha}{n} X^2-\frac{2 \alpha}{n} X F
\right) d \mu_t+\alpha \int_M\left|\nabla^2 u-\frac{X}{n} g\right|^2 d \mu_t \nonumber\\
		&\quad +\frac{2 n r}{n+2} \int_M\left\langle\widetilde{H}_t u, Q(u)\right\rangle d \mu_t+\left(r^2+\frac{2 r}{n+2}\right) \mathcal{A}-\frac{2 n r}{n+2} D.
	\end{align}
We now define 
	$$Z:=
	\nabla^2u-\frac{X}{n}g
	+\frac{nr}{(n+2)\alpha}Q(u).$$
The fact that 
$\nabla^2 u-\frac{X}{n} g=\widetilde{H}_t u-d\left(\log \rho_t\right) \odot d u$ deduces that
	\begin{align}\label{k3.9}
		|Z|^2
		&=
		\left|\nabla^2u-\frac{X}{n}g\right|^2
		+
		\frac{2nr}{(n+2)\alpha}
		\langle\widetilde H_tu,Q(u)\rangle
		\nonumber\\
		&\quad
		-
		\frac{2nr}{(n+2)\alpha}
		\langle d(\log\rho_t)\odot du,Q(u)\rangle
		+
		\frac{n^2r^2}{(n+2)^2\alpha^2}|Q(u)|^2.
	\end{align}
	The definition of $Q(u)$ gives
	\begin{align}\label{p3.11}
		\langle d(\log\rho_t)\odot du,Q(u)\rangle
		&=
		\frac{|\nabla u|^2F}{u}
		-
		\frac{|\nabla u|^2}{nu}
		\operatorname{tr}_g
		\bigl(d(\log\rho_t)\odot du\bigr)=
		\frac{n-1}{n}\frac{|\nabla u|^2F}{u}.
	\end{align}
	Moreover,
	\begin{align}\label{w3.9}
		|Q(u)|^2
		&=
		\frac{|du\otimes du|^2}{u^2}
		-
		\frac{2|\nabla u|^2}{nu^2}
		\langle du\otimes du,g\rangle
		+
		\frac{|\nabla u|^4}{n^2u^2}|g|^2
		=
		\frac{n-1}{n}\frac{|\nabla u|^4}{u^2}.
	\end{align}
This, \eqref{k3.9} and \eqref{p3.11} give that
	\begin{align*}
		\alpha\int_M|Z|^2\,d\mu_t
		&=
		\alpha\int_M
		\left|\nabla^2u-\frac{X}{n}g\right|^2d\mu_t
		+
		\frac{2nr}{n+2}
		\int_M\langle\widetilde H_tu,Q(u)\rangle\,d\mu_t
		-
		\frac{2r(n-1)}{n+2}D
		+
		\frac{n(n-1)r^2}{(n+2)^2\alpha}\mathcal A.
	\end{align*}
This result, combined with \eqref{e3.6}, yield that
	\begin{align}\label{e3.10}
	\mathcal{I} \geq \mathfrak{r}_\alpha(t) E+\int_M\left(\frac{d_\alpha}{n} X^2-\frac{2 \alpha}{n} X F
\right) d \mu_t+\alpha \int_M|Z|^2 d \mu_t+K_{s, \alpha} \mathcal{A}-\frac{2 r}{n+2} D,
	\end{align}
	where 
	$K_{s,\alpha}
	:=r^2+\frac{2r}{n+2}
	-\frac{n(n-1)r^2}{(n+2)^2\alpha}.$ Notice that
	$
	\tr_g Z=\Delta u-A_t u=-F
	.$
	Consequently,
	\[
	|Z|^2
	=
	\left|Z+\frac{F}{n}g\right|^2+\frac{F^2}{n}
	\geq\frac{F^2}{n}.
	\]
	Combining this with \eqref{e3.10}, we arrive at
	\begin{align}\label{e3.11}
	\mathcal{I} \geq \mathfrak{r}_\alpha(t) E+\int_M\left(\frac{d_\alpha}{n} X^2-\frac{2 \alpha}{n} X F+\frac{\alpha}{n} F^2\right) d \mu_t+K_{s, \alpha} \mathcal{A}-\frac{2 r}{n+2} D .
	\end{align}
Since 
	$r=-\frac{s}{s+2}, 
	\alpha_s=
	\frac{n(n-1)s}{(n+2)(ns-4)},$
	we get
	$\frac{n(n-1)r^2}{(n+2)^2}\alpha_s^{-1}
	=
	r^2+\frac{2r}{n+2}.$
	It follows that
$$
K_{s, \alpha}=\frac{n(n-1) r^2}{(n+2)^2}\left(\alpha_s^{-1}-\alpha^{-1}\right)>0,
$$
	since $\alpha>\alpha_s>0$ and $r\neq0$. 
We next estimate the quadratic expression involving $X$ and $F$.
Set 
$$x:=\frac{\|X\|_{L^2\left(M, d \mu_t\right)}}{\sqrt{E}}, \quad y:=\frac{\|F\|_{L^2\left(M, d \mu_t\right)}}{\sqrt{E}} .$$
	The spectral estimate \eqref{e2.6} and the fact that $|F| \leq H_t|\nabla u|$ give $x\geq \sqrt{\lambda_1(t)}$ and $0\leq y \leq H_t.$ 
From the Cauchy-Schwarz inequality, we see that
	\[
	\left|\int_MXF\,d\mu_t\right|
	\leq
	\|X\|_{L^2(M, d\mu_t)}
	\|F\|_{L^2(M, d\mu_t)}
	=
	xyE.
	\]
	Since $\alpha>0$, this yields
\begin{align}\label{w3.11}
\int_M\left(\frac{d_\alpha}{n} X^2-\frac{2 \alpha}{n} X F+\frac{\alpha}{n} F^2\right) d \mu_t \geq\left(\frac{d_\alpha}{n} x^2-\frac{2 \alpha}{n} x y+\frac{\alpha}{n} y^2\right) E.
\end{align}
On the other hand, by the Cauchy-Schwarz inequality, we have
	$$
	|D|=\left|\int_M \frac{|\nabla u|^2 F}{u} d \mu_t\right| \leq\left(\int_M \frac{|\nabla u|^4}{u^2} d \mu_t\right)^{\frac{1}{2}}\left(\int_M F^2 d \mu_t\right)^{\frac{1}{2}}=y \sqrt{\mathcal{A} E}.
	$$
	 Therefore,
	\begin{align}\label{w3.12}
		K_{s,\alpha}\mathcal A-\frac{2r}{n+2}D
		&\geq
	K_{s, \alpha}\left(\sqrt{\mathcal{A}}-\frac{|r| y}{(n+2) K_{s, \alpha}} \sqrt{E}\right)^2-\frac{r^2 y^2}{(n+2)^2 K_{s, \alpha}} E \geq-\chi_{\alpha_s, \alpha} y^2 E,
	\end{align}
where $\chi_{a, \alpha}:=\frac{1}{n(n-1)\left(a^{-1}-\alpha^{-1}\right)}$.
	Combining \eqref{w3.11} and \eqref{w3.12} with \eqref{e3.11}, we obtain
	\begin{align}\label{e3.12}
		\mathcal I
		&\geq
		\left[
		\mathfrak r_\alpha(t)
		+
		\widehat\Psi_{\alpha,\alpha_s}
		\bigl(\lambda_1(t),H_t\bigr)
		\right]E=
		\widehat{\mathcal C}_t(\alpha_s,\alpha)E.
	\end{align}
	Here, the above second inequality follows from
	$x\geq\sqrt{\lambda_1(t)}$ and $0\leq y\leq H_t$.
	Returning to $w$, we conclude that
	\begin{align*}
		\int_Mw^s\Gamma_{2,V}(w)\,d\mu_t
		=
		(r+1)^2\mathcal I\geq
		\widehat{\mathcal C}_t(\alpha_s,\alpha)(r+1)^2E=
		\widehat{\mathcal C}_t(\alpha_s,\alpha)
		\int_Mw^s|\nabla w|^2\,d\mu_t.
	\end{align*}
	This proves \eqref{e1.4} for $s\neq-2$.
	
	We now consider $s=-2$, where the preceding change
	of variables degenerates. Fix
	$\alpha_{-2}<\alpha<\frac{n}{n-1}$, where
	$\alpha_{-2}=\frac{n(n-1)}{(n+2)^2}$.
	By continuity of $s\mapsto\alpha_s$, we have
	$0<\alpha_s<\alpha$ for all $s$ sufficiently close
	to $-2$, so the estimate already proved applies
	to such $s\neq-2$.
	Moreover, since $0\leq y\leq H_t$ in the joint
	minimization,
$$
\left|\widehat{\mathcal{C}}_t\left(\alpha_s, \alpha\right)-\widehat{\mathcal{C}}_t\left(\alpha_{-2}, \alpha\right)\right| \leq H_t^2\left|\chi_{\alpha_s, \alpha}-\chi_{\alpha_{-2}, \alpha}\right| \longrightarrow 0 \,\, \text {as}\,\, s \rightarrow-2 .
$$
	Here the convergence follows from continuity of
	$a\mapsto\chi_{a,\alpha}$ at $a=\alpha_{-2}<\alpha$.
	Positivity of $w$ and compactness of $M$ also give
	$w^s\longrightarrow w^{-2}$ uniformly on $M$.
	Passing to the limit in the already established
	inequality therefore proves \eqref{e1.4} for $s=-2$. 
Finally, for each fixed admissible $s$, the factor
$\int_M w^s|\nabla w|^2\,d\mu_t$ is nonnegative
and independent of $\alpha$. Taking the supremum
over the admissible values of $\alpha$ therefore
gives the optimized estimate.
\end{proof}

We next evaluate the joint minimum in \eqref{eq:1.3} explicitly and determine when it strictly improves the bound obtained by estimating the correction term $-\chi_{a,\alpha}y^2$ separately.
\begin{proposition}\label{p1.3}
	For
	$
	0<a<\alpha<\frac{n}{n-1}, \lambda>0,$ and $L \geq 0,$ set
	$$
	l:=\sqrt{\lambda}, \quad A_0:=\frac{d_\alpha}{n}, \quad B_0:=\frac{\alpha}{n}, \quad C_0:=\frac{\alpha}{n}-\chi_{a, \alpha},
	$$
	and
	$
	\Delta_0:=C_0-\frac{B_0^2}{A_0}=\frac{\alpha(1-\alpha)}{d_\alpha}-\chi_{a, \alpha}. 
	$
	A minimizing pair in \eqref{eq:1.3} is
\begin{align}\label{o3.14}
\left(x_*, y_*\right)= \begin{cases}\left(l, \min \left\{L, \frac{B_0 l}{C_0}\right\}\right), & \Delta_0 \geq 0, \vspace{0.3cm}\\
	\left(\max \left\{l, \frac{B_0 L}{A_0}\right\}, L\right), & \Delta_0<0 .\end{cases}
\end{align}
	In the case $\Delta_0 \geq 0$, we have $C_0>0$, so the
	expression defining $y_*$ is well-defined. Moreover, in both
	cases $\Delta_0 \geq 0$ and $\Delta_0<0$, the minimum value
	$\widehat{\Psi}_{\alpha,a}(\lambda,L)$ is attained at the
	corresponding pair $(x_*,y_*)$ and is given by 
	$\widehat{\Psi}_{\alpha,a}(\lambda,L)
	=A_0x_*^2-2B_0x_*y_*+C_0y_*^2.$
	We now introduce the quantity
	$$
	\Psi_\alpha(\lambda, L):=\min _{\substack{x \geq \sqrt{\lambda} \\ 0 \leq y \leq L}}\left\{\frac{d_\alpha}{n} x^2-\frac{2 \alpha}{n} x y+\frac{\alpha}{n} y^2\right\}.
	$$
	Then, we obtain
	\begin{equation}\label{t1.6}
		\widehat\Psi_{\alpha,a}(\lambda,L)
		\geq\Psi_\alpha(\lambda,L)-\chi_{a,\alpha}L^2.
	\end{equation}
	The inequality is strict precisely when $\alpha<1$ and
	$L>\sqrt\lambda$.
\end{proposition}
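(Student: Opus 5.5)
The plan is to complete the square in two different ways, according to the sign of $\Delta_0$, and to reduce the two-variable problem to a one-variable monotonicity statement. First I record the signs of the coefficients. Since $\alpha<\frac{n}{n-1}$ we have $d_\alpha>0$, so $A_0>0$. A direct computation gives $B_0^2/A_0=\frac{\alpha^2}{n d_\alpha}$ and $d_\alpha-\alpha=n(1-\alpha)$, which yields the stated formula $\Delta_0=\frac{\alpha(1-\alpha)}{d_\alpha}-\chi_{a,\alpha}$. If $\Delta_0\ge0$, then $C_0\ge B_0^2/A_0>0$. Write $q(x,y):=A_0x^2-2B_0xy+C_0y^2$ on the closed, unbounded region $\{x\ge l,\ 0\le y\le L\}$. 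In either case $q\to\infty$ as $x\to\infty$ uniformly in $y\in[0,L]$, so the minimum exists.

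\emph{Case $\Delta_0\ge0$.} I write
$$q=C_0\Bigl(y-\tfrac{B_0x}{C_0}\Bigr)^2+\tfrac{A_0\Delta_0}{C_0}x^2 .$$
For fixed $x$, the optimal $y$ is $\min\{L,B_0x/C_0\}$, and the resulting value $g(x)$ is as follows. When $B_0x/C_0\le L$, it equals $\frac{A_0\Delta_0}{C_0}x^2$, which is nondecreasing. When $x>C_0L/B_0$, it equals $A_0x^2-2B_0xL+C_0L^2$. The derivative of the latter is $2(A_0x-B_0L)$, and $A_0x-B_0L>L(A_0C_0-B_0^2)/B_0\ge0$. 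Hence $g$ is nondecreasing on $[l,\infty)$, so $x_*=l$ and $y_*=\min\{L,B_0l/C_0\}$.

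\emph{Case $\Delta_0<0$.} I instead write
$$q=A_0\Bigl(x-\tfrac{B_0y}{A_0}\Bigr)^2+\Delta_0y^2 .$$
For fixed $y$, the optimal $x$ is $\max\{l,B_0y/A_0\}$, and the resulting value $h(y)$ behaves as follows. For $y\ge A_0l/B_0$, it equals $\Delta_0y^2$, which is decreasing. For $y\le A_0l/B_0$, it equals $q(l,y)$, with $\partial_yq(l,y)=-2B_0(l-B_0y/A_0)+2\Delta_0y<0$. So $h$ is decreasing on $[0,L]$, giving $y_*=L$ and $x_*=\max\{l,B_0L/A_0\}$. In both cases the minimum value is $q(x_*,y_*)$.

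For \eqref{t1.6}, put $\Phi(x,y):=q+\chi_{a,\alpha}y^2$, the unhatted objective. Since $\chi_{a,\alpha}>0$ and $y\le L$, we have $q\ge\Phi-\chi_{a,\alpha}L^2\ge\Psi_\alpha-\chi_{a,\alpha}L^2$ pointwise, and \eqref{t1.6} follows. The main obstacle is the strictness characterization, which I will settle by applying the two-case analysis above to $\Phi$ itself (i.e.\ with $\chi$ replaced by $0$). Equality holds iff some minimizer of $q$ has $y=L$ and simultaneously minimizes $\Phi$.

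\emph{Subcase $\alpha\ge1$.} Here both $\Phi$ and $q$ have nonpositive discriminant and hence $\Delta_0<0$. For $\alpha>1$, the second case applied to both gives the common minimizer $(\max\{l,B_0L/A_0\},L)$, so equality holds. For $\alpha=1$, we have $\Phi=\frac1n(x-y)^2$, which is also minimized at that point, so equality again holds.

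\emph{Subcase $\alpha<1$.} Now $\Phi$ has positive discriminant, and its coefficient of $y^2$ equals $B_0$, so by the first case its unique minimizer is $(l,\min\{L,l\})$. If $L\le l$, this point has $y=L$, hence $\widehat\Psi\le q(l,L)=\Psi_\alpha-\chi_{a,\alpha} L^2$, giving equality. If $L>l$, the unique $\Phi$-minimizer has $y=l<L$. At the attained minimizer of $q$, therefore, either $y<L$ or $\Phi>\Psi_\alpha$, and in either event the inequality is strict. This proves that strict inequality occurs exactly when $\alpha<1$ and $L>\sqrt\lambda$.
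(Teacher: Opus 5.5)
Your proof is correct and follows essentially the same route as the paper: you minimize the quadratic form one variable at a time, choosing the order of minimization according to the sign of $\Delta_0$. You then observe that equality in \eqref{t1.6} holds exactly when the unhatted objective $\Phi$ has a minimizer on the edge $y=L$. The only point to tighten is the word ``unique'' in the subcase $\alpha<1$, which requires strict monotonicity of $g$; this follows from your first-case computation because the discriminant $\alpha(1-\alpha)/d_\alpha$ of $\Phi$ is then strictly positive.
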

Proposition \ref{p1.3} follows by minimizing the quadratic form first
in $x$ and then in $y$, and observing that equality in
\eqref{t1.6} holds precisely when $\Psi_\alpha(\lambda,L)$
admits a minimizer with $y=L$.
\subsection{Proof of Theorem \ref{main2}}

\quad\quad In this subsection, we prove the modified Bakry-\'{E}mery $\Gamma_{2, V}$ inequality \eqref{e1.10} of Theorem \ref{main2} by adapting the argument used for Theorem \ref{main1}.

\begin{proof}[Proof of Theorem \ref{main2}]
	Fix $t\in[0,T]$ and
	$s\in\left(-\infty,-\frac{2(2n^2+1)}{4n-1}\right)\cup(2,\infty).$ Observe that
	 $\beta_s \in\left(0, \frac{n}{n-1}\right).$ Set 
	$q:=\frac{2}{s+2},
	r:=q-1=-\frac{s}{s+2},$ and
	$\tau:=\frac{s-2}{s+2}.$
	Notice that $\tau=-(2r+1).$ For $s \in\left(-\infty,-\frac{2\left(2 n^2+1\right)}{4 n-1}\right) \cup(2, \infty),$ one has $\tau>0$.
	Let $u:=w^{\frac{s+2}{2}},$
	so that $w=u^q=u^{r+1}.$
	Since 
	$qs=\frac{2s}{s+2}=-2r,$ 
	we have 
	$w^s=u^{-2r}.$
	Notice that $\nabla w=q u^r \nabla u$ and
	$$
	\nabla^2 w=q u^r\left(\nabla^2 u+r \frac{d u \otimes d u}{u}\right),\quad A_t w=q u^r\left(A_t u+r \frac{|\nabla u|^2}{u}\right).
	$$
	These identities, combined with the fact that $w^s=u^{-2r}$, yield that 
	\begin{align}\label{e3.25}
		\int_M w^s|\nabla w|^2 d\mu_t=q^2 \int_M|\nabla u|^2 d\mu_t,
	\end{align}
	and
	\begin{align}\label{w3.19}
		&\int_M w^s\left[\Gamma_{2, V}(w)-w^{-1}|\nabla w|^2 A_t w\right] d \mu_t
		\nonumber\\
		&\quad\quad=
		q^2 \int_M\left[\left|\nabla^2 u+r \frac{d u \otimes d u}{u}\right|^2+\Ric_V(\nabla u, \nabla u)-q \frac{|\nabla u|^2}{u}\left(A_t u+r \frac{|\nabla u|^2}{u}\right)\right] d\mu_t .
	\end{align}
	From this, \eqref{e2.17} and \eqref{q3.4}, one finds that
	$$\begin{aligned}
		\int_M w^s\left[\Gamma_{2, V}(w)-w^{-1}|\nabla w|^2 A_t w\right] d \mu_t  =q^2\left[\int_M\left[\left|\nabla^2 u\right|^2+\Ric_V(\nabla u, \nabla u)\right] d\mu_t+\tau \int_M \frac{|\nabla u|^2 A_t u}{u} d\mu_t\right].
	\end{aligned}
	$$
	From this and \eqref{e3.25}, we see that to prove Theorem \ref{main2}, it is enough to estimate the integral
	$$
	\mathcal{J}=\int_M\left[\left|\nabla^2 u\right|^2+\Ric_V(\nabla u, \nabla u)\right] d\mu_t+\tau \int_M \frac{|\nabla u|^2 A_t u}{u} d \mu_t.
	$$
Following the same approach as in the proof of Theorem \ref{main1}, we introduce the following notation to streamline the subsequent argument
$$
\mathcal{A}:=\int_M \frac{|\nabla u|^4}{u^2} d\mu_t, \quad E:=\int_M|\nabla u|^2 d\mu_t, \quad 	F:=\langle\nabla\log\rho_t,\nabla u\rangle, \quad 	X:=A_tu.
$$
If $E=0$, then $u$, and hence, $w$ is constant, and the
inequality \eqref{e1.10} is immediate. Thus, we may assume
$E>0$.  By \eqref{h2.1}, we have 
$$
\int_M\left[\left|\nabla^2 u\right|^2+\Ric_V(\nabla u, \nabla u)\right] d\mu_t=\int_M X^2 d \mu_t-\int_M S_t(\nabla u, \nabla u) d \mu_t.
$$
 Therefore, for any $\beta \in\left(\beta_s, \frac{n}{n-1}\right)$, 
we may write
	\begin{align}
		\mathcal J=
		(1-\beta)\int_M X^2\,d\mu_t
		+\beta\int_M|\nabla^2u|^2\,d\mu_t+
		\int_M
		\mathcal{R}_{\beta, t}(\nabla u,\nabla u)\,d\mu_t
		+\tau \int_M \frac{|\nabla u|^2 A_t u}{u} d \mu_t,
	\end{align}
	where $\mathcal{R}_{\beta, t}=\beta \Ric_V+(\beta-1) S_t$. Since $\mathfrak{r}_\beta(t):=\inf _{\substack{x \in M \\|\xi|_g=1}}\left(\mathcal{R}_{\beta, t}\right)_x(\xi, \xi),$
 we imply that
	\begin{equation}\label{e3.28}
		\begin{aligned}
			\mathcal J
			&\geq
			\mathfrak r_\beta(t)E
			+(1-\beta)\int_M X^2\,d\mu_t+
			\beta\int_M|\nabla^2u|^2\,d\mu_t
			+\tau \int_M \frac{|\nabla u|^2 A_t u}{u} d \mu_t.
		\end{aligned}
	\end{equation}
From \eqref{r3.7}, we deduce that
\begin{align}\label{e3.29}
	(1-\beta) X^2+\beta\left|\nabla^2 u\right|^2=\frac{d_\beta}{n} X^2-\frac{2 \beta}{n} X F+\beta\left|\nabla^2 u-\frac{X}{n} g\right|^2.
\end{align}
where 
$d_\beta:=n-(n-1)\beta.$
By \eqref{e2.14}, we get
\begin{align}
		\int_M\frac{|\nabla u|^2A_tu}{u}\,d\mu_t =
	\frac{n}{n+2}\mathcal A
	-\frac{2n}{n+2}
	\int_M\langle\widetilde H_tu,Q(u)\rangle\,d\mu_t
	+\frac{2n}{n+2}D.\nonumber
\end{align}
where $Q(u)$ is defined as in \eqref{w3.1} and
$D:=
\int_M
\frac{|\nabla u|^2 \langle\nabla\log\rho_t,\nabla u\rangle}{u}\,d\mu_t.$ 
	Substituting this identity and \eqref{e3.29}
	into \eqref{e3.28}, we obtain
	\begin{align}\label{e3.32}
		\mathcal J
		&\geq
		\mathfrak r_\beta(t)E
		+\int_M
		\left(
		\frac{d_\beta}{n}X^2-\frac{2\beta}{n}XF
		\right)d\mu_t
		+\beta\int_M
		\left|\nabla^2u-\frac{X}{n}g\right|^2d\mu_t
		\nonumber\\
		&\quad-
		\frac{2n\tau}{n+2}
		\int_M\langle\widetilde H_tu,Q(u)\rangle\,d\mu_t
		+\frac{n\tau}{n+2}\mathcal A
		+\frac{2n\tau}{n+2}D.
	\end{align} 
We now define  
	$$Z:=\nabla^2u-\frac{X}{n}g
	-\frac{n\tau}{(n+2)\beta}Q(u).$$
It follows from \eqref{p3.11} and \eqref{w3.9} that
	\begin{align*}
		\beta\int_M|Z|^2\,d\mu_t
		&=
		\beta\int_M
		\left|\nabla^2u-\frac{X}{n}g\right|^2d\mu_t-
		\frac{2n\tau}{n+2}
		\int_M\langle\widetilde H_tu,Q(u)\rangle\,d\mu_t+
		\frac{2(n-1)\tau}{n+2}D
		+\frac{n(n-1)\tau^2}{(n+2)^2\beta}\mathcal A.
	\end{align*}
Plugging this information in \eqref{e3.32}, we conclude that
	\begin{align}\label{e3.33}
		\mathcal J
		&\geq
		\mathfrak r_\beta(t)E
		+\int_M
		\left(
		\frac{d_\beta}{n}X^2-\frac{2\beta}{n}XF
		\right)d\mu_t
		+\beta\int_M|Z|^2\,d\mu_t+
		K_{s,\beta}\mathcal A
		+\frac{2\tau}{n+2}D,
	\end{align}
	where 
	$$K_{s,\beta}
	:=
	\frac{n\tau}{n+2}
	-\frac{n(n-1)\tau^2}{(n+2)^2\beta}
	=
	\frac{n\tau}{n+2}
	\left(1-\frac{\beta_s}{\beta}\right)
	>0.$$
Using the same arguments as in proof of Theorem \ref{main1}, we conclude that
$$\begin{aligned}
	\mathcal J
&\geq
\left[
\mathfrak r_\beta(t)
+\frac{d_\beta}{n}x^2
-\frac{2\beta}{n}xy
+\left(\frac{\beta}{n}-\chi_{\beta_s,\beta}\right)y^2
\right]E
\geq \left[
\mathfrak r_\beta(t)
+
\widehat\Psi_{\beta,\beta_s}
\bigl(\lambda_1(t),H_t\bigr)
\right]E=
\widehat{\mathcal C}_t(\beta_s,\beta)E,
\end{aligned}$$
	where $
x:=\frac{\|X\|_{L^2\left(M, d \mu_t\right)}}{\sqrt{E}}, y:=\frac{\|F\|_{L^2\left(M, d \mu_t\right)}}{\sqrt{E}}.
$
	Together with \eqref{e3.25} and \eqref{w3.19},
	this proves \eqref{e1.10}.
\end{proof}
\begin{remark}
	The restriction on $s$ ensures that
	$0<\beta_s<\frac{n}{n-1},$
	so that the parameter interval used in the proof is
	nonempty. If
	$-\frac{2(2n^2+1)}{4n-1}<s<-2,$
	then $\beta_s>\frac{n}{n-1}$, and this argument
	provides no admissible choice of $\beta$.
	This obstruction is consistent with the counterexample
	of Cai-Wang \cite[Remark 4.1]{Cai}.
For each such $s$, they construct a smooth positive function $w$ on the unit round sphere $\mathbb S^n$ satisfying
$$
\int_{\mathbb{S}^n} w^s\left[\Gamma_2(w)-\frac{|\nabla w|^2}{w} \Delta w\right] d v<n \int_{\mathbb{S}^n} w^s|\nabla w|^2 dv.
$$
	Thus, the corresponding unweighted inequality cannot
	be extended to this range while retaining the
	coefficient $n$. This counterexample does not,
	by itself, exclude estimates with a smaller constant. At the boundary value
	$s=-\frac{2(2n^2+1)}{4n-1}$, the parameter interval
	also collapses, although the unweighted inequality
	with coefficient $n$ remains valid on $\mathbb S^n$
	by \cite[Theorem 3]{Cai}.
\end{remark}
\section{Entropy estimates and functional inequalities}
\label{s4}

\quad In this section, we derive Shannon and Tsallis entropy estimates and apply them to interpolation inequalities for evolving and stationary measures.
\subsection{Shannon and Tsallis entropy estimates}

\quad A logarithmic rescaling of the inequality in
Theorem \ref{main1} yields the following $\Gamma_2$
estimate associated with Shannon entropy.
\begin{corollary}\label{c4.1}
	Let $(M^n,g)$ be a closed, connected Riemannian manifold
	of dimension $n\geq2$, equipped with a smooth vector field
	$V\in\mathcal C^\infty(TM)$ and the family of probability
	measures $(d\mu_t)_{t\in[0,T]}$ defined by \eqref{k1.3}.
	With $a_n:=\frac{n-1}{n+2}$, define 
$\widehat C_{\mathrm{Sh},V}^{*}(t)
:=
\sup_{\alpha\in\left(a_n,\frac{n}{n-1}\right)}
\widehat{\mathcal C}_t(a_n,\alpha).$
	Then, every positive function $h\in\mathcal C^\infty(M)$ satisfies
	\begin{align}\label{e3.14}
		\int_M h\left[
		|\nabla^2\log h|^2
		+\Ric_V(\nabla\log h,\nabla\log h)
		\right]\,d\mu_t
		\geq
		\widehat C_{\mathrm{Sh},V}^{*}(t)
		\int_M h|\nabla\log h|^2\,d\mu_t
	\end{align}
	for all $t\in[0,T]$.
\end{corollary}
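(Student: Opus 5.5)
The plan is to apply Theorem \ref{main1} to $w_s:=h^{1/s}$ and let $s\longrightarrow-\infty$. Put $\varphi:=\log h$, so that $w_s=e^{\varphi/s}$. Then
\begin{align*}
\nabla w_s=\frac{1}{s}w_s\nabla\varphi,\qquad
\nabla^2 w_s=\frac{1}{s}w_s\Bigl(\nabla^2\varphi+\frac{1}{s}\,d\varphi\otimes d\varphi\Bigr),\qquad
w_s^s=h .
\end{align*}
Multiplying \eqref{e1.4} by $s^2$ gives
\begin{align*}
\int_M h\,w_s^{2}\Bigl[\Bigl|\nabla^2\varphi+\tfrac1s d\varphi\otimes d\varphi\Bigr|^2+\Ric_V(\nabla\varphi,\nabla\varphi)\Bigr]d\mu_t
\geq \widehat{\mathcal C}_t(\alpha_s,\alpha)\int_M h\,w_s^{2}|\nabla\varphi|^2\,d\mu_t .
\end{align*}
This holds for every $\alpha\in(\alpha_s,\frac{n}{n-1})$ and every $s<0$.

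Now let $s\to-\infty$. Since $M$ is compact and $h>0$ is smooth, $\varphi$ is bounded. Hence $w_s^2=e^{2\varphi/s}\to1$ uniformly, and $\frac1s\,d\varphi\otimes d\varphi\to0$ uniformly. Both integrands therefore converge uniformly, and the integrals converge to $\int_M h[|\nabla^2\log h|^2+\Ric_V(\nabla\log h,\nabla\log h)]\,d\mu_t$ and to $\int_M h|\nabla\log h|^2\,d\mu_t$.

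On the parameter side, $\alpha_s=\frac{n(n-1)s}{(n+2)(ns-4)}\to\frac{n-1}{n+2}=a_n$. For $s<0$ one has $\alpha_s<a_n$, because $\frac{ns}{ns-4}<1$ when $ns-4<0$. So any fixed $\alpha\in(a_n,\frac{n}{n-1})$ is admissible for every $s<0$, and we may hold $\alpha$ fixed while $s\to-\infty$.

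It remains to show $\widehat{\mathcal C}_t(\alpha_s,\alpha)\to\widehat{\mathcal C}_t(a_n,\alpha)$. The term $\mathfrak r_\alpha(t)$ does not depend on $s$. In \eqref{eq:1.3}, $\alpha_s$ enters only through $\chi_{\alpha_s,\alpha}$, and $a\mapsto\chi_{a,\alpha}$ is continuous at $a=a_n<\alpha$. Since $0\le y\le H_t$ in the minimization, the same argument used for $s=-2$ in the proof of Theorem \ref{main1} gives
\begin{align*}
\bigl|\widehat\Psi_{\alpha,\alpha_s}-\widehat\Psi_{\alpha,a_n}\bigr|\le H_t^2\,\bigl|\chi_{\alpha_s,\alpha}-\chi_{a_n,\alpha}\bigr|\longrightarrow0 .
\end{align*}
Passing to the limit yields \eqref{e3.14} with constant $\widehat{\mathcal C}_t(a_n,\alpha)$. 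Since the right-hand integral is nonnegative and independent of $\alpha$, taking the supremum over $\alpha\in(a_n,\frac{n}{n-1})$ gives $\widehat C^*_{\mathrm{Sh},V}(t)$.

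The main point requiring care is this parameter bookkeeping. First, the interval for $\alpha$ must be uniformly admissible along the limit, which holds because $\alpha_s$ increases to $a_n$ from below. Second, $\chi_{\alpha_s,\alpha}$ stays bounded, since $\alpha_s^{-1}-\alpha^{-1}\ge a_n^{-1}-\alpha^{-1}>0$. If the supremum is $+\infty$ or is not attained, the argument still works, because each fixed $\alpha$ is handled separately before taking the supremum.
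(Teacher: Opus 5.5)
Your proof is correct and follows the paper's argument. You apply Theorem \ref{main1} along $s\to-\infty$, multiply by $s^2$, pass to the limit using uniform convergence of the integrands together with $|\widehat{\mathcal C}_t(\alpha_s,\alpha)-\widehat{\mathcal C}_t(a_n,\alpha)|\le H_t^2|\chi_{\alpha_s,\alpha}-\chi_{a_n,\alpha}|$, and then take the supremum over $\alpha$. The only difference is the test family: the paper uses $w_s=1+\frac{\log h}{s}$, whose Hessian is exactly $\frac1s\nabla^2\log h$ but with $w_s^s\to h$ only in the limit, while your $w_s=h^{1/s}$ (the rescaling named in the introduction) gives $w_s^s=h$ exactly at the cost of the harmless factor $w_s^2$ and the $\frac1s\,d\varphi\otimes d\varphi$ correction. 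Your observation that $\alpha_s<a_n$ for every $s<0$ also slightly sharpens the paper's ``for all sufficiently negative $s$''.
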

\begin{proof}[Proof of Corollary \ref{c4.1}]
	Fix $t\in[0,T]$ and
	$\alpha\in\left(a_n,\frac{n}{n-1}\right)$. Observe that 
$
\lim _{s \rightarrow-\infty} \alpha_s=a_n.
$
This shows that $\alpha>\alpha_s$ for all sufficiently negative $s$. 
Let $h\in \mathcal{C}^\infty(M)$ be positive and set 
$\varphi:=-\log h.$
For $s<0$ sufficiently large in absolute value, define 
$w_s:=1-\frac{\varphi}{s}.$
Since $M$ is compact, $w_s>0$ for all sufficiently negative $s$.
Moreover,
$\nabla w_s=-\frac{1}{s} \nabla \varphi, \nabla^2 w_s=-\frac{1}{s} \nabla^2 \varphi.$
	Applying Theorem \ref{main1} to $w_s$ and multiplying
	by $s^2$, we obtain
$$
\int_M w_s^s\left[\left|\nabla^2 \varphi\right|^2+\Ric_V(\nabla \varphi, \nabla \varphi)\right] d \mu_t \geq \widehat{\mathcal{C}}_t\left(\alpha_s, \alpha\right) \int_M w_s^s|\nabla \varphi|^2 d \mu_t.
$$
As $s\longrightarrow-\infty$, we see that
	$
	w_s^s=\left(1-\frac{\varphi}{s}\right)^s \longrightarrow e^{-\varphi}=h
	$
	uniformly on $M$.  
	In the definition of $\widehat\Psi_{\alpha,a}$,
	the dependence on $a$ occurs only through the term
	$-\chi_{a,\alpha}y^2$. Since $0\leq y\leq H_t$
	in the corresponding minimization problem, we have
	$\left|\widehat{\mathcal{C}}_t\left(\alpha_s, \alpha\right)-\widehat{\mathcal{C}}_t\left(a_n, \alpha\right)\right| \leq H_t^2\left|\chi_{\alpha_s, \alpha}-\chi_{a_n, \alpha}\right| .
	$
	Since $\alpha>a_n$, the right-hand side tends to
	zero as $s\longrightarrow-\infty$, which shows that
	$\widehat{\mathcal C}_t(\alpha_s,\alpha)
	\longrightarrow
	\widehat{\mathcal C}_t(a_n,\alpha).$  Therefore, we get
	\begin{align}
		\int_M h
		\left[
		|\nabla^2\log h|^2
		+\Ric_V(\nabla\log h,\nabla\log h)
		\right]\,d\mu_t
		\geq
		\widehat{\mathcal C}_t(a_n,\alpha)
		\int_M h|\nabla\log h|^2\,d\mu_t.
	\end{align}
Since $\alpha$ is arbitrary, taking the supremum yields
\eqref{e3.14} for every positive smooth $h$.
\end{proof}
\begin{remark}
	In the unweighted case $V=0, d \mu_t=\frac{d v}{\operatorname{Vol}_g(M)}$, 
	the reference density is constant, and hence, we get
$
A_t=\Delta, H_t=0, S_t=0,$ and $\lambda_1(t)=\lambda_1,
$
	where $\lambda_1$ denotes the first positive eigenvalue
	of $-\Delta$. 
	Assume that $\Ric\geq(n-1)g$. Since $S_t=0$ and
	$\Ric_V=\Ric$, the definition of $\mathfrak r_\alpha(t)$
	gives
	$\mathfrak r_\alpha(t)
	\geq
	\alpha(n-1).$
	Moreover, the constraint $0\leq y\leq H_t=0$
	forces $y=0$ in the joint minimization. Therefore,
	for every $\alpha\in\left(a_n,\frac{n}{n-1}\right)$, we have 
$
\widehat{\Psi}_{\alpha, a_n}\left(\lambda_1, 0\right)=\min _{x \geq \sqrt{\lambda_1}} \frac{d_\alpha}{n} x^2=\frac{d_\alpha}{n} \lambda_1,
$
	because $d_\alpha=n-(n-1)\alpha>0$. It follows that
	$$
	\widehat{\mathcal{C}}_t\left(a_n, \alpha\right) \geq\left(1-\frac{n-1}{n} \alpha\right) \lambda_1+\alpha(n-1) .
	$$
	The optimized coefficient is independent of $t$
	in this setting, and we denote it by
	$\widehat C_{\mathrm{Sh},0}^{*}$. Letting
	$\alpha\longrightarrow a_n=\frac{n-1}{n+2}$ yields 
	$\widehat C_{\mathrm{Sh},0}^{*}
	\geq
	\frac{4n-1}{n(n+2)}\lambda_1
	+
	\frac{(n-1)^2}{n+2}.$
	Substitution into \eqref{e3.14} recovers Ji's
	$\Gamma_2$ inequality \cite[Theorem 2.3]{Ji}.
	The normalization of the volume measure does not
	affect the constant, since it multiplies both sides
	of the inequality by the same factor.
	
	For a general reference measure, fix a time $t$
	at which $\rho_t$ is nonconstant on $M$.
	Since $M$ is connected, 
	$H_t>0.$
	In this case, 
	$\chi_{a_n,\alpha}
	=
	\frac{a_n\alpha}
	{n(n-1)(\alpha-a_n)}
	\longrightarrow+\infty$
as $\alpha\longrightarrow a_n.$
	To determine its effect on the joint minimum,
	evaluate the objective function at the admissible
	pair $(x,y)=(\sqrt{\lambda_1(t)},H_t)$. This gives
	\begin{align*}
		\widehat{\mathcal C}_t(a_n,\alpha)
		\leq{}&
		\frac{d_\alpha}{n}\lambda_1(t)
		-\frac{2\alpha}{n}H_t\sqrt{\lambda_1(t)}
		+\frac{\alpha}{n}H_t^2-\chi_{a_n,\alpha}H_t^2
		+\mathfrak r_\alpha(t).
	\end{align*}
	All terms on the right-hand side of the above inequality except
	$-\chi_{a_n,\alpha}H_t^2$ remain bounded as
	$\alpha\longrightarrow a_n$. Consequently, 
$\widehat{\mathcal C}_t(a_n,\alpha)
	\longrightarrow-\infty$
as $\alpha\longrightarrow a_n.$
	Thus, this endpoint limit does not provide a finite
	lower bound when $H_t>0$.
\end{remark}

We now prove Theorem \ref{main3} using the
forward-backward duality and the fixed-time spectral
$\Gamma_2$ estimates \eqref{e1.10}.

\begin{proof}[Proof of Theorem \ref{main3}]
		Set 
	$W_t:=V-\nabla\log\rho_t.$
	The equation \eqref{q1.2} becomes
	$\partial_t\rho_t=-\operatorname{div}(\rho_tW_t).$
	Since $M$ is closed, from the integration by parts, we deduce that
	\begin{align}\label{w4.6}
		\int_M F \Delta_V H d \mu_t=-\int_M\langle\nabla F, \nabla H\rangle d \mu_t+\int_M F\left\langle W_t, \nabla H\right\rangle d \mu_t
	\end{align}
	for all smooth functions $F$ and $H$. Moreover, we have
	\begin{align}\label{w4.7}
		\int_M F (\partial_t \rho_t )d v=-\int_M F \operatorname{div}\left(\rho_t W_t\right) d v=\int_M\left\langle W_t, \nabla F\right\rangle d \mu_t.
	\end{align}
	For every smooth time-dependent function $\psi$, one finds that
	\begin{align}\label{w4.15}
		\frac{d}{dt}\int_M\psi\,d\mu_t
		&=
		\int_M
		\left[\partial_t\psi+\langle W_t,\nabla\psi\rangle\right]
		\,d\mu_t=
		\int_M(\partial_t\psi+\Delta_V\psi)\,d\mu_t.
	\end{align}
By \eqref{w4.15} and $\partial_tu+\Delta_Vu=0$,
we have $\frac{d}{dt}\int_Mu\,d\mu_t=0$.
This implies that 
$$\int_Mu(\cdot,t)\,d\mu_t
=
\int_Mu_T\,d\mu_T
=1,
\quad t\in[0,T].$$

	We now consider the case $p\neq 1$. Using \eqref{w4.15}  and $\partial_tu+\Delta_Vu=0$, we obtain
	\begin{align}\label{t4.16}
		\frac{d}{dt}T_{p,V}(u,t)
		&=
		\frac{1}{1-p}
		\int_M
		\left[\partial_t(u^p)+\Delta_V(u^p)\right]d\mu_t
=
		-p\int_Mu^{p-2}|\nabla u|^2\,d\mu_t
		:=
		-I_p(t),
	\end{align}
	where 
	\vspace{-0.1cm}
	$$ I_p(t):=
	p\int_Mu^{p-2}|\nabla u|^2\,d\mu_t\geq 0.$$ We first consider 
	$p\in \left(\frac{2(n-1)^2}{2n^2+1}, 2\right)\setminus\{1\}.$ Set 
	$s:=\frac{2}{p-1},
	U:=u^{\frac{p-1}{2}}=u^{\frac1s}.$
	Then, $U>0$, $u=U^s$, and $s(p-1)=2$.
	If $\frac{2(n-1)^2}{2 n^2+1}<p<1$, then 
	$s<-\frac{2(2n^2+1)}{4n-1},$
	whereas $1<p<2$ gives $s>2$.
	Thus, $s$ belongs to the admissible range of
	Theorem \ref{main2}. Moreover,
	\begin{equation}\label{p4.17}
	\beta_s=\frac{n-1}{n+2} \frac{s-2}{s+2}=\frac{n-1}{n+2}\left(\frac{2}{p}-1\right)=\beta_p.
	\end{equation}
	Since $\nabla u=sU^{s-1}\nabla U$, we have
		$u^{p-2}|\nabla u|^2
	=s^2U^s|\nabla U|^2.$
	Consequently, writing 
	$\mathcal F(t):=
	\int_MU^s|\nabla U|^2\,d\mu_t,$
	we get
	\begin{equation}\label{eq:4.3}
		I_p(t)=ps^2\mathcal F(t),
		\quad
		\frac{d}{dt}T_{p,V}(u,t)
		=
		-ps^2\mathcal F(t).
	\end{equation}
	Set $G:=|\nabla U|^2$. By
	\eqref{w4.15} and the product rule,
	\begin{align}\label{e4.3}
		\mathcal F'(t)
		=
		\frac{d}{dt}\int_MuG\,d\mu_t
=
		\int_Mu
		\left[
		\Delta_VG
		+
		2\langle\nabla U,\nabla\partial_tU\rangle
		\right]d\mu_t
		+
		2\int_M\langle\nabla u,\nabla G\rangle\,d\mu_t,
	\end{align}
	where we used the fact that $\partial_tu+\Delta_Vu=0$. 
The equation 
$\partial_t U
=
-\Delta_V U
-(s-1)U^{-1}|\nabla U|^2,$
together with the Bochner-Weitzenb\"ock identity
gives that
	\begin{align}\label{e4.4}
		\Delta_VG
		+
		2\langle\nabla U, (\nabla\partial_tU)\rangle
		&=
		2\Gamma_{2,V}(U)
		-
		2(s-1)U^{-1}\langle\nabla U,\nabla G\rangle
		+
		2(s-1)U^{-2}G^2.
	\end{align}
	Substituting this into \eqref{e4.3} and using
	$\nabla u=sU^{s-1}\nabla U$, we find that
	\begin{align}\label{e4.5}
		\mathcal F'(t)
		&=
		2\int_MU^s\Gamma_{2,V}(U)\,d\mu_t
		+
		2\int_M
		U^{s-1}\langle\nabla U,\nabla G\rangle\,d\mu_t+2(s-1)\int_MU^{s-2}G^2\,d\mu_t.
	\end{align}
	Note that
	\begin{align}\label{e4.6}
		\int_MU^{s-1}G A_tU\,d\mu_t
		=
		-\int_M
		U^{s-1}\langle\nabla G,\nabla U\rangle\,d\mu_t
		-
		(s-1)\int_MU^{s-2}G^2\,d\mu_t.
	\end{align}
	Using this identity in \eqref{e4.5}, we conclude that
	\begin{equation}\label{e4.7}
		\mathcal F'(t)
		=
		2\int_MU^s
		\left[
		\Gamma_{2,V}(U)
		-
		\frac{|\nabla U|^2}{U}A_tU
		\right]d\mu_t.
	\end{equation}
	This, \eqref{eq:4.3} and \eqref{e4.7} lead to
	\begin{equation}\label{e4.41}
		\frac{d^2}{dt^2}T_{p,V}(u,t)
		=
		-2ps^2
		\int_MU^s
		\left[
		\Gamma_{2,V}(U)
		-
		\frac{|\nabla U|^2}{U}A_tU
		\right]d\mu_t.
	\end{equation}
	For each fixed $t$ and
	$\beta\in\left(\beta_p,\frac{n}{n-1}\right)$,
	Theorem \ref{main2} and
	\eqref{p4.17} imply that
$$
\int_M U^s\left[\Gamma_{2, V}(U)-\frac{|\nabla U|^2}{U} A_t U\right] d \mu_t \geq \widehat{\mathcal{C}}_t\left(\beta_p, \beta\right) \mathcal{F}(t).
$$
	Since $\mathcal F(t)\geq0$, taking the supremum over
	$\beta$ and using \eqref{eq:4.3}  and  \eqref{e4.41}
	yields that
	\begin{align*}
		\frac{d^2}{dt^2}T_{p,V}(u,t)
		&\leq
		-2ps^2\widehat\Lambda_{p,V}^{*}(t)\mathcal F(t)=
		2\widehat\Lambda_{p,V}^{*}(t)
		\frac{d}{dt}T_{p,V}(u,t).
	\end{align*}
	This proves \eqref{e1.7} for
	$p\in \left(\frac{2(n-1)^2}{2n^2+1}, 2\right)\setminus\{1\}$. 
	
	For $p=2$, we have
	$$T_{2,V}(u,t)=1-\int_Mu^2\,d\mu_t,
	  \quad
	I_2(t):=2\int_M|\nabla u|^2\,d\mu_t.$$
Using \eqref{t4.16}, we deduce that 
	$$\frac{d}{dt}T_{2,V}(u,t)=-I_2(t).$$
	Since $\partial_tu+\Delta_Vu=0$, the Bochner-Weitzenb\"ock identity
shows that
$$
\left(\partial_t+\Delta_V\right)|\nabla u|^2=2 \Gamma_{2, V}(u)+2\left\langle\nabla u, \nabla\left(\partial_t u+\Delta_V u\right)\right\rangle=2 \Gamma_{2, V}(u) .
$$
	Applying \eqref{w4.15},
	we obtain
$$
I_2^{\prime}(t)=2 \int_M\left(\partial_t+\Delta_V\right)|\nabla u|^2 d \mu_t=4 \int_M \Gamma_{2, V}(u) d \mu_t
$$
This and \eqref{p1.13} entail that
$$
I_2^{\prime}(t) \geq 4 \ell_t \int_M|\nabla u|^2 d \mu_t=2 \ell_t I_2(t).
$$
	Consequently,
	\[
	\frac{d^2}{dt^2}T_{2,V}(u,t)
	-
	2\ell_t\frac{d}{dt}T_{2,V}(u,t)
	=
	-I_2'(t)+2\ell_t I_2(t)
	\leq0.
	\]
	Since $\widehat\Lambda_{2,V}^*(t)=\ell_t$,
	this proves \eqref{e1.7} at $p=2$.
	
	For $p=1$, set
	\[
	\varphi:=\log u,
	  \quad
	I_1(t):=\int_Mu|\nabla\varphi|^2\,d\mu_t.
	\]
	Since $(\partial_t+\Delta_V)u=0$, the chain rule gives
	$(\partial_t+\Delta_V)\varphi=-|\nabla\varphi|^2$.
	The product rule and the Bochner-Weitzenb\"ock identity then yield that
	\begin{align*}
		(\partial_t+\Delta_V)(u\log u)
		=u|\nabla\varphi|^2,\quad
		(\partial_t+\Delta_V)\bigl(u|\nabla\varphi|^2\bigr)
		&=2u\Gamma_{2,V}(\varphi).
	\end{align*}
	Applying \eqref{w4.15}, we get 
	$$\frac{d}{dt}\mathcal S_V(u,t)=-I_1(t),
	  \quad
	I_1'(t)
	=
	2\int_Mu\Gamma_{2,V}(\log u)\,d\mu_t.$$
	Applying Corollary \ref{c4.1} to $u(\cdot,t)$, we conclude that
	$$I_1'(t)\geq
	2\widehat C_{\mathrm{Sh},V}^*(t)I_1(t).$$
	Consequently,
	\[
	\frac{d^2}{dt^2}\mathcal S_V(u,t)
	-
	2\widehat C_{\mathrm{Sh},V}^*(t)
	\frac{d}{dt}\mathcal S_V(u,t)
	=
	-I_1'(t)
	+2\widehat C_{\mathrm{Sh},V}^*(t)I_1(t)
	\leq0.
	\]
	Since $T_{1,V}=\mathcal S_V$ and
	$\widehat\Lambda_{1,V}^*(t)
	=\widehat C_{\mathrm{Sh},V}^*(t)$,
	this proves \eqref{e1.7} at $p=1$.
\end{proof}
\begin{remark}
	The differential inequality \eqref{e1.7} also holds
	for terminal data of arbitrary positive mass.
	Indeed, if $m:=\int_Mu_T\,d\mu_T>0$, the
	forward-backward duality gives
	$\int_Mu(\cdot,t)\,d\mu_t=m$ for all $t$.
	Thus, $v:=\frac{u}{m}$ solves the same backward equation
	and has unit mass. 
	For $p\neq1$, the definition of the Tsallis entropy
implies that
	$$T_{p,V}(u,t)
	=
	m^pT_{p,V}(v,t)
	+\frac{m^p-1}{1-p},$$
	whereas 
	$\mathcal S_V(u,t)
	=
	m\mathcal S_V(v,t)-m\log m.$
	Since $m$ is constant in time, both relations yield 
	$$\frac{d^j}{dt^j}T_{p,V}(u,t)
	=
	m^p\frac{d^j}{dt^j}T_{p,V}(v,t),
	 \quad j=1,2,$$
	with $T_{1,V}=\mathcal S_V$.
	Multiplying the inequality for $v$ by $m^p>0$
	therefore proves \eqref{e1.7} for $u$.
\end{remark}
\subsection{Interpolation inequalities}

\quad We now prove Corollary \ref{coro1.8} by combining the entropy
estimate in Theorem \ref{main3} with the initial
interpolation inequality \eqref{k1.16}.
\begin{proof}[Proof of Corollary \ref{coro1.8}]
	Fix
	$q\in\left[1,\frac{2n^2+1}{(n-1)^2}\right), 
	p:=\frac{2}{q},$
and
	$D(t):=\widehat{\mathcal D}_{q,V}^{*}(t).$ 
	Then, we see that
	$p\in\left(\frac{2(n-1)^2}{2n^2+1},2\right]$
and
	$D(t)=\widehat\Lambda_{p,V}^{*}(t).$
	Indeed, $\beta_p=b_q$ when $q>1$, while $q=1$
	corresponds to $p=2$ and $D(t)=\ell_t$.
	In particular, 
	$q=2,  p=1,$ and
	$D(t)=\widehat C_{\mathrm{Sh},V}^{*}(t).$ 
	
For $q=1$, \eqref{eq:1.11} makes $D=\ell_t$ upper
semicontinuous, while the spectral bounds proved in
Corollary~\ref{c2.2} ensure uniform boundedness on $[0,T]$. 
For $q>1$, the variational characterization of
$\lambda_1(t)$ and smoothness and positivity of $\rho_t$
give continuity of $\lambda_1(t)$, $H_t$, and
$\mathfrak r_\beta(t)$.
By Proposition~\ref{p1.3}, each
$t\mapsto\widehat{\mathcal C}_t(b_q,\beta)$ is continuous.
Thus, $D$ is lower semicontinuous, and any fixed
admissible $\beta_0$ gives 
$D(t)\geq
\min_{r\in[0,T]}
\widehat{\mathcal C}_r(b_q,\beta_0)>-\infty.$
Hence, $D$ is Borel measurable and uniformly bounded
below in both cases.
	
Fix a positive nonconstant
	$h_*\in\mathcal C^\infty(M)$.
	For $1<q<\frac{2 n^2+1}{(n-1)^2}, q \neq 2$, set $s=\frac{2 q}{2-q}$. For $q=1$, we use instead the variational characterization of $\ell_t$.
	Then, $s$ lies in the admissible range of
	Theorem \ref{main2}, and $\beta_s=b_q$.
	Applying Theorem \ref{main2} to $h_*$ and taking the
	supremum over $\beta$, we obtain 
\begin{align}
D(t)\leq
\frac{
	\displaystyle
	\int_Mh_*^s
	\left[
	\Gamma_{2,V}(h_*)
	-\frac{|\nabla h_*|^2}{h_*}A_th_*
	\right]\,d\mu_t
}{
	\displaystyle
	\int_Mh_*^s|\nabla h_*|^2\,d\mu_t
}.
\end{align}
For $q=2$, applying Corollary~\ref{c4.1} to the fixed
positive nonconstant function $h_*$ likewise provides
a continuous upper bound for $D$ on $[0,T]$. 
	Each quotient is continuous on $[0,T]$ and has a
	strictly positive denominator, since $\rho_t>0$
	and $\nabla h_*\not\equiv0$.
	Compactness therefore supplies a uniform upper
	bound for $D$.
	Thus, $D$ is bounded and Borel measurable in all
	cases, and \eqref{q1.19} defines
	$0<c_{q,T}<\infty$.
	
	Now fix a positive function $w\in\mathcal C^\infty(M)$ and set 
	$Z_T:=\int_Mw^q\,d\mu_T,
	u_T:=\frac{w^q}{Z_T}.$
	Let $u>0$ be the smooth solution of \eqref{eq:1.9} with terminal
	datum $u_T$.
Using \eqref{w4.15}, we imply that 
	$$\int_Mu(\cdot,t)\,d\mu_t
	=
	\int_Mu_T\,d\mu_T
	=1,
	 \quad 0\leq t\leq T.$$
	Define 
	$$\mathcal E_p(t):=
	\begin{cases}
		\displaystyle
		\frac{	\displaystyle\int_Mu^p\,d\mu_t-1}{p-1},
		& p\neq1,\\[7pt]
		\displaystyle
		\int_Mu\log u\,d\mu_t,
		& p=1,
	\end{cases}
	 \quad
	I_p(t):=
	p\int_Mu^{p-2}|\nabla u|^2\,d\mu_t.$$
	Thus, $\mathcal E_p=-T_{p,V}$, with the convention
	$T_{1,V}=\mathcal S_V$.
From 
	\eqref{e1.7} and \eqref{eq:4.3}, we find that 
	$\mathcal E_p'(t)=I_p(t)$
and
	$I_p'(t)\geq2D(t)I_p(t).$ 
	Since $D$ is bounded and Borel measurable, the function
	$t\mapsto e^{-2\int_0^tD(r)\,dr}I_p(t)$ is
	absolutely continuous, with
	\[
	\frac{d}{dt}
	\left[e^{-2\int_0^tD(r)\,dr}I_p(t)\right]
	=
	e^{-2\int_0^tD(r)\,dr}
	\left[I_p'(t)-2D(t)I_p(t)\right]
	\geq0
	\]
	for almost every $t\in(0,T)$.
	Consequently,
	\begin{equation}\label{r4.29}
		I_p(t)
		\leq
		\exp\left(-2\int_t^TD(r)\,dr\right)I_p(T),
		 \quad 0\leq t\leq T.
	\end{equation}
	Set $u_0:=u(\cdot,0)$ and $v_0:=u_0^{\frac1q}$.
	Then, $\int_Mv_0^q\,d\mu_0=1$.
	For $q\neq2$, the fact that $p=\frac{2}{q}$ gives 
	$\mathcal B_{q,\mu_0}(v_0)
	=
	\frac{\mathcal E_p(0)}{q}.$
	The same identity holds at $q=2$, since 
	$\mathcal B_{2,\mu_0}(v_0)
	=
	\frac12\operatorname{Ent}_{\mu_0}(u_0)
	=
	\frac{\mathcal E_1(0)}{2}.$
	Moreover, for every admissible $q$,
	\[
	\int_M|\nabla v_0|^2\,d\mu_0
	=
	\frac{1}{q^2}
	\int_Mu_0^{p-2}|\nabla u_0|^2\,d\mu_0
	=
	\frac{I_p(0)}{2q}.
	\]
	Applying \eqref{k1.16} to $v_0$, we therefore obtain 
	$\mathcal E_p(0)\leq\frac{I_p(0)}{2c_{q,0}}.$
	Integrating $\mathcal E_p'=I_p$ and using
	\eqref{r4.29}, we conclude that
	\begin{equation}\label{k4.32}
		\begin{aligned}
			\mathcal E_p(T)
			&=
			\mathcal E_p(0)+\int_0^TI_p(t)\,dt\\
			&\leq
			\left[
			\frac{1}{2c_{q,0}}
			\exp\left(-2\int_0^TD(r)\,dr\right)
			+
			\int_0^T
			\exp\left(-2\int_t^TD(r)\,dr\right)\,dt
			\right]I_p(T)=
			\frac{I_p(T)}{2c_{q,T}},
		\end{aligned}
	\end{equation}
	where the last equality follows from \eqref{q1.19}.
	
	At the terminal time,
	$u_T^{\frac1q}=Z_T^{-\frac1q}w$.
	The preceding normalization identities, together
	with the homogeneity
	$\mathcal B_{q,\mu}(cw)=c^2\mathcal B_{q,\mu}(w)$
	for $c>0$, deduce that
$$
\mathcal{E}_p(T)=\frac{q}{Z_T^{\frac{2}{q}}} \mathcal{B}_{q, \mu_T}(w), \quad I_p(T)=\frac{2 q}{Z_T^{\frac{2}{q}}} \int_M|\nabla w|^2 d \mu_T .
$$
	These formulas also hold at $q=2$.
	Substituting them into \eqref{k4.32} and cancelling
	the positive factor $\frac{q}{Z_T^{\frac{2}{q}}}$
	proves \eqref{e1.8}. At $q=2$, the logarithmic Sobolev inequality also holds
	for every real-valued smooth function $w$.
	To see this, apply it to $\sqrt{w^2+\varepsilon}$
	and let $\varepsilon\longrightarrow 0$.
\end{proof}

We now prove Corollary \ref{c1.10}
by specializing Theorem \ref{main3} to the Witten
Laplacian and its invariant probability measure $d\nu_f$.
\begin{proof}[Proof of Corollary \ref{c1.10}]
	We apply Theorem \ref{main3} with 
$V:=-\nabla f, 
\rho:=\frac{e^{-f}}{\int_M e^{-f} d v},$
and
$d\mu_t\equiv d\nu_f.$ 
	Since $\nabla\rho=\rho V$, the density $\rho$ is
	stationary for the Fokker-Planck equation \eqref{q1.2}. Moreover,
$$
A_t=\Delta_f, \quad S_t=0, \quad \Ric_V=\Ric_f, \quad \lambda_1(t)=\lambda_{1, f}, \quad H_t=H:=\|\nabla f\|_{L^{\infty}(M)} \leq L .
$$
	Fix $q\in\left(1,\frac{2n^2+1}{(n-1)^2}\right)$ and set 
	$p:=\frac{2}{q}, 
	D:=\widehat{\mathcal D}_{q,f}^*.$
	Then, $\beta_p=b_q$. Since $\mathfrak r_\beta(t)\geq\beta\kappa$
	and $\widehat\Psi_{\beta,b_q}(\lambda,\cdot)$ is
	nonincreasing, every admissible $\beta$ satisfies
	$$\widehat{\mathcal C}_t(b_q,\beta)
	\geq
	\widehat\Psi_{\beta,b_q}(\lambda_{1,f},L)
	+\beta\kappa.$$
	Thus, $D\leq\widehat\Lambda_{p,V}^*(t)<\infty$,
	while a fixed admissible $\beta$ gives $D>-\infty$. 
	For $q\neq2$, the assertion is immediate if $D\leq0$,
	since $\mathcal B_{q,\nu_f}(w)\geq0$.
	We therefore assume $D>0$, which also covers the
	logarithmic Sobolev assertion at $q=2$. 
	Fix a positive $w\in\mathcal C^\infty(M)$ and set 
	$Z:=\int_Mw^q\,d\nu_f, 
	h_0:=\frac{w^q}{Z}.$
	Let $h$ solve $\partial_\tau h=\Delta_fh$ with
	$h(\cdot,0)=h_0$. Mass conservation and the maximum
	principle give
	\[
	\int_Mh(\cdot,\tau)\,d\nu_f=1,
	\quad
	0<\min_Mh_0\leq h(x,\tau)\leq\max_Mh_0.
	\]
	Define
	\[
	\mathcal E_p(\tau):=
	\begin{cases}
		\displaystyle
		\frac{	\displaystyle\int_Mh^p\,d\nu_f-1}{p-1},
		&p\neq1,\\[6pt]
		\displaystyle
		\int_Mh\log h\,d\nu_f,
		&p=1,
	\end{cases}
	\quad
	I_p(\tau):=
	p\int_Mh^{p-2}|\nabla h|^2\,d\nu_f.
	\]
	Applying the first-variation identity and
	Theorem \ref{main3} to $u_R(x,t):=h(x,R-t)$,
	with arbitrary $R>0$, and reversing time yields 
	$\mathcal E_p'(\tau)=-I_p(\tau)$
and
	$I_p'(\tau)\leq-2D I_p(\tau).$
	Consequently,
	\begin{equation}\label{q4.33}
		I_p(\tau)\leq e^{-2D\tau}I_p(0),
		\quad \tau\geq0.
	\end{equation}
	The spectral gap gives 
	$$\frac{d}{d\tau}\|h-1\|_{L^2(\nu_f)}^2
	=
	-2\int_M|\nabla h|^2\,d\nu_f
	\leq
	-2\lambda_{1,f}\|h-1\|_{L^2(\nu_f)}^2.$$
	Hence, $h(\cdot,\tau)\longrightarrow1$ in $L^2(\nu_f)$.
	The uniform positive bounds on $h$ imply
	$\mathcal E_p(\tau)\longrightarrow0$, since the entropy integrands
	are Lipschitz on $[\min_Mh_0,\max_Mh_0]$.
	Integrating $\mathcal E_p'=-I_p$ and using
	\eqref{q4.33}, we obtain
	\begin{equation}\label{q4.34}
		\mathcal E_p(0)
		=
		\int_0^\infty I_p(\tau)\,d\tau
		\leq
		\frac{I_p(0)}{2D}.
	\end{equation}
	The normalization identities from the proof of
	Corollary \ref{coro1.8}, also valid at $q=2$, give 
	$$\mathcal E_p(0)
	=
	\frac{q}{Z^{\frac{2}{q}}}\mathcal B_{q,\nu_f}(w),
	\quad
	I_p(0)
	=
	\frac{2q}{Z^{\frac{2}{q}}}
	\int_M|\nabla w|^2\,d\nu_f.$$
	Substitution into \eqref{q4.34} yields
	\[
	\int_M|\nabla w|^2\,d\nu_f
	\geq D\,\mathcal B_{q,\nu_f}(w).
	\]
	This proves \eqref{p1.22} for $q\neq2$ and
	\eqref{q1.23} for $q=2$.
	
	Finally, \eqref{e1.6} follows from the variational
	characterization of $\lambda_{1,f}$ applied to
	$w-\int_Mw\,d\nu_f$.
	Sharpness follows by taking $w=K+\psi$, where
	$\psi\not\equiv0$ satisfies
	$-\Delta_f\psi=\lambda_{1,f}\psi$ and
	$K>\|\psi\|_{L^\infty(M)}$.
\end{proof}

\begin{remark}
	In the stationary Witten setting,
	$D_q:=\widehat{\mathcal D}_{q,f}^*$
	is a time-independent lower bound for the Fisher
	coefficient. If $D_q>0$, then \eqref{q1.19} gives 
	$c_{q,T}\geq
	\left[
	\frac{1-e^{-2D_qT}}{D_q}
	+\frac{e^{-2D_qT}}{c_{q,0}}
	\right]^{-1}.$
	The right-hand side converges to $D_q$ as
	$T\longrightarrow\infty$, yielding the stationary interpolation
	inequality. At $q=2$, the Shannon case gives
	\eqref{q1.23} directly. 
	If $f$ is constant and $\Ric\geq(n-1)g$, choose
	$L=0$ and $\kappa=n-1$.
	Then, $\lambda_{1,f}=\lambda_1\geq n$ by
	Lichnerowicz's estimate, so the expression below
	is nonincreasing in $\beta$.
	Taking $\beta\longrightarrow b_q$, we obtain
	\begin{align*}
		D_q
		&=
		\sup_{\beta\in \left(b_q, \frac{n}{n-1}\right)}
		\left[
		\lambda_1+\beta(n-1)
		\left(1-\frac{\lambda_1}{n}\right)
		\right]=
		\left(
		1-\frac{(n-1)^2(q-1)}{n(n+2)}
		\right)\lambda_1
		+\frac{(n-1)^2(q-1)}{n+2}.
	\end{align*}
For $1<q<\frac{2 n^2+1}{(n-1)^2}$ with $q \neq 2$, this recovers the coefficient in
	\cite[Corollary 1.1]{Cai}.
\end{remark}
\section*{Acknowledgments}
 \quad The second author is funded by the Shanghai Institute for Mathematics and Interdisciplinary Sciences (SIMIS) under grant
number SIMIS-ID-2025-AD. 
\section*{Appendix}
\quad The existence, uniqueness, and smoothness of solutions to linear parabolic equations on closed Riemannian manifolds are well established in the literature; see, for instance \cite{Am, CM}. Likewise, the positivity preserving and mass conserving properties of Fokker-Planck equations on compact manifolds are classical \cite{Yos}. For completeness, we provide a proof of these properties in the present setting, together with explicit upper and lower bounds on finite time intervals for the smooth positive probability densities used in our entropy arguments.

\begin{proposition}\label{p4.7}
	Let $(M,g)$ be a closed Riemannian manifold and
	$V\in\mathcal C^\infty(TM)$. Suppose that
	$$
	\rho_0 \in \mathcal{C}^{\infty}(M), \quad \rho_0>0, \quad \int_M \rho_0 d v=1.
	$$
	For every $T>0$, the Cauchy problem
	\begin{equation}\label{e4.35}
		\begin{cases}
			\quad\, \partial_t\rho
			=
			\Delta\rho-\operatorname{div}(\rho V),
			&\text{on }M\times(0,T),\\
			\rho(\cdot,0)=\rho_0,
			&\text{on }M,
		\end{cases}
	\end{equation}
	admits a unique solution
	$\rho\in\mathcal C^\infty(M\times[0,T])$.
Moreover, we have
	\begin{equation}\label{e4.45}
	m_0 e^{-D t} \leq \rho(x, t) \leq M_0 e^{D t}, \quad(x, t) \in M \times[0, T],
	\end{equation}
	and
	\begin{equation}\label{e4.46}
		\int_M\rho(x,t)\,dv=1,
		\quad 0\leq t\leq T,
	\end{equation}
	where $$
	m_0:=\min _M \rho_0, \quad M_0:=\max _M \rho_0, \quad D:=\|\operatorname{div} V\|_{L^{\infty}(M)}.
	$$
\end{proposition}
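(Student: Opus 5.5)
Existence, uniqueness and smoothness follow from standard linear parabolic theory. Expanding the equation gives
\[
\partial_t\rho=\Delta\rho-\langle V,\nabla\rho\rangle-(\operatorname{div}V)\rho=:L\rho ,
\]
a uniformly parabolic operator with smooth, time-independent coefficients on the closed manifold $M$. We then invoke classical results (e.g.\ \cite{Am, CM}): one either constructs the solution by Galerkin approximation or semigroup theory in $H^k(M)$ and bootstraps to $\mathcal C^\infty(M\times[0,T])$, or uses the parametrix and fundamental solution. Uniqueness within $\mathcal C^\infty(M\times[0,T])$ follows from an $L^2$ energy estimate for the difference $\delta$ of two solutions. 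Integrating by parts,
\[
\frac{d}{dt}\int_M\delta^2\,dv=-2\int_M|\nabla\delta|^2\,dv+2\int_M\delta\,\langle V,\nabla\delta\rangle\,dv .
\]
Young's inequality bounds the last term by $\int_M|\nabla\delta|^2\,dv+\|V\|_\infty^2\int_M\delta^2\,dv$, and Gr\"onwall's lemma with $\delta(\cdot,0)=0$ gives $\delta\equiv0$.

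Mass conservation \eqref{e4.46} is immediate. By the divergence theorem,
\[
\frac{d}{dt}\int_M\rho\,dv=\int_M\operatorname{div}(\nabla\rho-\rho V)\,dv=0 .
\]

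For the two-sided bounds \eqref{e4.45}, we apply the weak maximum principle to rescaled functions. For the upper bound, set $\sigma:=e^{-Dt}\rho-M_0$ and suppose $\sup\sigma>0$. For $\varepsilon>0$, consider $\sigma_\varepsilon:=\sigma-\varepsilon t$, and let $(x_0,t_0)$ be its maximum point on the compact set $M\times[0,T]$, with $\sigma_\varepsilon(x_0,t_0)>0$ for $\varepsilon$ small. Since $\sigma(\cdot,0)\le0$, we have $t_0>0$. At this point $\nabla\rho=0$, $\Delta\rho\le0$ and $\partial_t\sigma_\varepsilon\ge0$. On the other hand,
\[
\partial_t\sigma_\varepsilon=e^{-Dt}\bigl(\Delta\rho-\langle V,\nabla\rho\rangle-(\operatorname{div}V+D)\rho\bigr)-\varepsilon .
\]
At $(x_0,t_0)$ we have $\rho>M_0e^{Dt_0}>0$ and $\operatorname{div}V+D\ge0$, so the right-hand side is $\le-\varepsilon<0$, a contradiction. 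The lower bound is symmetric: use $\tau:=e^{Dt}\rho-m_0$ and the minimum of $\tau+\varepsilon t$. Now the zeroth-order coefficient is $D-\operatorname{div}V\ge0$, the minimum point has $\tau<0$, and hence $\rho<m_0e^{-Dt_0}$ there. Note that the sign argument for the lower bound uses $\rho\ge0$ only through the factor $\rho\,(D-\operatorname{div}V)$; one checks that at a negative minimum of $\tau$ this term still has the right sign, because $(D-\operatorname{div}V)\ge0$ multiplies $e^{Dt}\rho$, and the contradiction argument needs $\partial_t\tau\ge$ a quantity $\ge0$.

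The only delicate step is this sign bookkeeping in the minimum-principle argument. I expect it to be the main (minor) obstacle. If it fails to close directly, the fallback is to first prove $\rho\ge0$ via the maximum principle for $e^{\mu t}\rho$ with $\mu$ large, and then redo the comparison. Everything else is routine.
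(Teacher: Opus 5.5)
Your existence, uniqueness and mass-conservation steps are fine. Using Young's inequality on the drift term instead of the paper's integration by parts $2\int_M z\langle V,\nabla z\rangle\,dv=-\int_M(\operatorname{div}V)z^2\,dv$ changes nothing. Your upper bound also closes: $\sigma_\varepsilon>0$ at the maximum forces $\rho>0$ there, hence $-(\operatorname{div}V+D)\rho\le0$.

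\textbf{The lower bound does not close as written.} At a negative minimum $(x_0,t_0)$ of $\tau+\varepsilon t$ you only know $\rho(x_0,t_0)<(m_0-\varepsilon t_0)e^{-Dt_0}$. This says nothing about the sign of $\rho(x_0,t_0)$. There you have $\partial_t(\tau+\varepsilon t)=e^{Dt_0}\bigl(\Delta\rho+(D-\operatorname{div}V)\rho\bigr)+\varepsilon$, and the contradiction needs $\rho(x_0,t_0)\ge0$. So your assertion that this term ``still has the right sign'' is unjustified. In terms of $\tau$ the equation reads $\partial_t\tau=\Delta\tau-\langle V,\nabla\tau\rangle+(D-\operatorname{div}V)\tau+(D-\operatorname{div}V)m_0$. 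The coefficient of $\tau$ is nonnegative, which is the wrong sign for a minimum principle: the weight $e^{Dt}$ pushes in the wrong direction.

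Your fallback is the correct repair, but the exponent has the wrong sign. For $\widetilde\rho=e^{\mu t}\rho$ one has $\partial_t\widetilde\rho=\Delta\widetilde\rho-\langle V,\nabla\widetilde\rho\rangle+(\mu-\operatorname{div}V)\widetilde\rho$. A contradiction at a negative minimum therefore requires $\mu<-D$, i.e.\ the weight $e^{-\mu t}$ with $\mu>D$, not $e^{\mu t}$ with $\mu$ large. Once $\rho\ge0$ is known, your argument gives $\partial_t(\tau+\varepsilon t)\ge\varepsilon>0$ at the minimum, and the lower bound follows.

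The paper sidesteps the issue. It proves a single comparison principle for $\mathcal Ph:=\partial_th-\Delta h+\langle V,\nabla h\rangle+(\operatorname{div}V)h$: if $\mathcal Ph\ge0$ and $h(\cdot,0)\ge0$, then $h\ge0$. The proof uses $e^{-(D+1)t}h$, whose zeroth-order coefficient $\operatorname{div}V+D+1\ge1$ is strictly positive. It then applies this to $\rho-m_0e^{-Dt}$ and $M_0e^{Dt}-\rho$, using
\[
\mathcal P(m_0e^{-Dt})=(\operatorname{div}V-D)m_0e^{-Dt}\le0\le(\operatorname{div}V+D)M_0e^{Dt}=\mathcal P(M_0e^{Dt}).
\]
Because it works with differences, and the barrier defects have signs fixed by $m_0,M_0>0$, the sign of $\rho$ never enters. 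A first-touching-time argument for $\tau+\varepsilon(1+t)$ would also close your direct route, because at the first zero one has $e^{Dt_1}\rho=m_0-\varepsilon(1+t_1)>0$ for small $\varepsilon$.
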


\begin{proof}[Proof of Proposition \ref{p4.7}]
	Fix $T>0$ and write $d:=\operatorname{div}V$.
	The equation \eqref{e4.35} takes the form 
	$$\partial_t\rho
	=
	\Delta\rho-\langle V,\nabla\rho\rangle-d\rho.$$
The principal symbol of the second-order spatial part
is $|\xi|_g^2$, so the equation is uniformly parabolic.
Its coefficients are smooth and independent of time. Since $M$ is compact,
	the coefficients and all their covariant derivatives are
	bounded. The linear parabolic result
	\cite[Proposition 2.3 and Remark 2.4]{CM},
	applied with order parameter $p=1$, therefore gives a
	solution 
	$\rho\in\mathcal C^\infty(M\times[0,T]).$
	This includes smoothness at $t=0$ and $t=T$, and the
	result applies to every prescribed finite $T>0$.
	
 If $\rho_1$ and $\rho_2$ are smooth solutions
	with the same initial data, set 
	$$z:=\rho_1-\rho_2,
	\quad
	E(t):=\int_Mz^2\,dv.$$
	Then,
	$\partial_tz=\Delta z-\operatorname{div}(zV)$
	and $z(\cdot,0)=0$. The integration by parts gives
	\begin{align*}
		E'(t)
		=
		-2\int_M|\nabla z|^2\,dv
		+
		2\int_Mz\langle V,\nabla z\rangle\,dv=
		-2\int_M|\nabla z|^2\,dv
		-
		\int_Md\,z^2\,dv\leq DE(t).
	\end{align*}
	Since $E(0)=0$, the Gronwall's inequality implies
	$E(t)=0$ for all $t\in[0,T]$, and hence
	$\rho_1=\rho_2$.
	
	We now introduce 
	$\mathcal P h
	:=
	\partial_th-\Delta h+\langle V,\nabla h\rangle+dh,$
	so that $\mathcal P\rho=0$.
	We first show that if
	$h\in\mathcal C^\infty(M\times[0,T])$ satisfies
	$\mathcal P h\geq0$ on $M\times[0,T]$ and
	$h(\cdot,0)\geq 0$ on $M$, then
	\begin{equation}\label{h4.38}
		h\geq0
		\,\,\text{on }M\times[0,T].
	\end{equation}
	Set $\widetilde h:=e^{-(D+1)t}h$. Observe that
	\begin{equation}\label{q4.39}
		\begin{aligned}
			&\partial_t\widetilde h-\Delta\widetilde h
			+\langle V,\nabla\widetilde h\rangle
			+(d+D+1)\widetilde h
			=e^{-(D+1)t}\mathcal P h
			\geq0.
		\end{aligned}
	\end{equation}
	Here $d=\operatorname{div}V$ and
	$D=\|d\|_{L^\infty(M)}$, so $d+D+1\geq1$. 
	Suppose that $\widetilde h$ is negative somewhere.
	By continuity and compactness, there exists
	$(x_*,t_*)\in M\times[0,T]$ such that 
	$\widetilde h(x_*,t_*)
	=
	\min_{(x,t)\in M\times[0,T]}
	\widetilde h(x,t)
	<0.$
	Since $\widetilde h(\cdot,0)=h(\cdot,0)\geq0$,
	we have $0<t_*\leq T$.
	At this minimum point,
	$\nabla\widetilde h(x_*,t_*)=0,
	\Delta\widetilde h(x_*,t_*)\geq0,$ and $\partial_t\widetilde h(x_*,t_*)\leq 0.$
	Using these inequalities in \eqref{q4.39} gives 
	$0
	\leq
	\bigl(d(x_*)+D+1\bigr)\widetilde h(x_*,t_*)
	<0,$
	a contradiction.
	Thus, $\widetilde h\geq0$, and hence
	$h=e^{(D+1)t}\widetilde h\geq0$.
	This proves \eqref{h4.38}.

	We next consider the following functions
	$\rho_-(t):=m_0e^{-Dt}$
and
	$\rho_+(t):=M_0e^{Dt}.$
	A direct calculation gives
	\[
	\mathcal P\rho_-
	=
	(d-D)\rho_-\leq0,
	\quad
	\mathcal P\rho_+
	=
	(d+D)\rho_+\geq0.
	\]
	Consequently,
	$\mathcal P(\rho-\rho_-)\geq0,
	\mathcal P(\rho_+-\rho)\geq0,$
	and both differences are nonnegative at $t=0$.
	Applying \eqref{h4.38} yields 
	$\rho_-(t)\leq\rho(x,t)\leq\rho_+(t),$
	which proves \eqref{e4.45}.
	In particular, 
	$\rho(x,t)\geq m_0e^{-DT}>0$
on $M\times[0,T].$
	Finally, we see that
	\begin{align*}
		\frac{d}{dt}\int_M\rho(x,t)\,dv
	=
		\int_M\left[
		\Delta\rho-\operatorname{div}(\rho V)
		\right]\,dv=
		\int_M\operatorname{div}(\nabla\rho-\rho V)\,dv=0,
	\end{align*}
	because $M$ is closed. Together with the initial
	normalization, this proves
	\eqref{e4.46}.
\end{proof}

\begin{remark}
	Let $(M,g)$ be closed, $V\in\mathcal C^\infty(TM)$,
	and $T>0$. For every strictly positive
	$u_T\in\mathcal C^\infty(M)$, problem \eqref{eq:1.9}
	admits a unique solution
	$u\in\mathcal C^\infty(M\times[0,T])$ satisfying 
	$0<\min_Mu_T\leq u(x,t)\leq\max_Mu_T,$
	$(x,t)\in M\times[0,T].$
	Indeed, the time reversal $v(x,\tau)=u(x,T-\tau)$
	transforms \eqref{eq:1.9} into
	$\partial_\tau v=\Delta_Vv$, with $v(\cdot,0)=u_T$.
	Existence, uniqueness, and smoothness follow from
	linear parabolic theory
	\cite[Proposition 2.3 and Remark 2.4]{CM},
	and the bounds follow from the maximum principle.
\end{remark}

	\bigskip
\address{{\it Ha Tuan Dung}\\
Shanghai Institute for Mathematics  \\
and Interdisciplinary Sciences (SIMIS),\\
 Shanghai, China \\
and Research Institute of Intelligent\\ Complex
Systems, Fudan University,\\ Shanghai,
China
}
{hatuandung@simis.cn or\\ hatuandung@hpu2.edu.vn}
\address{{\it Yi Li}\\
	Shanghai Institute for Mathematics  \\
and Interdisciplinary Sciences (SIMIS),\\ Shanghai, China \\
and Center for Mathematics and\\
Interdisciplinary Sciences,  \\
Fudan
University, Shanghai, China
}
{ yilicms@simis.cn}
	%

%

\end{document}